%% file: main.tex
\documentclass{article}

\usepackage[
    linktocpage=true,
    colorlinks=true,
    linkcolor=cyan,   % page numbers in ToC
    citecolor=cyan,   % bibliography links
    urlcolor=cyan     % URLs
]{hyperref}

\usepackage{graphicx} % Required for inserting images
\usepackage{amsmath}
\usepackage{hyperref}
\usepackage{upgreek}
\usepackage{xcolor}

\usepackage[toc,title,page]{appendix}
\usepackage{cleveref}
\usepackage[caption=false]{subfig}
\usepackage[margin=3cm]{geometry}
\usepackage{todonotes}
\usepackage{amsthm}
\usepackage{amssymb}
\usepackage{enumitem}
\usepackage{relsize}
\setlist[enumerate]{nosep}

\usepackage[backend=biber,
            style=numeric-comp,
            sortcites=true,
            giveninits=true, % Use initials in bibliography
            useprefix=true,
            sorting=nyt,
            maxbibnames=9]{biblatex}
\DeclareFieldFormat{titlecase}{#1}
\newtheorem{prop}{Proposition}
\newtheorem{thm}{Theorem}
\newtheorem{definition}{Definition}
\newtheorem{remark}{Remark}
\newtheorem{lem}{Lemma}

\numberwithin{prop}{section}
\numberwithin{lem}{section}
\numberwithin{thm}{section}
\numberwithin{definition}{section}
\numberwithin{cor}{section}
\numberwithin{remark}{section}

\input{macro}

\title{Layered Dissipative Neural Fields:\\
Bounding the Dimension of the Population Activity Manifold with Infinitely Many Neurons}

\author{Matthias Rakotomalala\thanks{Technical University of Munich,
Chair of Analysis and Modelling,  Boltzmannstr. 3,
85748 Garching b. M\"unchen, Germany.}, Johannes Zimmer\footnotemark[1]}

\begin{document}

\maketitle

\begin{abstract}
    We consider a class of neural field models describing the macroscopic activity of an infinite population of neurons organized into finitely many stacked layers, written as a system of semi-linear higher-order parabolic equations. For this class of biologically inspired equations, we show that the higher-order dissipation, modeling \textit{gap junction} activity regulation, induces a spectral gap ensuring that the associated semigroup possesses an \textit{inertial manifold}, that is, a finite-dimensional manifold attracting all orbits of the infinite-dimensional system of neural activity. Our main result is an upper bound on the dimension of this manifold, giving its explicit scaling in the dissipation order and strength, the leaking rate, and the norm of the connection operator. We interpret this as a step toward the mathematical modeling of the empirically motivated concept of \textit{neural manifold}, namely the idea that the collective activity of a large neural population lies on a low-dimensional manifold. For this class of equations, we further prove universal approximation properties, namely stationary pattern expressivity in terms of the input and input-dependent dynamical expressivity over finite time horizons. Finally, we provide sufficient conditions for the existence of traveling waves and nontrivial stationary solutions.
    %In this paper, we consider a generalized dissipative neural field model, describing the macroscopic activity of an infinite population of neurons organized into finitely many stacked layers. The model we consider is a system of semi-linear higher-order parabolic equations. For this class of biologically inspired equations, we give sufficient conditions for the associated semigroup to possess an inertial manifold; that is, a finite-dimensional manifold that attracts all orbits of the infinite-dimensional system of neural activity. This result is interpreted as a step toward the mathematical modeling of the empirically motivated concept of \textit{neural manifold}, namely the idea that the collective activity of a large neural population lies on a low-dimensional manifold. Furthermore, we provide a explicit upper-bound on the dimension that depending on the norm of the connection operator. For this class of equations, we prove universal approximation properties and establish nontrivial qualitative behavior of the equation. Specifically, the existence of nontrivial stationary solutions and traveling waves is shown.
\end{abstract}

\textbf{Keywords:}  {\small partial differential equations, neural fields, neural manifolds, attractors, inertial manifolds.}

\tableofcontents

\section{Introduction}

\input{Intro}

\section{Notation and Preliminaries}
\label{subsec:FunctionFramework}
\input{preliminaries}

\section{Derivation of the Layered Dissipative Neural Field Equation}
\label{sec:Derivation}
\input{Derivation}

\section{Statement of the Results}
\label{sec:MainRestult}
\input{Mainresults}

\section{Generalized Layered Dissipative Neural Fields}
\label{sec:GeneralizedDNF}
\input{GeneralizedLDNF}

\section{Proofs of the Results}
\label{sec:Proofs}
\input{ProofsAbstractResult}

\section*{References}
\addcontentsline{toc}{section}{References}
\printbibliography[heading=none]
    
\end{document}

%% file: macro.tex
\newcommand{\dR}{\mathbb{R}}
\newcommand{\dZ}{\mathbb{Z}}
\newcommand{\dN}{\mathbb{N}}
\newcommand{\dC}{\mathbb{C}}
\newcommand{\Torus}{\mathbb{T}}

\newcommand{\dH}{\mathbb{H}}
\newcommand{\dL}{\mathbb{L}}

\newcommand{\dd}{\mathrm{d}}
\DeclareMathOperator{\dist}{dist}
\DeclareMathOperator{\osc}{osc}
\DeclareMathOperator{\coker}{coker}

\DeclareMathOperator{\argmax}{argmax}
\DeclareMathOperator{\Rank}{Rank}
\DeclareMathOperator{\supp}{supp}
\newcommand{\Indic}{\mathbf{1}}

\newcommand{\Real}{\mathrm{Re}}
\newcommand{\Imag}{\mathrm{Im}}

\newcommand{\In}{\mathrm{in}}
\newcommand{\I}{\mathcal{I}}

\renewcommand{\P}{\mathrm{P}}
\newcommand{\Q}{\mathrm{Q}}

\theoremstyle{definition}
\newtheorem{assumption}{Assumption}

\newcommand{\vnu}{\boldsymbol{\upnu}}
\newcommand{\vn}{\mathbf{n}}
\newcommand{\vtau}{\boldsymbol{\uptau}}

\newcommand{\vs}{\mathbf{s}}
\newcommand{\vr}{\mathbf{r}}

\newcommand{\Trps}{\mathsf{T}}
\DeclareMathOperator{\diag}{diag}

\DeclareMathOperator{\Span}{span}

\newcommand{\W}{\mathrm{W}}

%% file: Intro.tex
%%%%%%%%%%%%%%%%%%%%%%%%%%%%%%%%%%%%%%%%%%%%%%%%%%%%%%%%%%%%%%%%%%
%      - paragraph introductif du domain
%      - manifold hypothesis
%      - le model
%      - revue de litterature
%      - plan du papier
%
%

%           RECHERCHES
%
%
%       https://www.thetransmitter.org/neural-dynamics/neural-manifolds-latest-buzzword-or-pathway-to-understand-the-brain/
%
% 

% https://drive.google.com/file/d/1XGIFI4Md62CUr6rEe8ESVQZVf9PXwHrV/view
%   In neuroscience, empirical evidence from across many brain
% regions, species and behaviors (see review sections below) shows that
% the latent dynamics of neural populations consistently explore only a
% subset of possible states. T
%

    % Neural manifolds are likely to emerge from the coordinated activity of interacting neurons within (and probably across) brain regions
    % (Fig. 1a).

%LINK with Recurrent neural network BOX3

%
% Wikipedia
        
        % How is information in the brain processed by the collective dynamics of large neuronal circuits?
        % What level of simplification is suitable for a description of information processing in the brain?
        % What is the neural code?
        
        % % 

%complex coordination of neural population
%
%
%
%.  CYTOARCHITECHTURE MULTIPLE LAYER OR 
%
%
%

%%%%%%%%%%%%%%%%%%%%%%%%%%%%%%%%%%%%%%%%%%%%%%%%%%%%%%%%%%%%%%%%%%

Two of the most challenging open questions in computational neuroscience are \textit{how information is encoded in the brain}, and \textit{how computation emerges from the coordinated dynamics of large neuronal populations}. Recent studies approach these questions through the prism of \textit{neural manifolds}: empirical data show that the activities of large neuronal populations are constrained to low-dimensional manifolds of population-wide patterns of activity. In this paper, through mathematical modeling and analysis, we present a potential mechanism providing this collective behavior. Namely, we consider a formal mean field model governed by a partial differential equation, representing a continuum of neural dynamics when the number of neurons tends to infinity, for which we give sufficient conditions for the system to possess an \textit{inertial manifold}, that is, a finite-dimensional manifold that attracts the population activity asymptotically in time.

In neuroscience, technology now makes it possible to record simultaneously the individual spikes of hundreds or thousands of neurons from the same brain region. Advanced statistical methods~\cite{perich2025neural} are then used to empirically estimate manifolds in latent spaces to which these signals are confined. This was performed across multiple species, including invertebrates, fish, and mammals, humans included. This led neuroscientists~\cite{sadtler2014neural,gallego2017neural} to introduce the concept of \textit{neural manifolds}, hypothesizing that the structure of coordinated population neural activity is constrained to manifolds with dimensions much smaller than the number of neurons in the population. This is consistent with theories stating that information is encoded at the population scale in the brain~\cite{averbeck2006neural}. Indeed, many brain regions contain thousands of neurons while processing or representing variables that are intrinsically low-dimensional, such as the orientation of a body part or motor commands~\cite{gallego2020long, chaudhuri2019intrinsic}. If these neuronal populations encode finite-dimensional inputs and/or outputs, then their large size necessarily implies a high degree of redundancy. Consequently, the relevant information is naturally expected to be represented collectively at the population level, constraining global activity patterns to a low-dimensional manifold whose dimension is comparable to that of the encoded variables.

In this paper, we derive and study the following model
\begin{equation}
    \label{sys:AbstractModel}
    \left\{\begin{array}{@{}l@{\;}l@{}}
        \partial_t u_1 + (\tau_1 + \nu_1 (-\Delta)^{n_1}) u_1 =& \sum_{m=1}^d \W_{1m} \sigma\left(\alpha_m u_m + \beta_m\right) + \W^\In_{1} \I,
        \\
        \hspace{3em} \vdots \hspace{3em} & \hspace{3em} \vdots
        \\
        \partial_t u_\ell + (\tau_\ell + \nu_\ell (-\Delta)^{n_\ell}) u_\ell =& \sum_{m=1}^d \W_{\ell m} \sigma\left(\alpha_m u_m + \beta_m\right) + \W^\In_{\ell} \I,
        \\
        \hspace{3em} \vdots \hspace{3em} & \hspace{3em} \vdots
        \\
        \partial_t u_d + (\tau_d + \nu_d (-\Delta)^{n_d}) u_d =& \sum_{m=1}^d \W_{dm} \sigma\left(\alpha_m u_m + \beta_m\right) + \W^\In_{d} \I
    \end{array} 
    \right. \hspace{2em} \text{ in } (0,+\infty) \times \Omega.
\end{equation}

Here, $u_\ell (t, x) \in \dR$ represents the neural activity at time $t \in [0, +\infty)$ of an idealized neuron located at position $x \in \Omega \subset \mathbb{R}^2$ within layer $\ell \in \{1, \dots, d\}$. The collection of fields $U = (u_1, \dots, u_d)$, defined over the domain $\Omega$, represents the activity of a neural sheet modeled as a continuous surface, with a biologically inspired organization consisting of an infinite population of neurons arranged into finitely many layers. The time evolution $\partial_t u_\ell$ in layer $\ell$ is governed by three mechanisms. Firstly, a dissipative layer-regularizing effect $(\tau_\ell+\nu_\ell (-\Delta)^{n_\ell} )$, decomposed into the leaking rate $\tau_\ell > 0$ and a $2 n_\ell$-order diffusive operator with strength $\nu_\ell \geq 0$. The second mechanism is the interaction with the rest of the population, corresponding to the term $\sum_{m=1}^d \W_{\ell m} \sigma\left(\alpha_m u_m + \beta_m\right)$. This term is the sum over all layers of the activity output of each field $\sigma(\alpha_m u_m + \beta_m)$, each formed with a sigmoid function $\sigma \colon \dR \to \dR$, an excitability gain function $\alpha_m$ and an excitability offset function $\beta_m$; these outputs are then combined through general linear operators $\W_{\ell m}$ acting on the underlying function spaces, which encode the connections between neurons (see Figure~\ref{fig:AnalogyDiscreteContinuum}). The last mechanism, in the form of the term $\W^{\In}_\ell \I$, represents a general input field $\I$ connected to the layer by a linear operator $\W^{\In}_\ell$. The input $\I$ takes values in a Banach space $V$, naturally chosen as a function space representing, for example, the field activity of another neural sheet or the external sensory environment.

In Section~\ref{subsec:FunctionFramework}, we introduce vector notations, so that the previous system~\eqref{sys:AbstractModel} is written in the compact form
\begin{equation}
    \label{eq:AbstractModel}
    \partial_t \mathrm{U} + (\vtau +\vnu (-\Delta)^{\vn})\mathrm{U} = \W \sigma(\upalpha \mathrm{U} + \upbeta) + \W^{\In}\mathcal{I} \hspace{2em} \text{ in } (0,+\infty) \times \Omega.
\end{equation}

This model, which we refer to as \textit{Layered Dissipative Neural Fields} (\textbf{LDNF}), is the main topic of study of this paper.
We here consider it on the two-dimensional periodic torus, $\Omega = \Torus^2 := \dR^2/\dZ^2$, leaving the treatment of boundary conditions for future work.

Our model is inspired by \textit{Continuous-Time Recurrent Neural Networks} (CTRNN)~\cite{barak2017recurrent, beer2006parameter,dayan2005theoretical}, also referred to as \textit{network rate models}. These models are written as Ordinary Differential Equations for the activities of a finite population of neurons. The linear operators $\W_{\ell m}$ are a natural generalization of \textit{connection weight matrices}, when replacing a finite population of neurons with a continuous neural medium defined on a surface, since in both the discrete and the continuum setting they act as linear operators on the state space. The Amari models~\cite{amari1975homogeneous, amari1977dynamics} are obtained by taking $W_{\ell m}$ to be integral operators and $\nu_\ell = 0$. We here consider a generalized structure in order to obtain abstract results encompassing a broad class of equations, by imposing that the connection operators $W_{\ell m}$ are bounded linear operators in Sobolev spaces of distributions, provided the order of each operator is at most half the order of the dissipative operator. This is motivated by cortical computations that appear to involve differential rather than integral information; orientation selectivity in the mammalian primary visual cortex is the standard example, see Section~\ref{sec:Derivation}. The dissipative operator $\vnu (-\Delta)^\vn$, allowing for more general connection operators, is derived in the activity-field limit from the local \textit{gap junction} regulation
mechanism, an ohmic spike synchronization of the finite neuron population model.

In Theorem~\ref{thm:InertialManifold}, we establish the existence of an inertial manifold for the equation~\eqref{eq:AbstractModel}, under conditions on the parameters $\vnu, \vn, \vtau$ and a regularity assumption on the operator $\W$. The \textit{inertial manifold} is a finite-dimensional Lipschitz manifold embedded in the infinite-dimensional function space in which the equation is posed. It exponentially attracts all trajectories and is positively invariant under the dynamics, thereby characterizing the asymptotic behavior of the solutions. The dissipative structure of the equation provides a crucial spectral gap condition ensuring the existence of inertial manifolds. For a coupling assumed only to be bounded between Sobolev spaces, the higher-order dissipation
$\vn \geq 2$ required by Theorem~\ref{thm:InertialManifold} is, to the best of our knowledge, the minimal mechanism currently known to guarantee the existence of inertial manifolds. The interest of the present class is that the whole
requirement collapses into a single integer, interpreted as the range of the local activity
regulation.

This model is interpreted as a mapping assigning to each input stimulus a behavioral response of the neural sheet. We construct the analogy between the concept of \textit{neural manifolds} and  \textit{inertial manifolds} as follows: on the one hand, we have a large number of neurons with a coordinated activity leading to a population activity lying on a low-dimensional manifold. On the other hand, we have an infinite continuum of neurons with dynamics imposed by the equation~\eqref{eq:AbstractModel} leading to a behavior on a finite-dimensional manifold. Figure~\ref{fig:InertialManifoldNeuralManifoldAnalaogy} illustrates this analogy. Our model provides a mechanism at the macroscopic scale for the empirically observed finite-dimensional state space of neural population activity patterns. The main result of this paper is the following upper bound on the dimension of the inertial manifold:
\begin{equation*}
         \dim \mathcal{M} \leq (\kappa_0 d)^{1+\delta}
         \left(2^{\tfrac{n_{\max}}{2}}\frac{\|\W\|_{\dL^2\to\dH^{-\vs}}\|\sigma'\|_{\infty}\|\upalpha\|_{\dL^\infty}\bigvee \tau_{\max}\bigvee 1}{\nu_{\min}\wedge \tau_{\min} \wedge 1}\right)^\delta + \Rank(\W^{\In}),
\end{equation*}
where $\delta := 2\left(n_{\min}-2\max_{\ell = 1}^d\left(s_\ell/n_\ell\right)\right)^{-1}$ and $\kappa_0 \geq 1$ is an absolute constant independent of the parameters. The dimension of the manifold is thus controlled by the strength of the inter-layer coupling relative to the intra-layer activity regulation, the exponent $\delta$ degrading as the order of the coupling approaches half the order of the dissipation. We require the input operator $\W^\In$ to be of finite rank because $\dim \mathcal{M}$ is finite while, near a stable stationary solution, the Banach implicit function theorem already gives
\begin{equation*}
    \Rank(\W^{\In}) \leq  \dim \mathcal{M}.
\end{equation*}
Since the pioneering works of Amari~\cite{amari1975homogeneous, amari1977dynamics} and Wilson \& Cowan~\cite{wilson1972excitatory}, there has been a continuing development of Neural Field theory. This approach has been used to study pattern formation \cite{dumont2024pattern}, controlled traveling waves \cite{richardson2005control}, and models of visual cortical processing \cite{bolelli2025neural, tamekue2024reproducibility}; see \cite{burlakov2016two, pinto2001spatially, coombes2014neural, cook2022neural} for broader perspectives. More recently, stochastic extensions have been introduced to account for noise effects \cite{kuehn2018gradient, carrillo2025well}.
As in the equation considered in this paper, additional diffusion terms in neural field equations have been derived and studied in several works~\cite{spek2020neural, avitabile2024well, spek2024hopf, salasnich2019power}.

This paper is structured as follows. In Section~\ref{sec:Derivation}, we derive the model as a formal mean field limit as the number of neurons goes to infinity in a CTRNN neural lattice model, with an additional local activity regulation representing the \textit{gap junction} mechanism of \textit{electrical synapses}. Nearest-neighbor coupling yields the diffusive term
of~\cite{spek2020neural,elvin2008pattern}. Since gap junctions act at the scale of individual
spikes whereas $u_\ell$ describes population activity, and since more extensively coupled
regions are naturally modeled as more strongly coordinated, we let the regulation act over
an $n_\ell$-neighborhood. This produces the $n_\ell$-th power of the Laplacian and hence a
larger class of equations, the classical diffusive case being recovered for $n_\ell = 1$. In Section~\ref{sec:MainRestult}, we present the results obtained in this paper. Specifically, Theorem~\ref{thm:WPSemigroup} is devoted to the existence and uniqueness of the solutions of the equation~\eqref{eq:AbstractModel} together with the existence of the associated parametric semigroup. In Theorem~\ref{thm:InertialManifold}, we state the main result on the existence of an inertial manifold and the dimensional upper bound. Subsequently, in Theorem~\ref{thm:UniversalApprox} and Theorem~\ref{thm:DynUniversalApprox}, we provide \textit{universal approximation} properties for our model, proving the expressive power of this class of equations. The first is a universal approximation theorem for exponentially attractive stationary patterns parametrized by the input, interpreted as the \textit{computational} expressivity of the model; the second is concerned with the expressivity of the equation over finite time intervals, approximating general nonlinear dynamics of field activity. Theorems~\ref{thm:BifStatDiag},~\ref{thm:BifStatDiagSelfAdjPerturbation} and \ref{thm:BifHopfDiag} provide sufficient conditions on the connection operator $\W$ for ensuring the existence of nontrivial stationary solutions and traveling waves. These results are then applied in the Propositions~\ref{prop:applicationSAsym},~\ref{prop:applicationSAPert}, and~\ref{prop:applicationTraveling} to toy model equations. Section~\ref{sec:GeneralizedDNF} discusses more general dissipative operators leading to the same results, thus relaxing the specific Laplacian structure. Finally, Section~\ref{sec:Proofs} is devoted to proving the stated results. In the next section, we introduce the notations and the functional framework used throughout the paper.

\begin{figure}[p]
    \centering
    \includegraphics[width=0.9\linewidth]{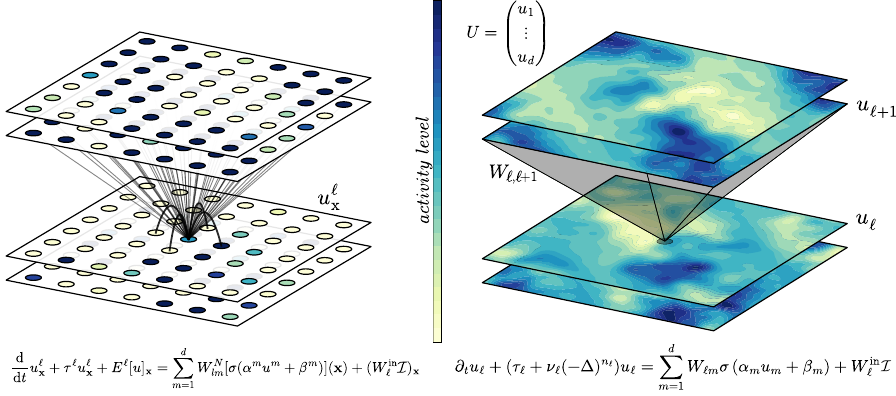}
    \caption{Representation of the connection operators between layers in both the discrete lattice model on the left and our continuous field model on the right.}
    \label{fig:AnalogyDiscreteContinuum}

    \vspace{2\baselineskip}

    \includegraphics[width=\linewidth]{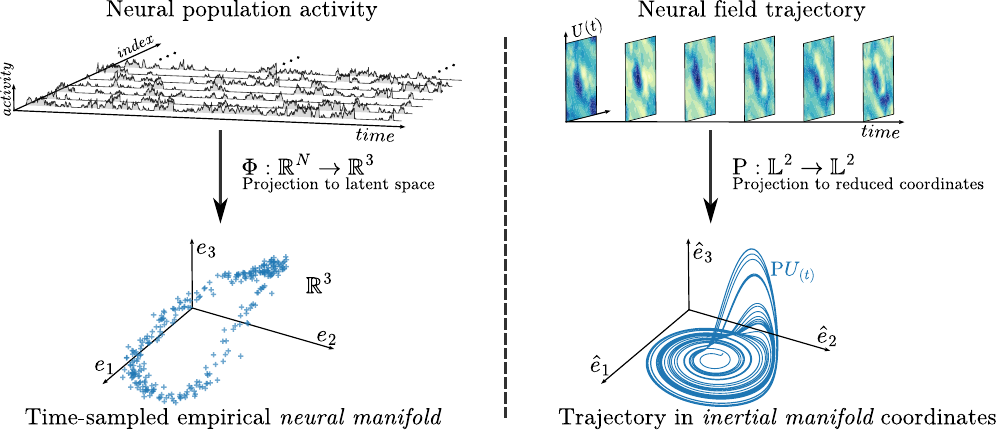}
    \caption{Illustrations of an empirical neural manifold and a neural field inertial manifold. On the left, inspired by \cite{mitchell2023neural}, we represent the projection of a neural population activity onto the neural manifold. On the right, we represent the projection of a solution to the neural field equation onto the inertial manifold coordinates, that is, a finite-dimensional subspace of $\dL^2$.}
    \label{fig:InertialManifoldNeuralManifoldAnalaogy}
\end{figure}

% \begin{figure}
%     \centering
%     \includegraphics[width=0.9\linewidth]{Figures/Figure1V1BW.pdf}
%     \caption{Representation of the connection operators between layers in both the discrete lattice model on the left and our continuous field model on the right.}
%     \label{fig:AnalogyDiscreteContinuum}
% \end{figure}

% \begin{figure}
%     \centering
%     \includegraphics[width=\linewidth]{Figures/InertialManifoldNeuralManifoldAnalogy.pdf}
%     \caption{Illustrations of an empirical neural manifold and a neural field inertial manifold. On the left, inspired by \cite{mitchell2023neural}, we represent the projection of a neural population activity onto the neural manifold. On the right, we represent the projection of a solution to the neural field equation onto the inertial manifold coordinates, that is, a finite-dimensional subspace of $\dL^2$.}
%     \label{fig:InertialManifoldNeuralManifoldAnalaogy}
% \end{figure}

%% file: preliminaries.tex
In this paper, we consider the equations on the two-dimensional torus $\Omega = \Torus^2 = \dR^2/\dZ^2$ representing a neural sheet. This mathematical assumption simplifies the analysis; nonetheless, the proofs could be adapted to a general smooth domain $\Omega \subset \dR^2$ with appropriate boundary conditions. 

For $s \in \dR$, we define the Sobolev spaces $H^s(\Omega, \dR)$ of distributions $u$ satisfying
\begin{equation*}
    \|u\|_{H^s}^2 =  \sum_{\xi \in \dZ^2} (1+4\pi^2|\xi|^2)^s |\hat{u}(\xi)|^2 < +\infty
\end{equation*}
where $\hat{u}(\xi)$ is the Fourier mode $\xi\in \dZ^2$ of $u$, defined as $\hat{u}(\xi) = \int u(x) e^{-i2\pi \xi \cdot x}\dd x$. In the following, we fix some integer $d \geq 1$, representing the number of layers of the neural sheet. For $\vs= (s_1, \cdots, s_d) \in \dR^d$, we define the product space $\dH^{\vs} = H^{s_1} \times \cdots \times  H^{s_d}$, endowed with the norm $ \|U\|_{\dH^{\vs}}^2 = \sum_{i = 1}^d \|u_i\|_{H^{s_i}}^2$, where $U = (u_1, \cdots, u_d) \in \dH^{\vs}$. We identify $\dL^2 = \dH^{\mathbf{0}} \cong L^2(\Omega, \dR^d)$ from the Parseval identity. For $\vr, \vs \in \dR^d$, we use the notation $\vr \leq \vs$ for the partial ordering on $\dR^d$, defined component-wise as: $\vr \leq \vs$ if and only if $r_\ell \leq s_\ell$ for all $\ell = 1, \cdots, d$, where $r_\ell$ and $s_\ell$ are the $\ell$-th components of $\vr$ and $\vs$.

We recall the equation~\eqref{eq:AbstractModel} studied in this paper,
\begin{equation*}
    \partial_t U + (\vtau + \vnu (-\Delta)^{\vn})U = \W \upsigma(\upalpha U + \upbeta) + \W^\In \I \; \text{ in } (0,\infty) \times \Torus^2,
\end{equation*}
and the spaces in which each term is defined For $\vn = ( n_1, \cdots, n_d ) \in \dN^d$ and $\vnu = (\nu_1, \cdots, \nu_d) \in \dR^d$, the operator $\vnu(-\Delta)^{\vn} \colon \dH^{2\vn} \to \dL^2$ is a sectorial operator with compact resolvent, whenever $\vnu > 0, \vn \geq 1$, and is defined layer-wise as $\vnu(-\Delta)^{\vn}U = (\nu_1(-\Delta)^{n_1}u_1, \cdots, \nu_d(-\Delta)^{n_d}u_d)^{\Trps}$. For $\vtau = (\uptau_1, \cdots, \uptau_d)^\Trps \in \dR^d$, the notation $\vtau U$ is defined as $\diag (\vtau) U = (\uptau_1 u_1, \cdots, \uptau_d u_d)^{\Trps}$. We suppose that $\upbeta = (\beta_1, \cdots, \beta_d)^\Trps \in \dL^2$, $\upalpha = (\alpha_1, \cdots, \alpha_d)^\Trps \in \dL^\infty$ (that is, $\alpha_\ell \in L^\infty$ for $\ell = 1, \cdots, d$), and we define $\upalpha U + \upbeta := (\alpha_1 u_1 + \beta_1, \cdots, \alpha_d u_d+\beta_d)^\Trps$. For some bounded Lipschitz function $\sigma \colon \dR \to \dR$, $\upsigma \colon \dR^d \to \dR^d$ is defined layer-wise as $\upsigma(U) = (\sigma(u_1), \cdots, \sigma(u_d))^\Trps$. Furthermore, $\W \colon \dL^2 \to \dH^{-\vs}$ is a bounded linear operator that can be decomposed as $\W = (\W_{\ell m})_{1\leq \ell,m\leq d}$, with $\W_{\ell m} \colon L^2 \to H^{-s_\ell}$. Thus explicitly, $\W U = (\sum_{m = 1}^d \W_{1m}u_m,\cdots,\allowbreak  \sum_{m = 1}^d \W_{dm}u_m )^\Trps$. We further suppose that $(V, \|\cdot\|_V)$ is a Banach space representing the space of inputs, that $\I \in V$, and that $\W^\In \colon V \to \dH^{-\vs}$ is a bounded linear operator. Finally, whenever we have a parameter $\boldsymbol{\gamma} = (\gamma_1, \cdots, \gamma_d)^\Trps \in \dR^d$, we use the notation $\gamma_{\min} = \min_{\ell = 1}^d \gamma_\ell, \gamma_{\max} = \max_{\ell = 1}^d \gamma_\ell$.

%% file: Derivation.tex
\newcommand{\bfx}{\mathbf{x}}
In this section, we derive the equation~\eqref{eq:AbstractModel} as a formal limit of a sequence of lattice neural models as the number of neurons grows to infinity. For $N \geq 2$, let $h = N^{-1}$ and denote by $\Omega_N$ the following lattice of $\Omega = \Torus^2$, 
\begin{equation*}
    \Omega_N := \{\bfx = (i h, j h), 1\leq i, j \leq N\}\subset \Omega.
\end{equation*}
We suppose that we have $N\times N\times d$ neurons in $d$ layers over the lattice $\Omega_N$ (see Figure~\ref{fig:AnalogyDiscreteContinuum}). For $1\leq \ell \leq d$, $t\geq 0$, we denote by $u^{\ell}_{\bfx}(t) \in \dR$ the activity of the neuron at position $\bfx \in \Omega_N$ in layer $\ell$ at time $t$. In the following, we omit the time dependence, and we denote by $u^\ell \in \dR^{\Omega_N} \cong \dR^{N^2}$ the activity in layer $\ell$ and $u \in \dR^{d\times \Omega_N} \cong \dR^{dN^2}$ the global activity. We then consider, with these notations, the associated continuous-time recurrent neural network (\textbf{CTRNN}), that is, the ordinary differential equation characterizing the evolution in time of the activity of each neuron
\begin{equation}
    \label{eq:LatticeModel}
    \left\{
    \begin{array}{@{}l@{\;}l@{}}
        \frac{\dd}{\dd t} u^{1}_{\bfx} + \tau_1 u^{1}_{\bfx} + E^1[u^1]_{\bfx} =& \sum_{m = 1}^d \W^N_{1m} [\sigma(\alpha^m u^m + \beta^m)](\mathbf{x}) + (\W^{\In,N}_1 \mathcal{I})_\bfx,
        \\
        \hspace{8em} \vdots  &  \hspace{8em}  \vdots
        \\
        \frac{\dd}{\dd t} u^{\ell}_{\bfx} + \tau_{\ell} u^{\ell}_{\bfx} + E^\ell[u^\ell]_{\bfx} =& \sum_{m = 1}^d \W^N_{\ell m} [\sigma(\alpha^m u^m + \beta^m)](\mathbf{x}) +( \W^{\In,N}_\ell \mathcal{I})_\bfx,
        \\
        \hspace{8em} \vdots  &  \hspace{8em}  \vdots 
        \\
        \frac{\dd}{\dd t} u^{d}_{\bfx} + \tau_d u^{d}_{\bfx} + E^d[u^d]_{\bfx} =& \sum_{m = 1}^d \W^N_{dm} [\sigma(\alpha^m u^m + \beta^m)](\mathbf{x}) + (\W^{\In,N}_d \mathcal{I})_\bfx,
    \end{array}\right. \text{ for }\bfx \in \Omega_N, t \geq 0.
\end{equation}

where $\tau_{\ell} >0$ is the leaking rate in layer $\ell$, $\mathcal{I} \in V$ is the external input, $\alpha^m , \beta^m  \in \dR^{\Omega_N}$ are respectively the excitability gain and the excitability offset in layer $m$, $\sigma \colon \dR \to \dR$ is a sigmoid function and we use the notation $\sigma(\alpha^m u^m + \beta^m)_{\bfx} := \sigma(\alpha^m_\bfx u^m_\bfx + \beta^m_\bfx)$. The linear \textit{connection operators} $\W^N_{\ell m} \colon \dR^{\Omega_N} \to \dR^{\Omega_N}$ represent the connection between the \textit{chemical} output $\sigma(\alpha^m u^m + \beta^m)$ of layer $m$ and the neurons in layer $\ell$; each of them is identifiable with a weight matrix in $\mathcal{M}_{N^2,N^2}(\dR)$, as in Artificial Neural Networks. The linear operator $\W^{\In,N}_{\ell} \colon V \to \dR^{\Omega_N}$ represents the connection of the input stimuli with the network. The function $E^\ell\colon \dR^{\Omega_N}\to\dR^{\Omega_N}$ models the intra-layer
\emph{activity regulation} mechanism arising from \emph{gap junction} coupling, that
is, from electrical synapses.

Electrical synapses are gap-junction-mediated connections providing reciprocal
pathways for ionic current between neurons. Two of their properties
shape the structure of $E^\ell$. They are \emph{homotypic}, coupling being found
predominantly between neurons of the same class, which justifies modeling the mechanism as acting
within a layer; and they are \emph{local}, formed between neighboring neurons. This mechanism varies substantially across cortical areas,
layers and cell classes \cite{fukuda2006gap}. The range
over which the mechanism acts therefore enters the model as a \emph{parameter} rather
than as a universal constant.

Functionally, electrical synapses coordinate spiking~\cite{bennett2004electrical}. This coordination depends on the shape of the action
potential, the local firing frequency and the phase. This complex mechanism acts as \textit{virtual chemical synapses} \cite{montbrio2020exact}. We do not
attempt to resolve this dependence here. What is relevant to a model posed at the
scale of an activity field is only its macroscopic imprint: a mechanism that smooths
the population activity over a neighborhood, with a power that increases as the
neighborhood widens.

We use the following biologically motivated hypotheses to model this regulation
mechanism: it is \emph{local}, and we also suppose here that it is \emph{isotropic}
and \emph{homogeneous}, so that we are motivated to consider
\begin{equation*}
    E^\ell[u^\ell]_\bfx \;=\; c^\ell_N\,g_\ell\!\left(\sum_{|i|+|j|\leq n_\ell}
    R^\ell_{ij}\bigl(u^\ell_{\bfx}-u^\ell_{\bfx+(ih,jh)}\bigr)\right),
\end{equation*}
where $c^\ell_N\geq0$, $g_\ell\colon\dR\to\dR$, $R^\ell_{ij}\in\dR$, such that in layer
$\ell$ the coupling is assumed to be between neurons in layer $\ell$ at a lattice
distance of at most $n_\ell\geq1$.

In the case $n_\ell=1$, the above hypotheses are sufficient to characterize the
conductance weights $R^\ell_{ij}$ up to a constant and 
\begin{equation*}
    E^\ell[u^\ell]_{\bfx} = c^\ell_N\,g_\ell\!\left(4u^\ell_{\bfx}-\bigl(
    u^\ell_{\bfx+(h,0)}+u^\ell_{\bfx+(-h,0)}+u^\ell_{\bfx+(0,h)}
    +u^\ell_{\bfx+(0,-h)}\bigr)\right),
\end{equation*}
with $g_\ell$ increasing and $g_\ell(0)=0$. The weights correspond to the discrete
Laplacian $(-\Delta_N)\colon\dR^{\Omega_N}\to\dR^{\Omega_N}$, as obtained in
\cite{elvin2008pattern, spek2020neural} to model gap junctions through \textit{ohmic coupling}.
%\cite{elvin2008pattern, spek2024hopf}.

In the case $n_\ell\geq2$, locality, isotropy and homogeneity no longer characterize
the conductance weights, and a modeling choice has to be made. We make the most
parsimonious one available to us. Only two features of the mechanism need to be
retained at this scale: it acts over a neighborhood whose extent varies across
cortical areas, and widening that neighborhood strengthens the averaging of the
activity. The spike-level process responsible for this is neither resolved by, nor
required at, the scale of $u^\ell$. We also note that $u^\ell$ is an \emph{activity}
field rather than a membrane potential, so that the ohmic law which singles out the
Laplacian at the level of voltage does not transfer verbatim to this scale, and the
order of the operator is not constrained to be two. We therefore retain the operator
already established in the case $n_\ell=1$ and iterate it: the $n_\ell$-th
power of the discrete Laplacian is a local, isotropic and homogeneous
operator whose regularizing power is indexed by a single integer, which we match
to the coupling range. The term we consider for a general
$n_\ell$ is thus
\begin{equation*}
    E^\ell[u^\ell]_\bfx \;=\; c^\ell_N\,g_\ell\!\left(\bigl((-\Delta_N)^{n_\ell}
    u^\ell\bigr)_\bfx\right)
    \;=\; c^\ell_N\,g_\ell\!\left(\sum_{|i|+|j|\leq n_\ell}R^\ell_{ij}\,
    u^\ell_{\bfx+(ih,jh)}\right),
\end{equation*}
with $R^\ell_{ij}$ the stencil convolution of order $n_\ell$ of the discrete
Laplacian. The appropriate scaling to obtain a nontrivial limiting model is $c^\ell_N = N^{2n_\ell}$ so that, applying the activity regulation function to a smooth function $\varphi \in C^{\infty}(\Omega, \dR)$ at lattice evaluation points, we have the limit
\begin{equation*}
    E^\ell[\pi_N \varphi]_\bfx = N^{2n_\ell}g_\ell\left(((-\Delta_N)^{n_\ell} \pi_N \varphi)_\bfx\right) \xrightarrow[N \to \infty]{} \nu_\ell (-\Delta)^{n_\ell}\varphi(\bfx),
\end{equation*}
where $\pi_N \colon C(\Omega, \dR) \to \dR^{\Omega_N}$ is the lattice projection and $\nu_\ell := (g_\ell )'(0) \geq 0$, since $g_\ell$ is increasing.

The model~\eqref{sys:AbstractModel} is obtained by assuming that the solutions of the equation~\eqref{eq:LatticeModel} converge in the limit $N \to \infty$ to a smooth field $U \in C^{1,2\vn}(\dR_+\times \Omega, \dR^d)$, the activity regulation converges as computed above, the operator $W^N_{\ell m}$ converges to a linear operator
$W_{\ell m} \colon L^2 \to H^{-s_\ell}$, $W^{\mathrm{in},N}_\ell$ to an operator
$W^{\mathrm{in}}_\ell \colon V \to H^{-s_\ell}$, $\alpha^m \in \dR^{\Omega_N}$ to
$\alpha_m \in L^\infty(\Omega)$, and $\beta^m \in \dR^{\Omega_N}$ to $\beta_m \in L^2(\Omega)$.

We conclude this section with a remark on the class of equations captured by the
model. The degenerate case $\vnu = 0$ with $\W\colon\dL^2\to\dL^2$ a convolution
operator leads to the Amari equation, so that~\eqref{sys:AbstractModel} generalizes it in two directions:
through the regulation term derived above, and through the class of operators $\W$,
which the mean field limit does not constrain to be bounded convolutions. That
unbounded limits are relevant is suggested by orientation selectivity in the
mammalian primary visual cortex, where neurons respond selectively to the orientation
of a luminance gradient in the retinal input \cite{hubel1959receptive}: a neural
sheet may therefore encode differential information about the field it receives, and
an operator carrying out such a computation need not be bounded on $\dL^2$: a first-order
differential operator maps $\dL^2$ only into $\dH^{-1}$. Our framework accommodates
general orders, $\W\colon\dL^2\to\dH^{-\vs}$, as long as $\vs\leq\vn$, and thus a larger
class of scalings of the connection weights $\W^N$.
% Additionally, according to neuronal statistical studies~ \cite{changizi2001principles}, synapses density seems to be constant across species with different nervous system sizes, whereas neuron density per unit volume scales as $(\textit{Number of neurons})^{-1/3}$ and the synapse per neuron as $(\textit{Number of neurons})^{1/3}$, these observations do not seem to coincide with Amari's model assumptions, and motivates different scalings, and thus potentially nonconvolutional operators.
% {\color{pink} Applying Theorem~\ref{prop:applicationTraveling} allows to check an expected property of $n$. As cardiac neurons are known to have dense gap junction connections, and have the property of stable 1-wave of burst activity. In Propososition~\ref{prop:CaridacBurst}, we prove that increasing $n$ isolates the bifurcation point of the 1-burst away from the other wave number traveling waves, thus making it more stable.
% }

%% file: Mainresults.tex
In this section, we present the results obtained for the equation~\eqref{eq:AbstractModel}. Namely, we establish the existence and uniqueness of the solutions together with the existence of the associated parametric semigroup, the existence of the inertial manifold under appropriate conditions, universal approximation theorems, and the existence of nontrivial stationary solutions and traveling waves.

\subsection{Dynamical System Theory and Inertial Manifolds}

\begin{thm}
\label{thm:WPSemigroup}
Suppose that $\vn = (n_1, \cdots, n_d) \in \dN^d$ with  $\vn \geq 0$, $\vnu > 0$, $\vtau > 0$, $\vn \geq \vs$, $\W \in \mathcal{L}(\dL^2, \dH^{-\vs})$, $\W^{\In} \in \mathcal{L}(V, \dH^{-\vs})$, $\upalpha \in \dL^\infty$, $\upbeta \in \dL^2$, and that $\sigma$ is Lipschitz and bounded.

Then, for any input $\mathcal{I} \in V$, and any initial condition $U_0 \in \dL^2$, there exists a unique global weak solution of the equation~\eqref{eq:AbstractModel} satisfying
\begin{equation*}
    U \in C_t (\dL^2) \cap L^2_{t,loc}(\dH^{\vn}), \text{ with } \partial_t U\in L^2_{t,loc}(\dH^{-\vn}).
\end{equation*}

Furthermore, the equation defines a strongly continuous semigroup parametrized by the input $\mathcal{I}$, that we denote by $\Psi$; $\Psi \colon \dR_+ \times \dL^2 \times V \to \dL^2$, where $\Psi(t;U_0,\mathcal{I})$ is the solution $U(t)$ of the equation~\eqref{eq:AbstractModel} at time $t$, with initial condition $U_0$ and input $\mathcal{I}$.
\end{thm}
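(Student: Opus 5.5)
Note first that the hypotheses as stated ($\vn \geq 0$, $\vnu>0$) require $n_\ell \geq 1$ for $(-\Delta)^{\vn}$ to be coercive; I will take this for granted, the case $n_\ell = 0$ with $s_\ell \le 0$ being handled by the same argument with $\dH^{\vn}$ replaced by $\dL^2$.

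The approach is a Galerkin/Lions-type argument in the Gelfand triple $\dH^{\vn} \subset \dL^2 \subset \dH^{-\vn}$, treating the nonlinearity $F(U) := \W \upsigma(\upalpha U + \upbeta) + \W^\In \I$ as a globally Lipschitz perturbation. First observe that $F \colon \dL^2 \to \dH^{-\vs} \hookrightarrow \dH^{-\vn}$ is well defined and globally Lipschitz, because $\vs \leq \vn$. Indeed, $\upsigma(\upalpha U + \upbeta) \in \dL^2$: $\sigma$ is bounded and $\Torus^2$ has finite measure. Moreover
\begin{equation*}
\|F(U)-F(V)\|_{\dH^{-\vs}} \leq \|\W\|_{\dL^2\to\dH^{-\vs}}\|\sigma'\|_\infty \|\upalpha\|_{\dL^\infty}\|U-V\|_{\dL^2}.
\end{equation*}
In addition, $F$ is bounded: $\|F(U)\|_{\dH^{-\vs}} \leq C(\|\sigma\|_\infty, \W, \W^\In, \I)$ uniformly in $U$. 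The linear part $A := \vtau + \vnu(-\Delta)^{\vn}$ is diagonal in Fourier, with symbol $\tau_\ell + \nu_\ell (4\pi^2|\xi|^2)^{n_\ell} \gtrsim \min(\tau_\ell,\nu_\ell)(1+4\pi^2|\xi|^2)^{n_\ell}$. Hence $\langle AU,U\rangle \geq c\|U\|_{\dH^{\vn}}^2$ with $c = c(\vtau,\vnu,\vn)>0$.

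For existence, I would project onto the finite Fourier modes $|\xi|\leq N$ with the orthogonal projector $P_N$. The ODE $\partial_t U_N + AU_N = P_N F(U_N)$ is globally Lipschitz, so it has a global solution. Testing with $U_N$ and using the duality $\dH^{-\vs}\times\dH^{\vs}$ gives
\begin{equation*}
\tfrac12 \tfrac{\dd}{\dd t}\|U_N\|_{\dL^2}^2 + c\|U_N\|_{\dH^{\vn}}^2 \leq \|F(U_N)\|_{\dH^{-\vs}}\|U_N\|_{\dH^{\vs}} \leq C\|U_N\|_{\dH^{\vn}}.
\end{equation*}
Young's inequality then yields uniform bounds in $L^\infty_t\dL^2 \cap L^2_t\dH^{\vn}$ on $[0,T]$. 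The equation gives bounds for $\partial_t U_N$ in $L^2_t\dH^{-\vn}$. By Aubin--Lions (compact embedding $\dH^{\vn}\subset\dL^2$ on the torus), a subsequence converges strongly in $L^2_t\dL^2$. The Lipschitz continuity of $F$ in $\dL^2$ passes to the limit in the nonlinearity. The Lions--Magenes lemma then gives $U\in C_t(\dL^2)$ and the energy identity.

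For uniqueness and continuous dependence, take the difference $W = U-V$ of two solutions with data $(U_0,\I)$ and $(V_0,\J)$. Using the energy identity, the Lipschitz bound, and Young's inequality (absorbing $\|W\|_{\dH^{\vs}}\leq\|W\|_{\dH^{\vn}}$ into the dissipation), I get
\begin{equation*}
\tfrac{\dd}{\dd t}\|W\|_{\dL^2}^2 \leq C\|W\|_{\dL^2}^2 + C\|\W^\In\|^2\|\I-\J\|_V^2.
\end{equation*}
Gronwall then yields uniqueness and Lipschitz continuity of $(U_0,\I)\mapsto \Psi(t;U_0,\I)$ in $\dL^2$. The semigroup property $\Psi(t+s;U_0,\I)=\Psi(t;\Psi(s;U_0,\I),\I)$ follows from uniqueness and time-translation invariance, since the equation is autonomous. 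Strong continuity in $t$ comes from $U\in C_t(\dL^2)$, and global existence from the a priori bound, which is valid on every $[0,T]$.

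The main obstacle is the bookkeeping of mixed orders across layers. The estimate $|\langle F(U), U\rangle| \leq \|F\|_{\dH^{-\vs}}\|U\|_{\dH^{\vs}}$ relies on componentwise duality $H^{-s_\ell}\times H^{s_\ell}$. When $s_\ell = n_\ell$ there is no room to spare, so no smallness is needed: the bound $\|U\|_{\dH^{\vs}}\leq \|U\|_{\dH^{\vn}}$ plus Young's inequality suffices because $F$ is globally bounded and Lipschitz. In the borderline case, the main care is checking that $c$ depends only on $\tau_{\min},\nu_{\min}$, and that the Lipschitz constant of $F$ is measured from $\dL^2$ and not from $\dH^{\vn}$.
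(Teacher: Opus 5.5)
Your argument is correct for $\vn \geq 1$, but it takes a different route from the paper. The paper never runs Galerkin on the nonlinear problem. It first solves the linear equation $\partial_t U + (\vtau+\vnu(-\Delta)^{\vn})U = G$ with $G\in L^2_{t,loc}(\dH^{-\vs})$ (Lemma~\ref{lem:LinearEqAbs}; Galerkin with weak limits suffices there because the problem is linear). It then obtains the nonlinear solution as a Banach fixed point, in $C([0,T];\dL^2)$, of the map sending $U$ to the linear solution with source $\W\upsigma(\upalpha U+\upbeta)+\W^\In\I$. The energy estimate gives a contraction constant of order $L_1T^{1/2}$, so $T$ is independent of the data and global existence follows by iteration. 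Continuous dependence is then the same Gr\"onwall argument on the difference that you give.

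Your route (Galerkin, uniform bounds, Aubin--Lions, limit in the nonlinearity, Lions--Magenes, separate uniqueness) is the more robust template. It would survive a nonlinearity that is only locally Lipschitz or has growth. Here, however, it spends a compactness argument that the global Lipschitz bound $\dL^2\to\dH^{-\vs}$ makes unnecessary: the contraction delivers existence and uniqueness at once.

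That difference matters for your parenthetical remark on $n_\ell=0$, which the theorem explicitly allows ($\vn\geq 0$). Coercivity is not the issue there, since $\tau_\ell+\nu_\ell$ is trivially coercive on $H^0=L^2$. What fails is the compact embedding $\dH^{\vn}\subset\subset\dL^2$ needed for Aubin--Lions. In a layer with $n_\ell=0$ you only get weak convergence of $U_N$, and because $\W$ couples all layers, that is not enough to pass to the limit in $\W\upsigma(\upalpha U_N+\upbeta)$. So ``the same argument'' does not cover this case.

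You need a compactness-free step. One option is the paper's contraction argument. Another is a monotonicity (Minty) argument: for $\lambda$ large, the map $U\mapsto(\vtau+\vnu(-\Delta)^{\vn})U-\W\upsigma(\upalpha U+\upbeta)+\lambda U$ is monotone from $\dH^{\vn}$ to $\dH^{-\vn}$. This follows from exactly the Lipschitz-plus-Young estimate you already use for uniqueness.
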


We now introduce the notion of inertial manifolds.
    
\begin{definition}
    Given a subset $B\subset V$, an inertial manifold $\mathcal{M}$ for the equation~\eqref{eq:AbstractModel} is a finite-dimensional Lipschitz submanifold of $\dL^2 \times \W^\In B$, which is positively invariant and exponentially attractive; that is, for all $(U,\I) \in \dL^2\times V$ such that $(U, \W^\In \mathcal{I}) \in \mathcal{M}$,
    \begin{equation*}
        (\Psi(t; U, \I), \W^\In \mathcal{I}) \in \mathcal{M} \text{ for all } t\geq 0
    \end{equation*}
    and there exist $\lambda>0$ and $C_B\colon \dR_+\to \dR_+$ such that
    \begin{equation*}
        \dist((\Psi\left(t; U, \I), \W^\In \mathcal{I}\right), \mathcal{M}) \leq C_B(\|U\|_{\dL^2})e^{-\lambda t}, \text{ for all } U \in \dL^2, \mathcal{I} \in B.
    \end{equation*}
\end{definition}

From the Banach implicit function theorem, around a stable stationary solution, the inertial manifold is a local Banach isomorphism of $\W^\In V$, in particular this implies that $\Rank(\W^{\In}) \leq \dim \mathcal{M}$. Thus as long as the equation~\eqref{eq:AbstractModel} admits a stable stationary solution, a necessary condition for the existence of an inertial manifold is that $\W^\In$ is a finite-rank operator. We work under this assumption in the next Theorem, which states the main result of this paper.

\begin{thm}
    \label{thm:InertialManifold} 
    We suppose that $\vnu >0$, $\vtau >0$, $\vn \geq 2$ and $\W \in \mathcal{L}(\dL^2, \dH^{-\vs})$ and $\W^{\In} \in \mathcal{L}(V, \dH^{-\vs})$ with $\Rank(\W^{\In}) < +\infty$ and $\vs \leq \vn$, with the additional assumption that $\vs < \vn$ if $n_{\min} = 2$.

    Then, for all $r_{\In}>0$, the equation~\eqref{eq:AbstractModel} admits an inertial manifold $\mathcal{M} \subset \dL^2 \times \W^\In B_V(0,r_\In)$, with the dimension upper bound
    \begin{equation}
         \dim \mathcal{M} \leq (\kappa_0 d)^{1+\delta}
         \left(2^{\tfrac{n_{\max}}{2}}\frac{\|\W\|_{\dL^2\to\dH^{-\vs}}\|\sigma'\|_{\infty}\|\upalpha\|_{\dL^\infty}\bigvee \tau_{\max}\bigvee 1}{\nu_{\min}\wedge \tau_{\min} \wedge 1}\right)^\delta + \Rank(\W^{\In}),
    \end{equation}
    where $\delta := 2\left(n_{\min}-2\max_{\ell = 1}^d\left(s_\ell/n_\ell\right)\right)^{-1}$ % {\color{pink} check $\delta := 2\left(n_{\min}-4\max_{\ell = 1}^d\left(\frac{s_\ell}{2n_\ell}\right)\right)^{-1}$}
    and $\kappa_0 \geq 1$ is an absolute constant independent of the parameters. 
\end{thm}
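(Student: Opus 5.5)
The plan is to recast \eqref{eq:AbstractModel} as an abstract evolution equation $\partial_t U + AU = F(U) + \W^\In \I$ on $H = \dL^2$, with $A := \vtau + \vnu(-\Delta)^{\vn}$ and $F(U) := \W\upsigma(\upalpha U+\upbeta)$, and to invoke the classical spectral gap theorem for inertial manifolds (Foias--Sell--Temam, Mallet-Paret--Sell, in the form with a nonlinearity losing derivatives: $F$ globally Lipschitz from $\dL^2$ into $D(A^{-\gamma})$ with constant $L$). The input is handled by enlarging the phase space. Since $\Rank(\W^\In)<\infty$, we write $\W^\In B_V(0,r_\In)\subset E := \operatorname{Ran}\W^\In$, a finite-dimensional space, and set $p := \W^\In\I$. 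We then consider the augmented system $\partial_t U + AU = F(U)+p$, $\partial_t p = 0$. The $p$-directions are neutral, so we add them to the ``slow'' subspace; this accounts for the additive term $\Rank(\W^\In)$ in the bound. Equivalently, we build a family of Lipschitz graphs $\Phi_p\colon P_N\dL^2\to Q_N\dL^2$ depending Lipschitz-continuously on $p$, using the Lyapunov--Perron fixed point with $p$ as a parameter.

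The first step is the Lipschitz estimate for $F$. Since $\sigma$ is Lipschitz and $\upalpha\in\dL^\infty$, the map $U\mapsto\upsigma(\upalpha U+\upbeta)$ is Lipschitz on $\dL^2$ with constant $\|\sigma'\|_\infty\|\upalpha\|_{\dL^\infty}$. Composing with $\W$ gives $\mathrm{Lip}(F\colon\dL^2\to\dH^{-\vs})\le L_0 := \|\W\|\|\sigma'\|_\infty\|\upalpha\|_{\dL^\infty}$. The operator $A$ is diagonal in the Fourier basis and in the layer index, with eigenvalues $\lambda_{\ell,\xi}=\tau_\ell+\nu_\ell(4\pi^2|\xi|^2)^{n_\ell}$. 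The $\dH^{-\vs}$ norm on mode $(\ell,\xi)$ is controlled by $\lambda_{\ell,\xi}^{-s_\ell/(2n_\ell)}$ times factors depending on $\nu_{\min}\wedge\tau_{\min}\wedge1$. The factor $2^{n_{\max}/2}$ comes from comparing $(1+4\pi^2|\xi|^2)^{n}$ with $1+(4\pi^2|\xi|^2)^n$. Setting $\gamma:=\max_\ell s_\ell/(2n_\ell)\le 1/2$, we obtain $\mathrm{Lip}(F\colon \dL^2\to D(A^{-\gamma}))\le L$, where $L$ is the quotient appearing in the statement (with $\tau_{\max}\vee1$ absorbing normalization).

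The second step is the spectral gap. The gap condition in the $\gamma$-version reads $\lambda_{N+1}-\lambda_N\ge \kappa L(\lambda_{N+1}^{\gamma}+\lambda_N^{\gamma})$. We order all eigenvalues of $A$ across layers. The counting function satisfies $\#\{\lambda_{\ell,\xi}\le\lambda\}\lesssim \sum_\ell (\lambda/\nu_\ell)^{1/n_\ell}\le d\,(\lambda/\nu_{\min})^{1/n_{\min}}$ for $\lambda\ge1$. The eigenvalue multiset is a union of $d$ sequences with possibly coincident values, so we cannot use consecutive eigenvalues. Instead we use a pigeonhole argument in an interval $[\Lambda,2\Lambda]$. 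That interval contains at most $\sim d\Lambda^{1/n_{\min}}$ eigenvalues (or distinct values), so some gap between consecutive distinct values has length at least $\Lambda^{1-1/n_{\min}}/(\kappa d)$. The gap condition then requires $\Lambda^{1-1/n_{\min}-2\gamma}\gtrsim \kappa dL$. Since $1/n_{\min}+2\gamma<1$ exactly under $\vn\ge2$ with $\vs<\vn$ when $n_{\min}=2$, we get $\Lambda\sim(\kappa dL)^{1/(1-1/n_{\min}-2\gamma)}$. The dimension is $N\lesssim d\Lambda^{1/n_{\min}}$, which yields the exponent $\delta$ after simplifying, i.e.\ $\delta = 2/(n_{\min}-2\max s_\ell/n_\ell)$ up to the bookkeeping of $1/n_{\min}$. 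I expect some care is needed to get precisely the paper's $\delta$; the first thing to try is refining the pigeonhole argument, using that each layer's eigenvalues grow like $|\xi|^{2n_\ell}$ in dimension $2$, where the 2D lattice count gives $\lambda^{1/n_\ell}$.

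The third step is to apply the abstract theorem, which gives a Lipschitz graph over $P_N\dL^2\times E$ that is invariant and exponentially attracting. Exponential tracking with a constant depending on $\|U\|$ follows from the cone and squeezing property together with the absorbing ball: $F$ is bounded because $\sigma$ is bounded, so there is an absorbing ball uniform in $\|p\|\le\|\W^\In\|r_\In$. The main obstacle is the second step, namely producing a gap of the right size in the merged multi-layer spectrum with explicit constants. The abstract theorem with $\gamma$ close to $1/2$ is standard but must be used in its sharp form.
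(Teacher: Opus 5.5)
Your architecture is the same as the paper's:
\begin{itemize}
\item the bound $\|A^{-\gamma}(R(U_1,p)-R(U_2,p))\|_{\dL^2}\lesssim L_1\|U_1-U_2\|_{\dL^2}$ with $\gamma=\max_\ell s_\ell/(2n_\ell)\le 1/2$;
\item a Weyl-law pigeonhole argument on $[\Lambda,2\Lambda]$ for the merged spectrum;
\item the $\gamma$-version of Temam's spectral gap theorem for each fixed $p=\W^\In\I$, with Lipschitz dependence of the graph on $p$ (the paper proves this by directly comparing the Lyapunov--Perron integrals, which is essentially your uniform-contraction argument);
\item the count $\dim\mathcal{M}=N+\Rank(\W^\In)$.
\end{itemize}

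The problem is your second step, which as written fails on a large part of the hypothesis range. Your own gap condition is $\lambda_{N+1}-\lambda_N\ge\kappa L(\lambda_{N+1}^{\gamma}+\lambda_N^{\gamma})$, and your gap has length $\Lambda^{1-1/n_{\min}}/(\kappa d)$. What you need is therefore $\Lambda^{1-1/n_{\min}-\gamma}\gtrsim\kappa^2 dL$, not $\Lambda^{1-1/n_{\min}-2\gamma}$. Your claim that $1/n_{\min}+2\gamma<1$ under the hypotheses is also false:
\begin{itemize}
\item for $n_{\min}=2$ and a first-order coupling $s=1$ (the paper's motivating example), $1/n_{\min}+2\gamma=1$;
\item for $\vs=\vn$ with $n_{\min}\ge 3$, it exceeds $1$.
\end{itemize}
In these cases your exponent is $\le 0$, so sending $\Lambda\to\infty$ produces no admissible gap. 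The correct requirement is $(1-\gamma)n_{\min}>1$. This is exactly what $\vn\ge 2$ guarantees: $\gamma\le 1/2$ in general, and $\vs<\vn$ when $n_{\min}=2$ forces $\gamma<1/2$.

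Once this is fixed, you do not need to refine the pigeonhole argument to reach the stated exponent. Solve for $\Lambda$ and insert it into $N\lesssim c\,\Lambda^{1/n_{\min}}$, where $c$ is the Weyl-law constant (of order $d$ times parameter factors). This gives $N\lesssim c^{1+\delta_0}M^{\delta_0}$ with $\delta_0=1/((1-\gamma)n_{\min}-1)$, where $M\ge 1$ collects the Lipschitz, cutoff and normalization constants. Since $2\bigl((1-\gamma)n_{\min}-1\bigr)-(n_{\min}-4\gamma)=(n_{\min}-2)(1-2\gamma)\ge 0$, you have $\delta_0\le\delta=2/(n_{\min}-4\gamma)$. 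Because the base is $\ge 1$, you may simply replace $\delta_0$ by $\delta$. The statement's $\delta$ is a convenient upper bound, not the output of a sharper count.

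Your sketch also omits the case where the longest subinterval of $[\Lambda,2\Lambda]$ is the last one. Then $\lambda_{N+1}$ may lie far above $2\Lambda$, and $\lambda_{N+1}^\gamma$ is not controlled by $\Lambda^\gamma$. The paper handles this separately: if $\lambda_{N+1}>4\Lambda$, then $\lambda_N\le 2\Lambda$ gives $\lambda_{N+1}-\lambda_N>\lambda_{N+1}/2$, so the ratio of the gap to $\lambda_{N+1}^\gamma+\lambda_N^\gamma$ is at least $\lambda_{N+1}^{1-\gamma}/4$.
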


\begin{remark}
Theorem~\ref{thm:InertialManifold} requires the diffusion to be at least of order 4, with $\vn\geq 2$; this stems from Weyl's law on the two-dimensional torus, as the construction of the inertial manifold relies on a spectral gap condition, Lemma~\ref{lem:SpectralGap}. Indeed, the Laplacian operator on the two-dimensional torus does not yield arbitrarily large spectral gaps.
\end{remark}

\begin{remark}
We prove the existence of such an inertial manifold, in the specific form of a \textit{graph manifold}. That is, denoting by $\P_N \colon \dL^2 \to \dL^2$ the projection operator onto the eigenspace spanned by the first $N$ eigenfunctions of $(\vtau + \vnu (-\Delta)^\vn)$, we prove the existence of a Lipschitz function $\Phi \colon \P_N \dL^2 \times \W^\In B_{r_\In} \to (I-\P_N)\dL^2$, such that
\begin{equation*}
    \mathcal{M} = \{(y + \Phi(y, F), F) \text{ for } y\in B_{\P_N \dL^2}(0, \rho(\|\W^\In\|_{V\to \dH^{-\vs}}r_\In)), F \in \W^\In B_{r_\In}  \},
\end{equation*}
where $\rho(\|\W^\In\|_{V\to \dH^{-\vs}}r_\In)$ is defined in Lemma~\ref{lem:Absorption}. Furthermore, the dynamics of the equation~\eqref{eq:AbstractModel} on $\mathcal{M}$ can be written as a finite-dimensional ODE.
% \begin{equation*}
%     \frac{\dd}{\dd t} \mathbf{y} + A\mathbf{y} = \P_N R(\mathbf{y} + \Phi(\mathbf{y}, \W^\In \mathcal{I}), \W^\In \mathcal{I}) \text{ in } (0,+\infty) \times B_{\P_N \dL^2}(0, \rho(\|\W^\In\|_{V\to \dH^{-\vs}}r_\In)).
% \end{equation*}
\end{remark}
    
\subsection{Universal Approximation Theorems}

   %\subsection{Static Approximation Theorem}
    Since the structure of our model is an infinite-dimensional extension of a \textbf{CTRNN}, and thus a dynamical version of Artificial Neural Networks (\textbf{ANN}), we can extend to Layered Dissipative Neural Field models both the Universal Approximation Theorem for \textbf{ANN}, see \textit{e.g.}~\cite{cybenko1989approximation}, and its dynamical counterpart for \textbf{CTRNN}, see \textit{e.g.}~\cite{funahashi1993approximation}. The following theorems require a compactness assumption in the input space. The natural choice of $V$ is a subspace representing field sensory input or the chemical output of another neural sheet. If we suppose that $V \subset L^2$, a sufficient condition for $K \subset \subset V$ to be compact is that $\|\I\|_{H^{\gamma}} \leq C$ for some $\gamma>0$ and for all $\I \in K$.

    The next theorem can be interpreted as a dynamical version of a universal approximation theorem. A computational task can be described by a function that associates an input activity with a desired pattern of cortical activity. In contrast to classical approximation results, where the output is obtained through a static input-output map, dynamical models realize this computation through the temporal evolution of their state. The theorem states that, given a continuous function mapping inputs into cortical activity and an arbitrary approximation error, there exists a dissipative neural field model whose input-driven dynamics attract the activity of one of its layers toward an approximation of the function output.
    
    \begin{thm}
        \label{thm:UniversalApprox}
        Let $\vn = (n_1, n_2)^\Trps \geq 1$, $\vnu= (\nu_1, \nu_2)^\Trps > 0$ and $\vtau = (\tau_1, \tau_2)^\Trps > 0$. For any uniformly continuous function $F\in C(V,L^2)$,  any compact subset $K\subset \subset V$ and any $\varepsilon >0$, there exist a constant $C(F, K, \vn, \vtau, \vnu, \varepsilon) >0$ and a two-layer dissipative neural field equation
        \begin{equation}
            \label{eq:UnivAppStaticLapprox}
            \begin{cases}
                \partial_t u_1 + (\tau_1 + \nu_1(-\Delta)^{n_1})u_1 &= W^\In_1 \mathcal{I},\\
                \partial_t u_2 + (\tau_2 + \nu_2(-\Delta)^{n_2})u_2 &= W_{21}\sigma(u_1 + \beta_1),
            \end{cases}
        \end{equation}
        such that any solution $(u_1(t; U^0, \mathcal{I}), u_2(t; U^0, \mathcal{I}))^\Trps$  of the system~\eqref{eq:UnivAppStaticLapprox} with initial condition $U^0 \in \dL^2$ and input $\mathcal{I} \in K$ satisfies
        \begin{equation*}
             \|u_2(t; U^0, \mathcal{I}) - F(\mathcal{I})\|_{L^2} \leq \max(\varepsilon , C(\|U^0\|_{\dL^2}\vee 1) e^{-\frac{\tau_{\min}}{4} t}).
        \end{equation*}
        In other words, on compact sets of inputs, the second layer approximates any continuous function of the input arbitrarily well, asymptotically in time.
    \end{thm}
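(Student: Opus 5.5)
The proof uses the cascade structure of~\eqref{eq:UnivAppStaticLapprox}. The first layer is linear and autonomous given $\I$, so it relaxes exponentially to a stationary pattern $\bar u_1(\I) := (\tau_1+\nu_1(-\Delta)^{n_1})^{-1}W^\In_1\I$. The second layer is also linear in $u_2$, driven by $W_{21}\sigma(u_1+\beta_1)$, so it relaxes to $\bar u_2(\I) := (\tau_2+\nu_2(-\Delta)^{n_2})^{-1}W_{21}\sigma(\bar u_1(\I)+\beta_1)$ up to an exponentially decaying error. The plan is to choose $W^\In_1$, $\beta_1$ and $W_{21}$ so that the static map $\I\mapsto \bar u_2(\I)$ approximates $F$ within $\varepsilon/2$ on $K$. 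The time estimate then follows from the two decays.

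\textbf{Step 1 (static approximation).} Since $K$ is compact and $F$ is continuous, $F(K)$ is compact in $L^2$. Fix an orthonormal basis (Fourier modes) and a projection $\Pi_M$ onto the first $M$ modes with $\sup_K\|F-\Pi_M F\|_{L^2}\le\varepsilon/8$. This reduces the problem to approximating finitely many real continuous functionals $\I\mapsto \langle F(\I),e_k\rangle$ on $K$. By compactness of $K$, choose a finite-rank continuous linear map $L\colon V\to\dR^p$ (via Hahn--Banach functionals separating a finite $\eta$-net, or an approximation of the identity on $K$) such that each functional is, up to $\varepsilon/(8M)$, a continuous function of $L\I$ on the compact set $L(K)\subset\dR^p$. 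Then apply the classical universal approximation theorem~\cite{cybenko1989approximation}: these functions are approximated by $\sum_j c_{kj}\sigma(a_j\cdot L\I+b_j)$. To realize this in the field, encode each neuron $j$ on a disjoint spatial patch $Q_j\subset\Torus^2$. Take $W^\In_1\I := (\tau_1+\nu_1(-\Delta)^{n_1})\sum_j (a_j\cdot L\I)\chi_j$, where $\chi_j$ are smooth bumps, or simply indicators if $W^\In_1$ is allowed to map into $H^{-2n_1}$. Then $\bar u_1=\sum_j(a_j\cdot L\I)\chi_j$. Set $\beta_1=\sum_j b_j\Indic_{Q_j}$ and choose the $\chi_j$ equal to $1$ on $Q_j$. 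Finally, let $W_{21}v := (\tau_2+\nu_2(-\Delta)^{n_2})\sum_{k,j} c_{kj}|Q_j|^{-1}\langle v,\Indic_{Q_j}\rangle e_k$. This is finite rank, hence bounded $L^2\to H^{-2n_2}$, and gives $\bar u_2(\I)=\sum_k\sum_j c_{kj}\sigma(a_j\cdot L\I+b_j)e_k$, which is $\varepsilon/2$-close to $F(\I)$ uniformly on $K$.

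\textbf{Step 2 (dynamics).} Both layers are governed by linear semigroups with $\|e^{-t(\tau_\ell+\nu_\ell(-\Delta)^{n_\ell})}\|\le e^{-\tau_\ell t}$. First, $\|u_1(t)-\bar u_1\|\le e^{-\tau_1 t}\|u_1^0-\bar u_1\|$. Next, $w=u_2-\bar u_2$ solves $\partial_t w+(\tau_2+\nu_2(-\Delta)^{n_2})w = W_{21}[\sigma(u_1+\beta_1)-\sigma(\bar u_1+\beta_1)]$. Since $\sigma$ is Lipschitz and $W_{21}$ composed with the resolvent is a bounded finite-rank map, Duhamel's formula gives $\|w(t)\|\le e^{-\tau_2 t}\|w(0)\|+C\,t e^{-\tau_{\min}t}\|u_1^0-\bar u_1\|$. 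This is bounded by $C(\|U^0\|\vee1)e^{-\tau_{\min}t/4}$ after absorbing the factor $t$ and using $\sup_K\|\bar u_\ell(\I)\|<\infty$. The Duhamel step needs parabolic smoothing because the forcing lies in $H^{-2n_2}$; to avoid it, choose $W_{21}$ with range in the span of finitely many smooth modes so that the forcing is in $L^2$. Combining with Step 1, $\|u_2-F(\I)\|\le\varepsilon/2+Ce^{-\tau_{\min}t/4}\le\max(\varepsilon,2Ce^{-\tau_{\min}t/4})$.

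\textbf{Main obstacle.} The delicate point is Step 1's reduction from the Banach input space $V$ to finitely many coordinates. It requires a continuous finite-rank $L$ such that $F$ factors approximately through $L$ on $K$. I would try to build it from a finite $\eta$-net $\{\I_i\}$ of $K$ and continuous functionals, using uniform continuity of $F$. If linear separation fails, I would fall back on Stone--Weierstrass applied to the algebra generated by $\{\phi\circ\ell:\ell\in V^*\}$ on $K$, which separates points of $K$; this gives density of ridge functions of continuous functionals, each then approximated by sigmoids in one variable.
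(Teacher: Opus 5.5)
Your proof is correct and has the same architecture as the paper's. You reduce input and output to finitely many coordinates, apply Cybenko's theorem, and lift the network with patch bumps via $W^{\In}_1=(\tau_1+\nu_1(-\Delta)^{n_1})\widetilde{W}^{\In}_1$ and $W_{21}=(\tau_2+\nu_2(-\Delta)^{n_2})\widetilde{W}_{21}$, so that the stationary state of the cascade is exactly the network output. The two routes differ in two sub-steps.

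\emph{Input reduction.} The paper composes $F$ with a linear projection $P_{N_0}$ onto a finite-dimensional $E_{N_0}$ spanned by points of $K$. This tacitly requires $\sup_{\I\in K}\|\I-P_{N_0}\I\|_V$ to be small. That is automatic for orthogonal projections when $V$ is a Hilbert space. In a general Banach space, however, a finite-rank linear map close to the identity on every compact set exists only under the approximation property. The paper's step also evaluates $F$ off $K$, which is why it assumes uniform continuity. Your Stone--Weierstrass fallback avoids both issues: functions of finitely many continuous functionals are dense in $C(K)$ because $V^*$ separates points, and this uses only continuity of $F$ on $K$. So keep that option. The $\eta$-net option also works, but only with \emph{norming} functionals, $\|\ell_{ij}\|=1$ and $\ell_{ij}(\I_i-\I_j)=\|\I_i-\I_j\|_V$, which make $L$ isometric on $K$ up to $O(\eta)$; mere separation of the net is not quantitative as $\eta\to0$. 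Drop the ``approximation of the identity on $K$'' option, since it has exactly the paper's issue.

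\emph{Time decay.} The paper applies the energy inequality of Lemma~\ref{lem:LinearEqAbs} to the weighted vector $(u_1-u_1^\infty,\sqrt{\kappa}(u_2-u_2^\infty))$, with $\kappa$ small enough to absorb the coupling. This is where the rate $\tau_{\min}/4$ comes from, and it would tolerate $W_{21}$ mapping only into $H^{-s}$ with $s\le n_2$. Your Duhamel computation on the triangular system is more explicit and gives the sharper bound $e^{-\tau_2 t}\|w(0)\|_{L^2}+C\,t\,e^{-\tau_{\min}t}$. Since your $W_{21}$ has range in finitely many smooth modes, it is bounded on $L^2$, so your parabolic-smoothing caveat is moot.

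Finally, use the smooth bumps rather than indicators in $W^{\In}_1$. Indicators would take $W^{\In}_1$ outside the class $\vs\le\vn$ covered by Theorem~\ref{thm:WPSemigroup}.
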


    \begin{remark}
        In this case, the inertial manifold always exists, without the additional assumption that $\vn \geq 2$, and is given as a function of the input that can be chosen arbitrarily close to the target function on compact subsets. That is, there exists a Lipschitz bounded function $G^\varepsilon \colon V \to L^2\times L^2$, with $G^\varepsilon(\I) := (G^\varepsilon_1(\I), G^\varepsilon_2(\I))$ such that $\sup_{\I \in K} \|G_2^\varepsilon (\I) - F(\I) \|_{L^2} \leq \varepsilon$ and $\mathcal{M} := \{(G^\varepsilon(\I), (W^\In_1\I, 0)), \I \in V\}$ is the global attractor of the equation~\eqref{eq:UnivAppDyn2Lapprox}, in particular it is an inertial manifold for~\eqref{eq:UnivAppDyn2Lapprox}.
        
    \end{remark}
    
    This second theorem is concerned with dynamical aspects of neural field activity, that is, with how the system reacts to the input stimulus. We here prove that layered dissipative neural field equations approximate, arbitrarily well on finite time intervals, semi-linear parametrized field equations of the form~\eqref{eq:GeneralEquationUnivAppDyn}.
    
    \begin{thm}
        \label{thm:DynUniversalApprox}
        Suppose that $n \geq 1, \nu >0, \tau >0$ and $s<n$, and consider the well-posed scalar equation,
        \begin{equation}
            \label{eq:GeneralEquationUnivAppDyn}
            \partial_t u + (\tau + \nu (-\Delta)^n)u = R(u,\mathcal{I}) \text{ in } (0,+\infty) \times \Omega
        \end{equation}
        with $R : L^2 \times V \to H^{-s}$ Lipschitz continuous and bounded.
        
        Then, for any compact subset $K$ of $V$, for all $T>0, \varepsilon >0, \gamma > 0, \rho>0$, there exists a two-layer dissipative neural field equation
        \begin{equation}
            \label{eq:UnivAppDyn2Lapprox}
            \begin{cases}
                \partial_t u_1 + (\tau + \nu (-\Delta)^n)u_1 &= W_{11}\sigma(u_1 + \beta_1)  + W_{12}\sigma(\alpha_2 u_2) + W^{\In}_1 \mathcal{I},\\
                \partial_t u_2 + (\tau + \nu (-\Delta)^n)u_2 &= W_{21}\sigma(u_1 + \beta_1),
            \end{cases}
        \end{equation}
        
        with the following property. For all $u^0 \in B_{H^{\gamma}}(0,\rho)$ and $\I\in K$, there exists an initial condition $U^0 = (u_{1}^0, u_{2}^0)^\Trps$ such that the corresponding solutions satisfy
        \begin{equation*}
            \sup_{t \in [0,T]}\|u(t) - u_2(t)\|_{L^2} \leq \varepsilon,
        \end{equation*}
        where $u(t)$ denotes the solution of the equation~\eqref{eq:GeneralEquationUnivAppDyn} with initial condition $u^0$, and $U(t) := (u_1(t), u_2(t))^\Trps$ the solution of the system~\eqref{eq:UnivAppDyn2Lapprox} with initial condition $U^0$.
        
    \end{thm}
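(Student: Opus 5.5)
The proof is a constructive approximation in two stages. We first approximate the nonlinearity $R$ by a one-hidden-layer "neural network" in function space. We then realize that network with the two-layer field system, using the second layer as a copy of $u$ and the first layer as a fast hidden layer.

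\textbf{Step 1 (a priori compactness).} For $u^0 \in B_{H^\gamma}(0,\rho)$ and $\I\in K$, the solutions of~\eqref{eq:GeneralEquationUnivAppDyn} on $[0,T]$ form a precompact set $\mathcal{K}\subset L^2$. To see this, write Duhamel's formula
\begin{equation*}
u(t)=e^{-tA}u^0+\int_0^t e^{-(t-r)A}R(u(r),\I)\,\dd r, \qquad A=\tau+\nu(-\Delta)^n .
\end{equation*}
Since $R$ is bounded in $H^{-s}$ and $s<n$, the integral term is bounded in $H^{\gamma'}$ for some $\gamma'>0$, uniformly in $t$ and $\I$. The term $e^{-tA}u^0$ stays bounded in $H^{\gamma}$. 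Rellich's theorem on $\Torus^2$ then gives compactness of $\mathcal{K}$, and $\mathcal{K}\times K$ is compact in $L^2\times V$.

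\textbf{Step 2 (static approximation of $R$).} On the compact set $\mathcal{K}\times K$ we approximate $R$ uniformly in $H^{-s}$, within $\eta$, by a map of the form
\begin{equation*}
\tilde R(u,\I)=W_{21}'\,\sigma\bigl(P u + Q\I+\beta\bigr).
\end{equation*}
Here $P$ and $Q$ are finite-rank operators that project onto finitely many Fourier modes and read out the input through finitely many linear functionals. This is the Cybenko-type argument already used in Theorem~\ref{thm:UniversalApprox}: we reduce to finite dimensions by Galerkin truncation using compactness, then apply the classical universal approximation theorem coordinatewise.

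To make the construction fit the prescribed form~\eqref{eq:UnivAppDyn2Lapprox}, the roles of the layers are as follows. Layer $u_2$ shadows $u$. Layer $u_1$ is a stiff hidden layer. Its linear part is slaved, through a large gain inside $W_{12}\sigma(\alpha_2 u_2)$, to $P u_2 + Q\I$, so that $W_{21}\sigma(u_1+\beta_1)\approx \tilde R(u_2,\I)$. The term $W_{12}\sigma(\alpha_2 u_2)$ with small $\alpha_2$ is used to linearize: $\sigma(\alpha_2 u_2)\approx \sigma(0)+\sigma'(0)\alpha_2 u_2$, and dividing by $\alpha_2$ inside $W_{12}$ recovers $u_2$ linearly on bounded sets. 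Since the system does not provide a separate relaxation rate for $u_1$, we rescale instead. We take $W_{12}$, $W^\In_1$ and $W_{11}$ large, with $W_{11}$ chosen to cancel $\tau+\nu(-\Delta)^n$ on the finitely many modes involved, so that $u_1$ tracks its target quasi-statically. The initial condition $U^0=(u_1^0,u_2^0)$ is chosen with $u_2^0=u^0$ and $u_1^0$ equal to the slaved value, which avoids an initial layer.

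\textbf{Step 3 (Gronwall).} Write $e=u-u_2$. It satisfies the same linear equation with forcing
\begin{equation*}
R(u,\I)-\tilde R(u_2,\I)+\bigl(\tilde R(u_2,\I)-W_{21}\sigma(u_1+\beta_1)\bigr).
\end{equation*}
The first difference is at most $\eta+\mathrm{Lip}(R)\|e\|$ as long as $u_2$ stays near $\mathcal{K}$. The second difference is the slaving error. The energy estimate in $L^2$, using $s<n$ to absorb the $H^{-s}$ forcing against the $H^{n}$ dissipation, followed by Gronwall on $[0,T]$, gives the bound $\varepsilon$ once $\eta$ and the slaving error are small. A bootstrap argument keeps $u_2$ inside a neighbourhood of $\mathcal{K}$.

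The main obstacle is Step 2 combined with the slaving. The hidden layer is governed by the same operator $\tau+\nu(-\Delta)^n$ as $u$, so making $u_1$ faithfully reproduce the finitely many features $Pu_2+Q\I$ with uniformly small error over $[0,T]$ requires the careful choice of large gains and the cancellation by $W_{11}$. This has to be done while keeping every operator bounded $L^2\to H^{-s}$. I would first try to handle this with a finite-mode Galerkin construction: all the big operators act on finitely many Fourier modes, where the error analysis reduces to a singularly perturbed finite-dimensional ODE.
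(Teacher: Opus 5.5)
There is a genuine gap at the step you yourself flag as the main obstacle. The compactness, the Galerkin/Cybenko reduction with bump lifting, and the final Gronwall are all reasonable. What is missing is a working mechanism for $W_{21}\sigma(u_1+\beta_1)\approx\tilde R(u_2,\mathcal{I})$, and the stiff hidden layer you describe does not provide one.

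\textbf{Why the stiff hidden layer fails.} In the prescribed form, $u_1$ has no tunable linear self-damping:
\begin{itemize}
\item its linear part is the fixed operator $A=\tau+\nu(-\Delta)^n$;
\item its only self-interaction is $W_{11}\sigma(u_1+\beta_1)$, which is nonlinear in $u_1$, has unit gain, and has $\beta_1$ already committed to the Cybenko biases.
\end{itemize}
So $W_{11}$ cannot ``cancel'' $A$ on the relevant modes, since that would require $W_{11}\sigma(u_1+\beta_1)=Au_1$ there. Even if it could, removing the damping and enlarging $W_{12}$, $W^{\mathrm{in}}_1$ makes $u_1$ integrate its forcing rather than relax to $Pu_2+Q\mathcal{I}$. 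As written, nothing produces a fast time scale, so there is no singularly perturbed ODE to analyse. A further problem is that $A$ does not preserve the span of the bump functions used for the lifting, so $u_1$ leaks out of the subspace on which the network is read.

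\textbf{The missing idea: make the slaving exact.} No time-scale separation is needed.
\begin{enumerate}
\item First Galerkin-truncate, so the shadowing layer lives in $E_\Lambda=\mathrm{span}\{e_n\}$, spanned by eigenfunctions of $A$.
\item Lift the network as $R_{N,M}(u,\mathcal{I})=W_{21}\sigma(W^0_{11}u+\widetilde W^{\mathrm{in}}_1\mathcal{I}+\beta_1)$.
\item Define $\tilde u_1:=W^0_{11}\tilde u_2+\widetilde W^{\mathrm{in}}_1\mathcal{I}$ and differentiate in time using the $\tilde u_2$-equation. This gives
\begin{equation*}
\partial_t\tilde u_1+A\tilde u_1=W^0_{11}W_{21}\sigma(\tilde u_1+\beta_1)+(AW^0_{11}-W^0_{11}A)\tilde u_2+A\widetilde W^{\mathrm{in}}_1\mathcal{I}.
\end{equation*}
\end{enumerate}
This already has the required structure, with $W_{11}=W^0_{11}W_{21}$ and $W^{\mathrm{in}}_1=A\widetilde W^{\mathrm{in}}_1$. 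The term $A\tilde u_1$ is not cancelled; it is compensated through $\tilde u_2$ and the input. The commutator equals $\sum_{n,m}(A\varphi_m-\lambda_n\varphi_m)B^1_{nm}\langle e_n,\cdot\rangle$, which is bounded on $L^2$ because the bumps are smooth and the $e_n$ are eigenfunctions. With the consistent initial datum $u_1(0)=W^0_{11}u_2(0)+\widetilde W^{\mathrm{in}}_1\mathcal{I}$, uniqueness makes the identity hold for all times.

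\textbf{Where your linearization fits.} Your small-$\alpha_2$ linearization is the right tool, but it should be applied only to the commutator term, not used as a large gain feeding a stiff layer. Take $W_{12}=h(AW^0_{11}-W^0_{11}A)$ and $\alpha_2=1/h$. This requires:
\begin{itemize}
\item $\sigma(0)=0$ and $\sigma'(0)=1$, since otherwise $h\sigma(0)$ blows up;
\item an $L^\infty$ bound on $u_2$, which holds because $u_2\in E_\Lambda$ is finite-dimensional.
\end{itemize}
The remaining error is $O(1/h)$ and is handled by Gronwall.

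Doing the Galerkin truncation first also removes the need for your bootstrap. The Cybenko approximation is only needed on a ball of the finite-dimensional $E_\Lambda$. The comparison with the Galerkin system then uses the Lipschitz constant of $P_NR$, which does not depend on the network.
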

    
\subsection{Spontaneous Cortical Activity}
\label{subsec:SCA}
In this section, we establish nontrivial qualitative properties of the model in the absence of input stimulus, that is, for $\I = 0$. Namely, we derive sufficient conditions under which the model admits nonconstant stationary solutions and traveling waves, demonstrating its ability to reproduce essential dynamical features of cortical activity. These behaviors are particularly relevant as stationary patterns represent persistent states of neural activity, while traveling waves capture the propagation of activity across cortical regions. We consider one-layer equations over one-dimensional fields; such solutions can then be extended to be constant in the second spatial dimension to obtain solutions over two-dimensional fields. We consider the spaces $H^{s}(2\pi \Torus)$ for $s\in \dR$ and $L^2(2\pi \Torus) = H^0(2\pi \Torus)$, which we denote by $H^s$ and $L^2$. In the analysis below, we leverage the Fourier decomposition of the operators to study the kernel of the linearized operator at bifurcation points. We denote by $e_k := (x\mapsto e^{ikx})$, for $k \in \dZ$, the elements of the orthogonal basis of $L^2$ in Fourier modes, and by $H^s_{\dC} = H^s + i H^s$ the complex extension of the space $H^s$. For $n \geq 1$, we use the following standing assumptions on the linear operator $W$
\begin{assumption}
    \label{as:SCA-AssW}
    $W$ is a \emph{diagonal} operator of order $s \leq n$, that is, $W e_k = \hat{W}_k e_k$ for some $(\hat{W}_k)_{k\in \dZ}$ with $(\langle k\rangle^{-s}|\hat{W}_k|)_{k\in \dZ} \in \ell^{\infty}(\dC)$, equivalently $W \in \mathcal{L}(H^r, H^{r-s})$ for all $r \in \dR$.
\end{assumption}
We note that $ A^n_{\nu, \tau} := \nu(-\Delta)^n + \tau : H^{2n} \to L^2$ is a diagonal operator of order $2n$, with $\hat{A}^n_{\nu, \tau} = \langle k\rangle^n_{\nu, \tau} := \nu |k|^{2n} + \tau $. Let $F: H^{2n} \times \dR \to L^2$ be the function defined as
\begin{equation}
    \label{def:SCAFunctional}
    F(u,\lambda) = (\nu(-\Delta)^n + \tau)u  - \lambda \W \sigma (u),
\end{equation}
such that the equation~\eqref{eq:AbstractModel} reduces in this setting to
\begin{equation}
    \label{eq:dtFabstract1d}
    \partial_t u + F(u,\lambda) = 0 \text{ in } (0,\infty)\times \Torus.
\end{equation}
The parameter $\lambda \in \dR$ modulates the nonlinear coupling; mathematically, it serves as a bifurcation parameter. We first consider the problem of stationary solutions, that is, find $(u, \lambda) \in H^{2n}\times \dR$ such that
\begin{equation}
    \label{eq:StationnarySol1D1L}
    F(u,\lambda) = (\nu(-\Delta)^n + \tau)u  - \lambda \W \sigma (u) = 0.
\end{equation}

On the other hand, we consider the problem of traveling wave solutions of the equation~\eqref{eq:dtFabstract1d}, that is, to find $(u,\lambda) \in (C^1_{per, \kappa}(L^2)\cap C_{per, \kappa}(H^{2n})) \times \dR$, where $C_{per, \kappa}(X)$ is the space of $2\pi/\kappa$-periodic continuous functions from $\dR$ to a Banach space $X$.

Additionally, for $W$ satisfying~\eqref{as:SCA-AssW}, and given $A^n_{\nu, \tau}$, we define $\mathrm{c}_\ell$ as
\begin{equation}
    \label{def:SCAcell}
    \mathrm{c}_\ell:=  \frac{\hat{W}_\ell}{\langle \ell\rangle^n_{\nu,\tau}}.
\end{equation}
We shall also use the following two assumptions, on the operator $W$ and on the nonlinearity $\sigma$ respectively.
\begin{assumption}
     \label{as:SCA-AssW2}
     $W$ is such that $W e_0 = 0$.
\end{assumption}
Assumption~\eqref{as:SCA-AssW2} ensures that $u(t,x) \equiv 0$ is a solution of the equation~\eqref{eq:dtFabstract1d}.
\begin{assumption}
     \label{as:SCA-AssSigma}
    $\sigma$ is analytic in a neighborhood of $0$ and is normalized as $\sigma'(0) = 1$.
\end{assumption}
Through the parameter $\lambda \in \dR$, we can always rescale the first derivative of $\sigma$ to satisfy~\eqref{as:SCA-AssSigma}. We express its Taylor expansion around zero as $\sigma(x) = \sigma_0 + x + \frac{1}{2}\sigma_2 x^2 + \cdots + \frac{1}{k!}\sigma_k x^k + o(x^k)$.

This first theorem establishes the existence of bifurcating solution curves for the equation~\eqref{eq:StationnarySol1D1L} and also provides a result on their local dynamical stability.

\begin{thm}
    \label{thm:BifStatDiag}
    Let $n \geq 1$, $\nu > 0 $, $\tau >0$. Assume~\eqref{as:SCA-AssW}, \eqref{as:SCA-AssW2}, \eqref{as:SCA-AssSigma} and that $W$ commutes with the reflection operator. Let $k\in \dN$ be an \textbf{exceptional index}, that is, $\mathrm{c}_k$ is nonzero and $\mathrm{c}_k \neq \mathrm{c}_\ell$ for all $\ell \neq k$.
    Then, there exists a family of bifurcation curves $(s, \theta)\mapsto (u^k(\theta, s), \lambda^k(s)) \in H^{2n}\times \dR$, solutions of the stationary equation~\eqref{eq:StationnarySol1D1L}, such that  $\lambda^k(s) = \lambda^k_0 + \tfrac{s^2}{2} \lambda^k_2+ o(s^2)$, and
    \begin{equation*}
        u^k_{(\theta, s)}(x) =  \frac{s}{2\sqrt{\pi}} \left(e^{i(\theta+k x)}+e^{-i(\theta+k x)}\right) + o(s),
    \end{equation*}
    where
    \begin{equation*}
        \lambda^k_0 = \frac{1}{\mathrm{c}_k }, \; \lambda^k_2 = - \frac{1}{4\pi}\left(\lambda^k_0 \sigma_3+ (\lambda^k_0 \sigma_2)^2\frac{\hat{W}_{2k}}{\hat{A}_{2k} - \lambda^k_0 \hat{W}_{2k} }\right).
    \end{equation*}
    
    If $k$ is exceptional and furthermore $k = \argmax_{\ell \in \dN} \mathrm{c}_\ell$, then the nonlinear stability of the bifurcating solution is determined by $\lambda_2^k$, that is, if $\lambda_2^k > 0$ (resp. $\lambda_2^k < 0$) then for nonzero small $s$, the solution $u^k(\theta, s)$ is a locally asymptotically stable (resp. unstable) stationary solution of the equation~\eqref{eq:dtFabstract1d} modulo translations.
\end{thm}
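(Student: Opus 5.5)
This is a Crandall--Rabinowitz bifurcation from a simple eigenvalue, carried out in the subspace of even functions so that the translation orbit does not make the kernel two-dimensional. Translations are then restored by the phase $\theta$.

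\textbf{Step 1: restrict to even functions.} Since $W$ commutes with the reflection $x\mapsto -x$ and is diagonal, $F$ maps the even subspace $H^{2n}_{e}\times\dR$ into $L^2_{e}$. Assumption~\eqref{as:SCA-AssW2} gives $F(0,\lambda)=0$ for all $\lambda$, up to the constant $\sigma_0$: we have $W\sigma_0 = \sigma_0 \hat W_0 e_0 = 0$.

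\textbf{Step 2: the linearization at $u=0$.} Since $\sigma'(0)=1$, we get $D_uF(0,\lambda) = A^n_{\nu,\tau} - \lambda W$. This operator is diagonal with symbol $\langle \ell\rangle^n_{\nu,\tau}-\lambda \hat W_\ell = \langle \ell\rangle^n_{\nu,\tau}(1-\lambda \mathrm c_\ell)$.

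At $\lambda_0^k = 1/\mathrm c_k$, exceptionality of $k$ makes the kernel on even functions exactly $\Span\{\cos(kx)\}$. The multiplier $\mathrm c_{-k}=\mathrm c_k$ by reflection symmetry, and the cosine is the even combination of $e_k$ and $e_{-k}$. Because $s\le n$, we have $\mathrm c_\ell\to 0$, so the operator is Fredholm of index $0$ with range equal to the $L^2$-orthogonal complement of $\cos(kx)$.

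The transversality condition reads $\partial_\lambda D_uF(0,\lambda_0)\cos(kx) = -\hat W_k\cos(kx)\notin \mathrm{Range}$, which holds because $\hat W_k\neq0$. Crandall--Rabinowitz then yields an analytic curve $s\mapsto(u^k(0,s),\lambda^k(s))$, with leading profile $\tfrac{s}{2\sqrt\pi}(e^{ikx}+e^{-ikx})$ (normalized in $L^2(2\pi\Torus)$). Analyticity uses Assumption~\eqref{as:SCA-AssSigma}: the Nemytskii operator $u\mapsto\sigma(u)$ is analytic near $0$ on $H^{2n}$, an algebra for $n\ge1$ in one dimension.

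\textbf{Step 3: translations.} Defining $u^k(\theta,s)(x) = u^k(0,s)(x+\theta/k)$ gives the full family, because $F$ is translation equivariant.

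\textbf{Step 4: the coefficient $\lambda_2^k$.} Expand $u = s\phi + s^2 u_2 + s^3u_3+\dots$ and $\lambda=\lambda_0+s\lambda_1+\tfrac{s^2}{2}\lambda_2+\dots$, where $\phi = \tfrac{1}{\sqrt\pi}\cos(kx)$.

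At order $s^2$, solve $(A-\lambda_0W)u_2 = \lambda_1 W\phi + \tfrac{\lambda_0\sigma_2}{2}W\phi^2$. Here $\phi^2 = \tfrac{1}{2\pi}(1+\cos 2kx)$, and $W$ kills the constant mode. Projecting onto $\phi$ forces $\lambda_1=0$, and then
\[
u_2=\tfrac{\lambda_0\sigma_2}{4\pi}\tfrac{\hat W_{2k}}{\hat A_{2k}-\lambda_0\hat W_{2k}}\cos 2kx .
\]
The denominator is nonzero by exceptionality.

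At order $s^3$, project onto $\phi$ to get the solvability condition. The cubic term contributes $\tfrac{\lambda_0\sigma_3}{6}\phi^3$ and the cross term contributes $\lambda_0\sigma_2\phi u_2$. Using $\langle\phi^3,\phi\rangle$ and $\langle \cos 2kx\cos kx,\phi\rangle$ produces the stated formula for $\lambda_2^k$. The factors of $\pi$ and $1/2$ have to be tracked carefully here, but the computation is routine.

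\textbf{Step 5: stability, the main obstacle.} We linearize at $u^k(\theta,s)$, which is a zero eigenvalue problem perturbed by $s$.

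When $k=\argmax\mathrm c_\ell$, at $\lambda_0$ all other modes satisfy $1-\lambda_0\mathrm c_\ell>0$, assuming $\mathrm c_k>0$, which makes $\lambda_0>0$. So the spectrum of $D_uF(0,\lambda_0)$ is positive except for the double zero spanned by $\cos kx$ and $\sin kx$.

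Along the branch, $\sin kx$ direction: the eigenvalue stays exactly $0$, since $\partial_x u^k$ lies in the kernel by translation invariance. Treat this as the neutral direction modulo translations.

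Along the branch, $\cos kx$ direction: the perturbed eigenvalue is given by the Crandall--Rabinowitz exchange-of-stability formula, $\mu(s)\sim -s\lambda'(s)\hat W_k\cdot(\text{positive})$. This gives $\mu(s)\approx c\, s^2\lambda_2^k$ with $c>0$, so its sign is the sign of $\lambda_2^k$.

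Because $A$ is sectorial with compact resolvent, the linearized semigroup then decays exponentially on the complement of the translation mode. Orbital asymptotic stability modulo translations follows from the standard principle of linearized stability for a family of equilibria with a one-dimensional normally hyperbolic manifold (Henry's theorem). The delicate point is getting the sign constant in the exchange-of-stability formula right, and checking that the sectorial framework on $L^2$ with $s\le n$ holds uniformly in $s$.
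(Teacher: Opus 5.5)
Your proposal is correct and takes the same route as the paper: Crandall--Rabinowitz on the even subspace, with kernel spanned by $\cos(kx)$ and transversality $-\hat{W}_k\neq 0$; translations to generate $\theta$; the second-order coefficient, where your direct power-series matching is the unpacked form of the Kielh\"ofer formulas the paper quotes and does reproduce the stated $\lambda_2^k$; and exchange of stability plus Henry's linearized stability. One sign slip in Step 5: the trivial-branch eigenvalue $\hat{A}_k-\lambda\hat{W}_k$ has derivative $-\hat{W}_k$, so the exchange-of-stability formula gives $\hat\mu(s)\sim s\,\lambda'(s)\,\hat{W}_k\sim s^2\lambda_2^k\hat{W}_k$, not $-s\lambda'(s)\hat{W}_k$, and together with $\hat{W}_k=\hat{A}_k\mathrm{c}_k>0$ this yields the conclusion you state.
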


\begin{prop}[Application of Theorem~\ref{thm:BifStatDiag}]
    \label{prop:applicationSAsym}
    The stationary equations
    \begin{align}
        \nu \partial^4_x u + \tau u = - \lambda \partial^2_x \sigma(u),\\[-1.5em]
        \intertext{and\vspace{-1.em}}
        -\nu \partial^2_x u + \tau u = \lambda a * \sigma(u)
    \end{align}
    admit nontrivial solutions, provided that $a \in H^{-s}$ is an even distribution and $\left(\int a(x) \cos (k x) \dd x /\langle k\rangle^n_{\nu,\tau}\right)_{k \in \dN}$ admits nonzero unique values.
\end{prop}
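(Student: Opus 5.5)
The plan is to deduce Proposition~\ref{prop:applicationSAsym} directly from Theorem~\ref{thm:BifStatDiag}. For each of the two equations we identify $n$, $\nu$, $\tau$, the operator $W$ and the nonlinearity, and then check Assumptions~\eqref{as:SCA-AssW}, \eqref{as:SCA-AssW2}, \eqref{as:SCA-AssSigma}, the commutation of $W$ with the reflection $Ru(x)=u(-x)$, and the existence of an exceptional index $k$. Once these hold, the theorem supplies the bifurcating branch $u^k(\theta,s)$. Its leading term is a nonzero multiple of $\cos(\theta+kx)$ with $k\geq 1$, so for small $s\neq 0$ it is nonconstant. This gives the nontrivial solutions, with $\lambda=\lambda^k(s)$. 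We take $\sigma$ to be a sigmoid analytic near $0$ and, after rescaling $\lambda$ as remarked after Assumption~\eqref{as:SCA-AssSigma}, normalized so that $\sigma'(0)=1$.

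\emph{First equation.} Here $n=2$ and $W=-\partial_x^2$, which is diagonal with $\hat W_k=k^2$. It has order $s=2\leq n$, and $W e_0=0$ holds trivially. Since $\partial_x^2$ commutes with reflection, so does $W$. Moreover
\[
\mathrm c_k=\frac{k^2}{\nu k^4+\tau}.
\]
The function $t\mapsto t/(\nu t^2+\tau)$ is strictly increasing on $[0,\sqrt{\tau/\nu}]$ and strictly decreasing afterwards. Hence two distinct positive integers can share the same value of $\mathrm c$ only when $k^2\ell^2=\tau/\nu$, i.e.\ when $k\ell=\sqrt{\tau/\nu}$. Any given $k$ has at most one such partner, so at most one pair of indices is non-exceptional, and every other $k\geq1$ is exceptional. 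Taking any such $k$ gives existence. The index $k=0$ is irrelevant because $\mathrm c_0=0$.

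\emph{Second equation.} Here $n=1$ and $W u=a*u$, so $\hat W_k=\hat a_k=\int a(x)e^{-ikx}\,\dd x$. Because $a$ is even, $\hat W_k=\int a\cos(kx)\,\dd x$ is real and $\hat W_{-k}=\hat W_k$; evenness also gives $a*(Ru)=R(a*u)$. The condition $a\in H^{-s}$ gives $\langle k\rangle^{-s}|\hat a_k|\in\ell^\infty$, so Assumption~\eqref{as:SCA-AssW} holds provided $s\leq 1$, which we take as implicit in the statement.

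Assumption~\eqref{as:SCA-AssW2} requires $\hat a_0=\int a=0$. This condition is not stated explicitly, and it is the main obstacle. I would try first to remove it: replacing $a$ by $a-\hat a_0/(2\pi)$ only changes the equation by a constant multiple of $\int\sigma(u)$. Alternatively, I would check whether the proof of Theorem~\ref{thm:BifStatDiag} only needs a trivial branch, obtained by shifting $u$ by a constant solving $\tau c=\lambda\hat a_0\sigma(c)$. If neither works, I would read the hypothesis as including the zero-mean condition, which is natural for a Mexican-hat-type kernel.

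Finally, the hypothesis that the ratios $\mathrm c_k$, $k\in\dN$, take pairwise distinct values means that every $k$ with $\mathrm c_k\neq0$ is exceptional. Such a $k\geq1$ exists unless $a$ is constant, in which case the equation has no nonconstant bifurcation and the statement is vacuous. Theorem~\ref{thm:BifStatDiag} then concludes the proof.
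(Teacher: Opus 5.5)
Your route is the paper's: Proposition~\ref{prop:applicationSAsym} is meant as a direct check of the hypotheses of Theorem~\ref{thm:BifStatDiag}, and the paper gives no argument beyond that. Two steps in your check need repair.

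\textbf{First equation.} The inference ``each $k$ has at most one partner, so at most one pair is non-exceptional'' is wrong. Collisions occur exactly when $k\ell=\sqrt{\tau/\nu}$ with $k\neq\ell$, and an integer $\sqrt{\tau/\nu}$ can factor in several ways. For $\tau/\nu=144$, the pairs $(1,12)$, $(2,6)$ and $(3,4)$ all collide. What is true, and suffices, is that every $k>\sqrt{\tau/\nu}$ is exceptional, since its partner would have to be smaller than $1$.

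\textbf{Second equation.} You are right that the statement omits $\int a=0$, which Assumption~\eqref{as:SCA-AssW2} requires. However, neither of your first two remedies works as stated.
\begin{enumerate}
\item Replacing $a$ by $a-\hat a_0/(2\pi)$ adds the term $-\lambda\frac{\hat a_0}{2\pi}\int\sigma(u)$. Solutions of the modified problem therefore do not solve the original one.
\item The constant-branch remedy needs $\lambda\mapsto\lambda\sigma'(c(\lambda))$ to reach $1/\mathrm{c}_k$ with nonzero derivative there. This is not automatic for a sigmoid.
\end{enumerate}

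The clean fix comes from the proof of Theorem~\ref{thm:BifStatDiag}. There, $\hat W_0=0$ serves only three purposes: making $u\equiv0$ a solution for every $\lambda$, keeping $e_0$ out of the kernel, and simplifying the formula for $\lambda^k_2$.

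If $\sigma(0)=0$ (for instance an odd sigmoid such as the normalized one in the proof of Theorem~\ref{thm:DynUniversalApprox}), then the argument goes through without the zero-mean condition:
\begin{enumerate}
\item $F(0,\lambda)=0$ holds regardless of $\hat a_0$.
\item $e_0$ is excluded from the kernel as soon as $\mathrm{c}_0=\hat a_0/\tau\neq\mathrm{c}_k$. This is part of your uniqueness hypothesis if the index $0$ is included.
\item Crandall--Rabinowitz then applies verbatim. Only $\lambda^k_2$ picks up an extra zero-mode contribution from $W(\phi^2)$, which is irrelevant for existence.
\end{enumerate}
If $\sigma(0)\neq0$, the zero-mean condition genuinely has to be added to the hypotheses, which the statement leaves implicit.
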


The following theorem is an extension of the previous result for self-adjoint perturbations of diagonal operators.

\begin{thm}
    \label{thm:BifStatDiagSelfAdjPerturbation}
    Let $n \geq 1$, $\nu > 0 $, $\tau >0$ and $\omega \mapsto W(\omega) \in C(U, \mathcal{L}(H^{s}, L^2))$ be an analytic mapping of self-adjoint operators, commuting with the reflection operator, such that $W(\omega) e_0 = 0$ for all $\omega \in U$. We expand $W$ as $W(\omega) = W_0 + \omega W_1 + o(\omega)$ and we suppose that $W_0$ satisfies~\eqref{as:SCA-AssW} and $\sigma$ satisfies~\eqref{as:SCA-AssSigma}.
    
    Then, for any exceptional index $k$ of the operator $W_0$, there exists a bifurcation surface, $(\omega, s) \mapsto (u^k(\omega, s), \lambda(\omega, s))$, with $\lambda(\omega, s) =  \lambda_{0} + \omega \lambda_{1} + o(\omega+s)$, and

    \begin{equation*}
        u^k(\omega, s)= s\phi_0+ s\omega\phi_{1} + o(s+s\omega), 
    \end{equation*}
    where $\lambda_0 = \hat{A}_k / \hat{W}_{0;k}$ and $\lambda_1 = -\lambda_0\langle \phi_0, W_1 \phi_0\rangle/\langle \phi_0, W_0 \phi_0\rangle$, and
    \begin{equation*}
        \phi_0 = \frac{1}{\sqrt{\pi}} \cos(k\,\cdot\,),\;\phi_1= (A^n_\tau-\lambda_0W_0)^{-1}(\lambda_0 W_1 \phi_0 + \lambda_1 W_0 \phi_0).
    \end{equation*}
\end{thm}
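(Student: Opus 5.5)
We use a Lyapunov–Schmidt reduction combined with the analytic implicit function theorem. This parallels the proof of Theorem~\ref{thm:BifStatDiag}, with the operator $W(\omega)$ now treated as an extra analytic parameter.

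\textbf{Symmetric setting.} Since $W(\omega)$ commutes with the reflection $x\mapsto -x$, we restrict the map $F(u,\lambda,\omega)=A^n_{\nu,\tau}u-\lambda W(\omega)\sigma(u)$ to the closed subspace of even functions $H^{2n}_{e}\to L^2_e$. This removes the translation invariance. At $\omega=0$, Assumption~\eqref{as:SCA-AssW} makes $W_0$ diagonal, so on even functions its eigenvectors are $\cos(\ell\,\cdot)$.

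\textbf{Kernel at $(0,\lambda_0,0)$.} Since $W(\omega)e_0=0$ and $\sigma'(0)=1$, the linearization at $(u,\lambda,\omega)=(0,\lambda_0,0)$ is
\[
L_0=A^n_{\nu,\tau}-\lambda_0W_0 .
\]
Strictly, $\sigma(0)=\sigma_0$ contributes only $\sigma_0W(\omega)e_0=0$, so $u=0$ remains a trivial branch. Because $k$ is exceptional and $\lambda_0=\hat A_k/\hat W_{0;k}$, the Fourier multiplier of $L_0$ vanishes exactly on $\pm k$. Hence $\ker L_0\cap H^{2n}_e=\Span\{\phi_0\}$ with $\phi_0=\pi^{-1/2}\cos(k\,\cdot)$. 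For $\ell$ large, $\langle\ell\rangle^n_{\nu,\tau}$ dominates $\langle\ell\rangle^s$ since $s\le n<2n$. Therefore $L_0$ is Fredholm of index zero, and its range is $\phi_0^{\perp}$ by self-adjointness. Let $P$ denote the $L^2$-orthogonal projection onto $\phi_0$ and $Q=I-P$.

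\textbf{Reduction.} Write $u=s(\phi_0+w)$ with $w\perp\phi_0$. Divide the equation by $s$ using
\[
\sigma(sv)-\sigma_0=s\int_0^1\sigma'(tsv)\,\dd t\,v .
\]
This gives an analytic map $G(w,\lambda,\omega,s)=0$ with
\[
G(0,\lambda_0,0,0)=0,\qquad \partial_{(w,\lambda)}G(0,\lambda_0,0,0)(\dot w,\dot\lambda)=L_0\dot w-\dot\lambda W_0\phi_0 .
\]
This derivative is an isomorphism $\phi_0^{\perp}\times\dR\to L^2_e$. Indeed, $L_0$ is invertible from $\phi_0^\perp$ onto $\phi_0^\perp$, and $\langle W_0\phi_0,\phi_0\rangle=\hat W_{0;k}\ne0$ because $\mathrm{c}_k\ne0$. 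The analytic implicit function theorem then yields analytic functions $w(\omega,s)$ and $\lambda(\omega,s)$ with $w(0,0)=0$ and $\lambda(0,0)=\lambda_0$. This gives the bifurcation surface $u^k(\omega,s)=s(\phi_0+w(\omega,s))$.

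\textbf{First-order coefficients.} Differentiate $G$ in $\omega$ at $(0,0)$:
\[
L_0\partial_\omega w-\lambda_0W_1\phi_0-\partial_\omega\lambda\,W_0\phi_0=0 .
\]
Pairing with $\phi_0$ and using $L_0^*=L_0$, $L_0\phi_0=0$, we get
\[
\lambda_1=\partial_\omega\lambda=-\lambda_0\frac{\langle\phi_0,W_1\phi_0\rangle}{\langle\phi_0,W_0\phi_0\rangle}.
\]
The authors' sign convention should be checked here, since with $F=A-\lambda W\sigma$ one may naturally get the opposite sign. Inverting $L_0$ on $\phi_0^\perp$ then gives
\[
\phi_1=\partial_\omega w=(A^n_\tau-\lambda_0W_0)^{-1}(\lambda_0W_1\phi_0+\lambda_1W_0\phi_0),
\]
and the stated expansion of $u^k$ follows.

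\textbf{Main obstacle.} The delicate step is justifying that $W(\omega)$ for $\omega\ne0$ is only bounded $H^s\to L^2$ and no longer diagonal. The Fredholm and isomorphism properties must therefore come from perturbing $L_0$, which needs $s<2n$ so that $\lambda W(\omega)$ is relatively compact with respect to $A^n_{\nu,\tau}$. This is available since $s\le n$. We also need $\sigma$ analytic, or at least $C^2$, as a Nemytskii map $H^{2n}\to H^{s}$. On the one-dimensional torus this holds because $H^{2n}\subset C^{1}$ is an algebra.
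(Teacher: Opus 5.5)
Your argument is correct, and it takes a more direct route than the paper. The paper works in three steps:
\begin{enumerate}
\item It solves the eigenvalue perturbation problem $\bigl((A-\lambda W(\omega))\phi,\|\phi\|^2-1\bigr)=0$ by the implicit function theorem, obtaining a critical curve $\omega\mapsto(\lambda(\omega),\phi_0(\omega))$. This step uses self-adjointness of $W(\omega)$ for every $\omega$ to identify kernel and cokernel.
\item It performs a Lyapunov--Schmidt reduction with the splitting frozen at $\omega=0$, divides the bifurcation function $\Phi(\lambda,\omega,s)$ by $s$, and solves for $\lambda(s,\omega)$.
\item It identifies $\lambda(0,\omega)$ with the eigenvalue curve by a kernel argument ``up to a reparameterization''.
\end{enumerate}

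Your blow-up $u=s(\phi_0+w)$, legitimate because $W(\omega)\sigma_0=0$, replaces all of this with a single implicit function theorem in $(w,\lambda)$ with parameters $(\omega,s)$. It needs invertibility only at $\omega=0$, where $W_0$ is diagonal. Consequently, self-adjointness of $W(\omega)$ for $\omega\neq0$ is not used. Nor do you need the relative-compactness or Fredholm perturbation argument from your ``main obstacle'' paragraph: analyticity of $\omega\mapsto W(\omega)\in\mathcal{L}(H^s,L^2)$ together with $H^{2n}\hookrightarrow H^s$ already makes the map $C^1$.

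Moreover, the slice $s=0$ of your equation is $(A-\lambda W(\omega))(\phi_0+w)=0$. This is exactly the paper's eigenvalue problem, with the linear normalization $\langle\phi_0,\phi\rangle=1$ in place of $\|\phi\|=1$, so the critical curve comes for free. What the paper's route buys is a presentation inside the Crandall--Rabinowitz and Lyapunov--Schmidt framework of Theorem~\ref{thm:BifStatDiag}. It makes explicit that a simple critical eigenvalue persists for each fixed $\omega$.

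Two small corrections.
\begin{itemize}
\item Your pairing computation gives exactly the stated sign of $\lambda_1$, so your sign hedge is unfounded.
\item To get $\lambda=\lambda_0+\omega\lambda_1+o(|\omega|+|s|)$ you must also show $\partial_s\lambda(0,0)=0$. Differentiate $G$ in $s$ at $(0,\lambda_0,0,0)$ and pair with $\phi_0$:
\[
\hat W_{0;k}\,\partial_s\lambda(0,0)=-\tfrac{\lambda_0\sigma_2}{2}\langle W_0(\phi_0^2),\phi_0\rangle .
\]
The right-hand side vanishes because $\phi_0^2$ contains only the modes $0$ and $\pm2k$, and $W_0$ is diagonal.
\end{itemize}
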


\begin{prop}[Application of Theorem~\ref{thm:BifStatDiagSelfAdjPerturbation}]
    \label{prop:applicationSAPert}
    The stationary equation
    \begin{equation}
        \nu \partial_x^4 u + \tau u = - \lambda \partial_x( b_\omega \partial_x \sigma(u)),
    \end{equation}
    with $b_\omega = 1+\omega b_1 + o(\omega)$, where $b_\omega$ is a smooth even function, admits nontrivial solutions, as does
    \begin{equation}
        -\nu \partial_x^2 u + \tau u = \lambda A_\omega[\sigma(u)],
    \end{equation}
    where $A_\omega[f](x) = \int a_\omega(x, y) f(y) \dd y$, for some smooth symmetric function $a_\omega(x, y) = a_\omega(y,x)$, analytic in $\omega$, satisfying an expansion of the form $a_\omega(x,y) = a_0(y-x) + \omega a_1 (x,y) + o(\omega)$.
\end{prop}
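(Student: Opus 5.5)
The plan is to derive Proposition~\ref{prop:applicationSAPert} from Theorem~\ref{thm:BifStatDiagSelfAdjPerturbation}. For each equation I will put it in the form~\eqref{eq:StationnarySol1D1L}, that is $A^n_{\nu,\tau}u=\lambda W(\omega)\sigma(u)$, and then check the hypotheses of that theorem. The hypotheses are: analyticity in $\omega$, self-adjointness, commutation with the reflection $Ru(x)=u(-x)$, the condition $W(\omega)e_0=0$, the diagonal structure~\eqref{as:SCA-AssW} of $W_0$, and the existence of an exceptional index of $W_0$. The nonlinearity $\sigma$ is assumed to satisfy~\eqref{as:SCA-AssSigma}, possibly after rescaling by $\lambda$.

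\textbf{First equation.} Here $n=2$ and $W(\omega)f=-\partial_x(b_\omega\partial_x f)$, which is a second-order operator, so $s=2\le n$. In the weak form $\langle W(\omega)f,g\rangle=\int b_\omega f'g'$, which shows that $W(\omega)$ is symmetric, hence self-adjoint on its natural domain.

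\begin{itemize}
\item \emph{Constants are annihilated:} $W(\omega)e_0=0$ because $\partial_x 1=0$.
\item \emph{Reflection:} since $b_\omega$ is even, $R\partial_x=-\partial_xR$ and the two sign changes cancel, so $RW(\omega)=W(\omega)R$.
\item \emph{Analyticity:} $\omega\mapsto b_\omega$ is analytic into smooth functions, so $W(\omega)=W_0+\omega W_1+o(\omega)$ with $W_0=-\partial_x^2$ and $W_1=-\partial_x(b_1\partial_x\cdot)$.
\item \emph{Diagonal structure:} $W_0e_k=k^2e_k$, so $W_0$ satisfies~\eqref{as:SCA-AssW} with $\hat W_{0;k}=k^2$.
\item \emph{Exceptional index:} $\mathrm{c}_k=k^2/(\nu k^4+\tau)$. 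As a function of $x=k^2>0$, the map $x\mapsto x/(\nu x^2+\tau)$ increases on $(0,\sqrt{\tau/\nu})$ and decreases afterwards. Each value is therefore attained by at most two values of $x$, so for all $k\in\dN$ except finitely many coincidences, $\mathrm{c}_k$ is nonzero and distinct from all other $\mathrm{c}_\ell$. In particular a large $k$ is exceptional, since the tail of $\mathrm{c}_k$ is strictly decreasing and below every earlier value beyond some point.
\end{itemize}

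\textbf{Second equation.} Here $n=1$ and $W(\omega)=A_\omega$.

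\begin{itemize}
\item \emph{Self-adjointness:} the kernel is symmetric, so $A_\omega$ is self-adjoint. It is bounded on $L^2$ because the kernel is smooth.
\item \emph{Analyticity:} this is inherited from the analyticity of $a_\omega$ in $\omega$.
\item \emph{Diagonal structure:} $W_0$ is convolution by $a_0$, hence diagonal with $\hat W_{0;k}=\hat a_0(k)$. Since $a_0(y-x)=a_0(x-y)$ by symmetry, $a_0$ is even and $\hat a_0$ is real.
\end{itemize}

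The other hypotheses are not automatic and I will impose them:

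\begin{itemize}
\item \emph{Constants:} $W(\omega)e_0=0$ means $\int a_\omega(x,y)\,\dd y=0$ for all $x$.
\item \emph{Reflection:} commutation with $R$ means $a_\omega(-x,-y)=a_\omega(x,y)$.
\item \emph{Exceptional index:} I will require, as in Proposition~\ref{prop:applicationSAsym}, that some value $\hat a_0(k)/(\nu k^2+\tau)$ is nonzero and attained only once.
\end{itemize}

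These conditions are the ones implicit in "admits nontrivial solutions". I expect the main obstacle to be phrasing them consistently with the statement: either read them as tacit assumptions paralleling Proposition~\ref{prop:applicationSAsym}, or show that they can be arranged.

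\textbf{Conclusion.} In both cases Theorem~\ref{thm:BifStatDiagSelfAdjPerturbation} yields the bifurcation surface $(u^k(\omega,s),\lambda(\omega,s))$ with
\[
u^k=s\phi_0+O(s\omega)+o(s),\qquad \phi_0=\pi^{-1/2}\cos(k\,\cdot\,).
\]
For small $s\neq 0$ this $u^k$ is nonzero and nonconstant, which gives the nontrivial solutions. One technical point remains for the first equation: $W(\omega)$ is unbounded on $L^2$, so I must check that the theorem's setting $\mathcal{L}(H^s,L^2)$ with $s=2\le n=2$ matches. The order condition $s\le n$ is exactly what makes $A^n-\lambda W$ Fredholm from $H^{2n}$ to $L^2$, so this should go through.
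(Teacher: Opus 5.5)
Your proposal is correct and follows essentially the paper's route: the paper gives no separate proof of this proposition and intends it as a direct verification of the hypotheses of Theorem~\ref{thm:BifStatDiagSelfAdjPerturbation}, which is what you do. Your exceptional-index argument for $\mathrm{c}_k=k^2/(\nu k^4+\tau)$ is a useful addition; in fact every $k>\sqrt{\tau/\nu}$ is exceptional, since $\mathrm{c}_k=\mathrm{c}_\ell$ with $k\neq\ell$ forces $k\ell=\sqrt{\tau/\nu}$. For the second equation, settle your hedge in favor of tacit hypotheses: $A_\omega e_0=0$, $a_\omega(-x,-y)=a_\omega(x,y)$ and the exceptional-index condition on $\hat a_0$, together with a $C^1$ or analytic dependence of $b_\omega$ on $\omega$ for the first equation, since these cannot be arranged in general (for instance $a_\omega\equiv 0$ meets every stated requirement, yet $-\nu u''+\tau u=0$ has only the trivial solution).
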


%\subsubsection{Traveling waves solutions}

Finally, the next theorem establishes an existence result for traveling wave solutions of the equation~\eqref{eq:dtFabstract1d}.

\begin{thm}
    \label{thm:BifHopfDiag}
    Let $n \geq 1$, $\nu > 0 $, $\tau >0$. Suppose that~\eqref{as:SCA-AssW}, \eqref{as:SCA-AssW2} and~\eqref{as:SCA-AssSigma} hold. Then, for any \textbf{Hopf exceptional index} $k\in \dN$, that is, $\Real(\mathrm{c}_k)$ is unique, nonzero and $\Imag(\hat{W}_k) \neq 0$, there exists a bifurcation curve $s \mapsto (\kappa(s), u(s), \lambda(s))$ solution of the equation~\eqref{eq:dtFabstract1d}, such that $u(s) \in C^{1}_{per, \kappa(s)}(Z)\cap C_{per, \kappa(s)}(X)$ with
    \begin{equation*}
        \kappa_0 = \Imag(\hat{W}_k)/\Real(\mathrm{c}_k) \text{ and }\kappa(s) = \kappa_0 + o(s) \text{ and }\lambda(s) = \frac{1}{\Real (\mathrm{c}_k)} + o(s).
    \end{equation*}
    Furthermore, $u$ satisfies the following expansion in $L^2$
    \begin{equation*}
        (u(s))(t, x) = \frac{s}{\sqrt{\pi}}\cos(k x-\kappa_0 t) + o(s).
    \end{equation*}
\end{thm}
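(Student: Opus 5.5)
We prove Theorem~\ref{thm:BifHopfDiag} by a Hopf-type bifurcation argument in the traveling-wave ansatz, which turns the time-periodic problem into a stationary one with an extra unknown speed. We look for solutions $u(t,x) = v(kx - \kappa t)$, or more conveniently $u(t,x)=w(x-ct)$ with $c = \kappa/k$. Substituting into~\eqref{eq:dtFabstract1d} gives the profile equation
\begin{equation*}
    G(w, c, \lambda) := -c\,\partial_x w + (\nu(-\Delta)^n + \tau) w - \lambda W \sigma(w) = 0, \qquad w \in H^{2n}(2\pi\Torus).
\end{equation*}
Here $G\colon H^{2n}\times\dR\times\dR\to L^2$ is analytic near $w=0$ by~\eqref{as:SCA-AssSigma} and the order $s\le n<2n$ of $W$. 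By~\eqref{as:SCA-AssW2}, $w=0$ solves it for all $(c,\lambda)$. The problem is equivariant under translations $w\mapsto w(\cdot+\theta)$, since $W$ is diagonal. Time-periodicity with period $2\pi/\kappa$ and regularity $C^1_{per,\kappa}(L^2)\cap C_{per,\kappa}(H^{2n})$ then follow automatically from $w\in H^{2n}$.

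\textbf{Linearization.} Since $\sigma'(0)=1$, the linearization at $w=0$ is diagonal in Fourier modes:
\begin{equation*}
    L(c,\lambda)e_\ell = \bigl(-ic\ell + \langle \ell\rangle^n_{\nu,\tau} - \lambda \hat W_\ell\bigr)e_\ell .
\end{equation*}
The mode $\ell = k$ is in the kernel exactly when the real and imaginary parts both vanish:
\begin{equation*}
    \lambda\,\Real \hat W_k = \langle k\rangle^n_{\nu,\tau}, \qquad c k = -\lambda\,\Imag \hat W_k .
\end{equation*}
The first equation gives $\lambda_0 = 1/\Real(\mathrm{c}_k)$; the second gives the speed, hence $\kappa_0$ up to the sign and normalization conventions of the statement, which we will fix. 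The mode $-k$ is the complex conjugate, so on real functions the kernel is two-dimensional, spanned by $\cos(k\cdot)$ and $\sin(k\cdot)$, i.e.\ one translation orbit. Uniqueness of $\Real(\mathrm{c}_k)$ rules out any other $\ell$ satisfying the real-part condition, so no other mode enters the kernel. Since $\langle\ell\rangle^n_{\nu,\tau}$ dominates $|\hat W_\ell|\lesssim\langle\ell\rangle^s$, $L$ is Fredholm of index zero from $H^{2n}$ to $L^2$ with a uniformly bounded inverse on the complement of modes $\pm k$.

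\textbf{Reduction.} We break translation symmetry by restricting to the closed subspace of $w$ whose $\sin(k\cdot)$-coefficient vanishes. (Reflection is not assumed to commute with $W$ here, so we cannot restrict to even functions; the phase normalization is what replaces it.) We use the Lyapunov--Schmidt decomposition $w = s\phi_0 + \psi$ with $\phi_0 = \pi^{-1/2}\cos(k\cdot)$ and $\psi \perp e_{\pm k}$. The range equation is solved for $\psi = \psi(s,c,\lambda) = O(s^2)$ by the implicit function theorem. The bifurcation equation is the projection onto $e_k$, divided by $s$. It is a complex-valued, hence two real equations, in $(c,\lambda)$:
\begin{equation*}
    \Phi(s,c,\lambda) = -ick + \langle k\rangle^n_{\nu,\tau} - \lambda\hat W_k + O(s).
\end{equation*}
Its Jacobian in $(c,\lambda)$ at $s=0$ is the real $2\times2$ matrix with columns $(0,-k)^\Trps$ and $(-\Real\hat W_k, -\Imag\hat W_k)^\Trps$. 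Its determinant is $-k\,\Real\hat W_k$, which is nonzero because $k\ge1$ and $\Real(\mathrm{c}_k)\ne0$. The implicit function theorem then gives analytic $c(s),\lambda(s)$ with the stated limits. The expansion $u = \frac{s}{\sqrt\pi}\cos(kx-\kappa_0 t)+o(s)$ follows from $\psi=O(s^2)$.

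\textbf{Main obstacle.} The delicate step is the functional setup of the range equation, which needs a uniform inverse of $L(c,\lambda)$ on the complement of modes $\pm k$ for $(c,\lambda)$ near $(c_0,\lambda_0)$. For modes $\ell$ with $\Real$ part near zero we must exclude $\ell$ with $\lambda_0\Real\hat W_\ell=\langle\ell\rangle^n$; this is exactly the uniqueness of $\Real(\mathrm c_k)$, combined with the high-frequency decay of $\mathrm{c}_\ell$, which gives a positive gap. We must also check that the Nemytskii map $w\mapsto W\sigma(w)$ is $C^1$ from $H^{2n}$ into $L^2$ with quadratic remainder. This holds since $H^{2n}\subset L^\infty$ in one dimension, so $\sigma$ analytic near $0$ acts analytically on $H^{2n}$, and $W$ maps $H^{2n}\to H^{2n-s}\subset L^2$. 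The hypothesis $\Imag(\hat W_k)\neq0$ is what makes $\kappa_0\ne0$, so the solution is a genuinely traveling wave rather than a stationary pattern.
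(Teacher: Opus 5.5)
Your argument is correct, and it takes a genuinely different route from the paper. The paper works in spaces of time-periodic functions and verifies the hypotheses of Kielh\"ofer's abstract Hopf theorem (Thm.~I.8.2). Those hypotheses are:
\begin{itemize}
\item smoothness, the Fredholm property and sectoriality of the linearization (Proposition~\ref{prop:FfunctionalProperties});
\item simple eigenvalues $\pm i\kappa_0$ of $-D_uF(0,\lambda_0)$ at the modes $e_{\pm k}$;
\item non-resonance, from uniqueness of $\Real(\mathrm{c}_k)$;
\item transversality, from the linear dependence of the eigenvalues on $\lambda$.
\end{itemize}
The expansions are then quoted from Cor.~I.8.3. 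You instead use translation equivariance to freeze the profile, $u=w(x-ct)$. This reduces the problem to a stationary Lyapunov--Schmidt problem in $H^{2n}$ with the speed as a second parameter. The phase normalization removes the $\sin(k\cdot)$ direction, leaving two real equations for $(c,\lambda)$, and transversality becomes the determinant $-k\Real\hat W_k\neq 0$.

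Your route needs only the implicit function theorem. It avoids time-periodic spaces, semigroup theory and the non-resonance check at $im\kappa_0$, and it yields a genuine traveling wave by construction rather than just a periodic orbit. It would also tolerate other modes with $\Real\mathrm{c}_\ell=\Real\mathrm{c}_k$, provided $c_0\ell\neq-\lambda_0\Imag\hat W_\ell$. The paper's route does not use equivariance, so it is the one that would survive non-diagonal perturbations of $W$.

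Three small points should be added.

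First, the implicit function theorem alone gives $c(s)=c_0+O(s)$ and $\lambda(s)=\lambda_0+O(s)$, whereas the statement claims $o(s)$. You need $\partial_s\Phi(0,c,\lambda)=0$. One way to see it: the shift $w\mapsto w(\cdot+\pi/k)$ preserves your normalization, sends $\phi_0\mapsto-\phi_0$, and maps solutions to solutions. By local uniqueness of $\psi$ you get $\Phi(-s,c,\lambda)=\Phi(s,c,\lambda)$, so $c$ and $\lambda$ are even in $s$. Equivalently, the order-$s$ term of $\Phi$ is the $e_k$-coefficient of $-\tfrac{\lambda\sigma_2}{2}W(\phi_0^2)$, which vanishes because $\phi_0^2$ only contains the modes $0,\pm2k$.

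Second, $u(t,x)=w(x-ct)$ has time period $2\pi/(k|c|)$, as required by $C_{per,\kappa}$, only if $w$ is $2\pi/k$-periodic. A priori you only get $2\pi/|c|$, so this is not automatic from $w\in H^{2n}$. It follows from the same uniqueness argument applied to the shift by $2\pi/k$, or by posing the problem in the invariant subspace spanned by $e_{jk}$, $j\in\dZ$.

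Third, you need not be bent to the sign convention of the statement. Your computation gives $kc_0=-\lambda_0\Imag\hat W_k=-\kappa_0$, that is, $u\approx s\pi^{-1/2}\cos(kx+\kappa_0 t)$. This agrees with the paper's own computation $D_uF(0,\lambda_0)e_k=-i\kappa_0 e_k$, which under $\partial_t u=-D_uF\,u$ produces the factor $e^{i\kappa_0 t}$. The minus sign in the statement's displayed expansion is a slip there.
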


\begin{prop}[Application of Theorem~\ref{thm:BifHopfDiag}]
    \label{prop:applicationTraveling}
    The equation
    \begin{equation}
        \partial_t u + \nu \partial^4_x u + \tau u = \lambda (-\partial^2_x + \gamma \partial_x ) \sigma(u),
    \end{equation}
    with $\gamma\neq 0$, admits traveling wave solutions, as does
    \begin{equation}
        \partial_t u -\nu \partial^2_x u + \tau u = \lambda a * \sigma(u),
    \end{equation}
    given that $\left(\int a(x) \cos (k x) \dd x /\langle k\rangle^n_{\nu,\tau}\right)_{k\in\dN}$ admits unique values and $\int a(x) \sin (k x) \dd x \neq 0$.
\end{prop}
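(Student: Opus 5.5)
The plan is to obtain both statements as direct instances of Theorem~\ref{thm:BifHopfDiag}. In each case I will write the connection operator $W$ in Fourier variables and check Assumptions~\eqref{as:SCA-AssW}, \eqref{as:SCA-AssW2} and~\eqref{as:SCA-AssSigma}. I will then exhibit a Hopf exceptional index $k$ and read off the bifurcating branch. For $\sigma$ I assume a sigmoid analytic near $0$ with $\sigma'(0)\neq 0$, such as $\tanh$ or the logistic function. As noted after~\eqref{as:SCA-AssSigma}, the factor $\sigma'(0)$ is absorbed into $\lambda$, so we may take $\sigma'(0)=1$.

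\emph{First equation.} Here $n=2$ and $W=-\partial_x^2+\gamma\partial_x$, so $We_k=(k^2+i\gamma k)e_k$.
\begin{itemize}
\item $W$ is diagonal with $|\hat W_k|\lesssim\langle k\rangle^{2}$, i.e.\ of order $s=2\le n$, so~\eqref{as:SCA-AssW} holds.
\item $\hat W_0=0$, so~\eqref{as:SCA-AssW2} holds.
\item $\Real(\mathrm{c}_\ell)=f(\ell^2)$ with $f(x)=x/(\nu x^2+\tau)$, and $\Imag(\hat W_k)=\gamma k\neq0$ for every $k\ge1$ because $\gamma\neq0$.
\end{itemize}
The key computation is the injectivity defect of $f$ on $[0,\infty)$. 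For $x\neq y$,
\begin{equation*}
f(x)=f(y)\iff \nu xy=\tau .
\end{equation*}
Hence $\Real(\mathrm{c}_k)=\Real(\mathrm{c}_\ell)$ with $\ell\neq k$, $\ell\in\dN$, forces $\ell^2=\tau/(\nu k^2)$. Choose $k\in\dN$ with $\nu k^4>\tau$. Then this would force $\ell=0$, which is impossible because $\Real(\mathrm{c}_0)=0\neq\Real(\mathrm{c}_k)$. So every such $k$ is a Hopf exceptional index. Theorem~\ref{thm:BifHopfDiag} then gives a branch
\begin{equation*}
u(s)(t,x)=\frac{s}{\sqrt\pi}\cos(kx-\kappa_0 t)+o(s),\qquad \kappa_0=\frac{\gamma k}{\Real(\mathrm{c}_k)},
\end{equation*}
which is a wave travelling at leading-order speed $\kappa_0/k\neq0$.

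\emph{Second equation.} Here $n=1$ and $W=a*\cdot$, so
\begin{equation*}
\hat W_k=\int a(x)\cos(kx)\,\dd x-i\int a(x)\sin(kx)\,\dd x .
\end{equation*}
\begin{itemize}
\item Assumption~\eqref{as:SCA-AssW} follows from $a\in H^{-s}$ with $s\le n=1$, which gives $|\hat W_k|\lesssim\langle k\rangle^{s}$.
\item The hypotheses of the proposition say exactly that $\Real(\mathrm{c}_k)$ takes unique values and that $\Imag(\hat W_k)\neq0$. Any $k$ with $\Real(\mathrm{c}_k)\neq0$ is therefore a Hopf exceptional index.
\item Assumption~\eqref{as:SCA-AssW2} requires $\hat W_0=\int a=0$.
\end{itemize}

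\emph{Main obstacle.} The delicate point is precisely this last requirement, which the proposition does not state. I would first treat the zero-mean case, where the theorem applies verbatim. If $\int a\neq0$, my first attempt would be to work around a spatially homogeneous state $u^*(\lambda)$ solving $\tau u^*=\lambda\hat W_0\,\sigma(u^*)$. This can be continued analytically in $\lambda$ near $0$ by the implicit function theorem. I would then apply Theorem~\ref{thm:BifHopfDiag} to $\tilde u=u-u^*$, with the shifted nonlinearity $\tilde\sigma(v)=\sigma(u^*+v)-\sigma(u^*)$, and renormalise $\lambda$ by $\sigma'(u^*)$. The risk is that this renormalisation shifts the critical value $1/\Real(\mathrm{c}_k)$, so it has to be checked; the cleaner route is to assume $\int a=0$.

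Finally, the branch produced by the theorem is time-periodic. To see that it is a genuine travelling wave $v(x-ct)$ and not only one to leading order, I would use translation equivariance together with local uniqueness of the Hopf branch. Since $W$ commutes with translations, $u(t,x+h)$ is again a solution on the same branch up to a time shift. Uniqueness then forces $u(t,x)=v(x-ct)$ with $c=\kappa(s)/k$.
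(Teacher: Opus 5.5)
Your route matches the paper's intent. The paper gives no separate proof of this proposition; it is meant as a direct check of the hypotheses of Theorem~\ref{thm:BifHopfDiag}, which is what you do. However, the step where you pick a Hopf exceptional index for the first equation is false as written.

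From $\nu k^2\ell^2=\tau$, the condition $\nu k^4>\tau$ only gives $\ell^2<k^2$, not $\ell=0$. Take $\nu=1$, $\tau=4$, $k=2$: then $\nu k^4=16>\tau$, yet $\Real(\mathrm{c}_1)=\tfrac15=\Real(\mathrm{c}_2)$. So $k=2$ is not Hopf exceptional, and the theorem cannot be invoked at that index. The correct sufficient condition is $\nu k^2>\tau$. It gives $0<\ell^2<1$, which no integer satisfies, and $\ell=0$ is excluded anyway because $\Real(\mathrm{c}_0)=0<\Real(\mathrm{c}_k)$. In fact, $k\ge1$ is Hopf exceptional exactly when $\tau/(\nu k^2)$ is not the square of a positive integer other than $k$. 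Infinitely many $k$ satisfy $\nu k^2>\tau$, so your conclusion survives once this is corrected.

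For the second equation, you are right that Assumption~\ref{as:SCA-AssW2} forces $\int a=0$. The statement omits this and the paper does not address it. Your shift around $u^*(\lambda)$ does not, however, bring you back into the setting of Theorem~\ref{thm:BifHopfDiag}, for three reasons. First, $\tilde\sigma$ depends on $\lambda$. Second, $W e_0=\hat W_0 e_0$ is still nonzero. Third, a branch $u^*(\lambda)$ continued from $\lambda=0$ need not reach $\lambda_0\approx 1/\Real(\mathrm{c}_k)$; it can fold exactly where $\tau=\lambda\hat W_0\sigma'(u^*)$.

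A simpler observation covers your own choice $\sigma=\tanh$. When $\sigma(0)=0$, $u\equiv0$ is a solution for every $\lambda$ regardless of $\int a$. In the Hopf argument behind Theorem~\ref{thm:BifHopfDiag}, the mean of $a$ then enters only through the zero-mode eigenvalue $\tau-\lambda_0\hat W_0$ of $D_uF(0,\lambda_0)$. That eigenvalue is nonzero as soon as $\int a/\tau\neq\Real(\mathrm{c}_k)$, which the uniqueness hypothesis supplies if $\ell=0$ is in the index set. So for $\sigma(0)=0$ you can drop the zero-mean restriction by rerunning that proof rather than quoting the theorem verbatim.

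Your last paragraph, upgrading the time-periodic branch to an exact profile $v(x-ct)$, goes beyond the paper, which only reads the travelling character off the leading-order term $\cos(kx-\kappa_0 t)$. The argument via translation equivariance and uniqueness modulo phase shifts is sound. You should add that a spatial translation acts as a rotation on the two-dimensional kernel, so the translated solution carries the same amplitude parameter $s$.
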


% {\color{pink}
% \begin{prop}[Application to Neural Burst Waves Selection]
%     \label{prop:CaridacBurst}
%     Let $n \geq 1$ and suppose that $W$ a diagonal operator of order $s \leq n$, with the additional assumption that $s<0$ if $n =0$, let $\gamma >0$, and suppose that 
%     \begin{equation}
%         \Hat{W}_0 = 0, \Hat{W}_1 = 1 + \mathrm{i} \kappa_1 \text{ with }\kappa_1 \neq 0
%     \end{equation}
%     and consider the scalar dissipative neural field equation
%     \begin{equation*}
%         \partial_t u + \frac{\gamma}{2}(1+ (4\pi)^{-n}(-\Delta)^n)u= \lambda W\sigma(u).
%     \end{equation*}
%     Then, denoting $\lambda_{\min}$ the smallest positive Hopft bifurcation point different from $\lambda_1 = \gamma$ associated to the wave solution with wave number 1. We have the estimate,
%     \begin{equation}
%         \lambda_{\min} - \lambda_1 \geq \gamma\left(\frac{4^n}{(4\pi)^s\|W\|_{L^2 \to H^{-s}}} - 1\right)
%     \end{equation}
% \end{prop}}

%% file: GeneralizedLDNF.tex
In this section, we discuss a relaxed structure of the equation~\eqref{eq:AbstractModel} when allowing the linear operator $\vnu(-\Delta)^{\vn} : \dH^{2\vn} \to \dL^2$ to be generalized to a densely defined operator $A : D(A) \subset \dL^2 \to \dL^2$. Namely, we consider the equation
\begin{equation}
    \label{eq:AbstractModelGeneralized}
    \partial_t U +\vtau U + A U  = \W \sigma\left(\upalpha U + \upbeta\right) + \W^{\In}\mathcal{I} \hspace{2em} \text{ in } (0,+\infty) \times \Omega.
\end{equation}
This form corresponds to a generalized mean field limit of the \textit{gap junction} regulation mechanism $E^\ell[u]_{\mathbf{x}}$ of Section~\ref{sec:Derivation} under different biological assumptions, with $(E^\ell[u])_{1\leq \ell \leq d} \xrightarrow[N \to \infty]{} A U$. For example, preserving the \textit{local} and \textit{homotypic} intra-layer assumptions while removing the \textit{isotropic} and \textit{homogeneous} ones leads to operators of the form 
\begin{equation*}
    AU = \left((-1)^{n_1}\sum_{k,m = 0}^{n_1} \partial^{n_1-k}_1 \partial^{k}_2(a^1_{k,m}\partial^{n_1-m}_1 \partial^{m}_2 u_1), \cdots, (-1)^{n_d}\sum_{k,m = 0}^{n_d} \partial^{n_d-k}_1 \partial^{k}_2(a^d_{k,m}\partial^{n_d-m}_1 \partial^{m}_2 u_d)\right)
\end{equation*}
where $U = (u_1, \cdots, u_d)^\Trps$ and $a^\ell \colon \Omega \to \mathrm{Sym}_{n_\ell +1}(\dR) $ is a symmetric matrix field over $\Omega$.

Working with powers of the Laplacian and the associated Sobolev scale makes it possible to state the assumptions in terms of natural norms, and hence to obtain conditions expressed directly in the parameters. Nevertheless, every result above except those of Section~\ref{subsec:SCA} has an analog for the equation~\eqref{eq:AbstractModelGeneralized}, as we now explain. The assumptions are as follows. Let $\vtau > 0$ and $A : D(A) \subset \dL^2 \to \dL^2$ be a self-adjoint, densely defined sectorial operator with compact resolvent. If for some $0 \leq \alpha < 1$, we have $\|(\vtau + A)^{-\alpha} W\|_{\dL^2\to \dL^2} < +\infty$ and $\|(\vtau + A)^{-\alpha} W^\In\|_{V\to \dL^2} < +\infty$, then the equation~\eqref{eq:AbstractModelGeneralized} defines a strongly continuous parametric semigroup, see \textit{e.g.}~\cite[Thm~3.3.3]{henry2006geometric}. Denote by $\Sigma(\vtau + A) = \{0<\lambda_1\leq \cdots \leq \lambda_n\to \infty\}$ the spectrum of $(\vtau + A)$, counted with multiplicity. Suppose that, for some $0<\beta<\tfrac{1}{2}$ and for every arbitrarily large $M>0$, there exists $N \geq 1$ such that the following spectral gap holds
\begin{equation}
    \lambda_{N+1} > M
    \;\text{ and }\;
    \lambda_{N+1} - \lambda_{N} > M(\lambda_{N+1}^\beta + \lambda_{N}^\beta),
\end{equation}
that is, the analog of Lemma~\ref{lem:SpectralGap}. Then, if $\|(\vtau + A)^{-\beta} W\|_{\dL^2\to \dL^2} < +\infty$ and $\|(\vtau + A)^{-\beta} W^\In\|_{V\to \dL^2} < +\infty$, and $\Rank(W^\In) < +\infty$, the semigroup associated with the equation~\eqref{eq:AbstractModelGeneralized} admits a finite-dimensional inertial manifold for any bounded subset of $V$. In other words, this is the analog of Theorem~\ref{thm:InertialManifold} without the quantitative bound on the dimension of the inertial manifold. Finally, if $A$ is associated with an eigenbasis $\{e_n\}_{n\geq 0}$ of $\dL^2$, then the analogs of Theorem~\ref{thm:UniversalApprox} and Theorem~\ref{thm:DynUniversalApprox} hold.

%% file: ProofsAbstractResult.tex
\subsection{Well-posedness and Semigroup Theory}
We begin with a result on higher-order parabolic equations with singular source terms that serves as an \textit{a priori} estimate for the equation~\eqref{eq:AbstractModel}.

\begin{lem}
    \label{lem:LinearEqAbs}
    Suppose that $\vn = (n_1, \cdots, n_d) \in \dN^d$ with  $\vn \geq 0$, $\vnu > 0$, $\vtau > 0$, $\vn \geq \vs$. For any initial condition $U^0 \in \dL^2$ and any source term $G \in L^2_{t,loc}(\dH^{-\vs})$ there exists a unique weak solution to the equation
    \begin{equation}
        \label{eq:LinearEqAbs}
        \partial_t U + (\vtau + \vnu (-\Delta)^{\vn}) U = G
    \end{equation}
    satisfying
    \begin{equation*}
        U \in C_t (\dL^2) \cap L^2_{t,loc}(\dH^{\vn}), \text{ with } \partial_t U \in L^2_{t,loc}(\dH^{-\vn}),
    \end{equation*}
    such that $U(0) = U^0$. Furthermore, any solution satisfies in the distributional sense
    \begin{equation}
        \label{est:LinearEqAbs}
        \frac{\dd}{\dd t} \int |U|^2 \dd x + \nu_{\min}\int |\nabla^{\vn}U|^2 \dd x + \tau_{\min}\int |U|^2 \dd x \leq c_1(\vn, \vtau, \vnu) \|G\|^2_{\dH^{-\vs}},
    \end{equation}
    with $|\nabla^{\vn} U|^2 := \sum_{\ell = 1}^d |\nabla^{n_\ell} u_\ell|^2$, where $|\nabla^{k} u|$ denotes the Frobenius norm, and with $c_1(\vn, \vtau, \vnu) := 2^{(n_{\max}-1)} (\tau_{\min}\wedge \nu_{\min})^{-1}$, or in integral form,
    {\small \begin{equation}
        \label{est:LinearEqAbsInt}
        \left(\int|U(t)|^2 \dd x - \int|U^0|^2 \dd x\right) + \nu_{\min} \int^t_0\int  |\nabla^{\vn} U|^2\dd x +  \tau_{\min} \int^t_0\int |U|^2 \leq c_1(\vn, \vtau, \vnu) \int_0^t\|G(s)\|^2_{\dH^{-\vs}} \dd s.
    \end{equation}}
\end{lem}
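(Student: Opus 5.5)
The equation decouples mode by mode, since $\vtau + \vnu(-\Delta)^{\vn}$ is diagonal in the Fourier basis. I would therefore prove existence and uniqueness either by an explicit Fourier representation or by a Galerkin scheme. I then derive the energy estimate~\eqref{est:LinearEqAbs} by testing against $U$, using the Lions--Magenes lemma to justify the time derivative. Concretely, for each layer $\ell$ and each $\xi \in \dZ^2$, set $\mu_\ell(\xi) := \tau_\ell + \nu_\ell (4\pi^2|\xi|^2)^{n_\ell}$ and define
\begin{equation*}
    \hat u_\ell(t,\xi) = e^{-\mu_\ell(\xi) t}\hat u^0_\ell(\xi) + \int_0^t e^{-\mu_\ell(\xi)(t-r)}\hat G_\ell(r,\xi)\,\dd r .
\end{equation*}
This is equivalent to a Galerkin approximation on $\P_N\dL^2$ with $N\to\infty$. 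The a priori bound derived below, applied to the truncations, gives uniform bounds in $L^\infty_t(\dL^2)\cap L^2_t(\dH^{\vn})$. Reading $\partial_t U$ off the equation then gives $\partial_t U = G - (\vtau+\vnu(-\Delta)^{\vn})U \in L^2_{t,loc}(\dH^{-\vn})$, where we use $\dH^{-\vs}\hookrightarrow \dH^{-\vn}$ because $\vs\leq\vn$. The Lions--Magenes lemma for the Gelfand triple $\dH^{\vn}\subset\dL^2\subset\dH^{-\vn}$ then yields $U\in C_t(\dL^2)$ and
\begin{equation*}
    \frac{\dd}{\dd t}\|U\|_{\dL^2}^2 = 2\langle \partial_t U, U\rangle_{\dH^{-\vn},\dH^{\vn}} .
\end{equation*}
Uniqueness follows by applying the same identity with $G=0$ to the difference of two solutions, which gives $\|U(t)\|^2\le e^{-2\tau_{\min}t}\|U(0)\|^2=0$.

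For the estimate, test the equation with $U$. This gives
\begin{equation*}
    \frac{\dd}{\dd t}\|U\|^2 + 2\sum_\ell \tau_\ell\|u_\ell\|_{L^2}^2 + 2\sum_\ell \nu_\ell \|(-\Delta)^{n_\ell/2}u_\ell\|_{L^2}^2 = 2\langle G, U\rangle .
\end{equation*}
On the torus, integration by parts gives $\|(-\Delta)^{n/2}u\|^2 = \int|\nabla^n u|^2$ with the Frobenius norm; in Fourier variables this is the identity $|\xi|^{2n} = \sum_{|\alpha|=n}\binom{n}{\alpha}\xi^{2\alpha}$, matching the sum over ordered index tuples. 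The right-hand side is handled by duality. Since $s_\ell\le n_\ell$,
\begin{equation*}
    |\langle g_\ell,u_\ell\rangle| \le \|g_\ell\|_{H^{-s_\ell}}\|u_\ell\|_{H^{s_\ell}} \le \|g_\ell\|_{H^{-s_\ell}}\|u_\ell\|_{H^{n_\ell}} .
\end{equation*}
We then bound the $H^{n_\ell}$ norm by the dissipation:
\begin{equation*}
    (1+4\pi^2|\xi|^2)^{n_\ell}\le 2^{n_\ell-1}\bigl(1+(4\pi^2|\xi|^2)^{n_\ell}\bigr),
\end{equation*}
by convexity of $x\mapsto x^{n}$. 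This gives $\|u_\ell\|_{H^{n_\ell}}^2\le 2^{n_{\max}-1}(\|u_\ell\|^2+\|\nabla^{n_\ell}u_\ell\|^2)$, and hence
\begin{equation*}
    \|u_\ell\|_{H^{n_\ell}}^2 \le 2^{n_{\max}-1}(\tau_{\min}\wedge\nu_{\min})^{-1}\bigl(\tau_\ell\|u_\ell\|^2+\nu_\ell\|\nabla^{n_\ell}u_\ell\|^2\bigr).
\end{equation*}

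Applying Young's inequality $2ab\le c_1a^2 + c_1^{-1}b^2$ with $c_1 = 2^{n_{\max}-1}(\tau_{\min}\wedge\nu_{\min})^{-1}$, the $b^2$-term absorbs at most one copy of the dissipation $\sum_\ell(\tau_\ell\|u_\ell\|^2+\nu_\ell\|\nabla^{n_\ell}u_\ell\|^2)$. This leaves $\nu_{\min}\int|\nabla^{\vn}U|^2+\tau_{\min}\int|U|^2$ on the left and $c_1\|G\|^2_{\dH^{-\vs}}$ on the right, which is~\eqref{est:LinearEqAbs}. Integrating in time, which is legitimate because $t\mapsto\|U(t)\|^2$ is absolutely continuous by Lions--Magenes, gives~\eqref{est:LinearEqAbsInt}.

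The only delicate step is making the energy identity rigorous at the stated regularity: $G$ lies only in $L^2_t(\dH^{-\vs})$, so $U$ cannot be tested classically. I would handle this exactly as described, by performing the computation on the Galerkin truncations, where everything is finite-dimensional, and passing to the limit by weak lower semicontinuity. Alternatively, one can apply Lions--Magenes directly to the constructed solution. The degenerate case $n_\ell=0$ needs no separate treatment, since then $s_\ell\le0$ and the same argument goes through.
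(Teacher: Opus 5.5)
Your proposal is correct and follows essentially the same route as the paper: a Fourier--Galerkin construction, and an energy estimate obtained by testing with $U$. The estimate uses the $\dH^{\vs}$--$\dH^{-\vs}$ duality, bounds the Sobolev norm by the dissipation with the factor $2^{n_{\max}-1}(\tau_{\min}\wedge\nu_{\min})^{-1}$, and closes with Young's inequality; continuity in time and uniqueness come from the Lions--Magenes lemma. Your explicit Duhamel formula in Fourier variables and your layer-by-layer bookkeeping of $\tau_\ell,\nu_\ell$ are only cosmetic variants, and they yield exactly the stated constant $c_1$.
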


\begin{proof}
    The existence follows from the classical Fourier--Galerkin approximation together with the \textit{a priori} estimate~\eqref{est:LinearEqAbs}. We first derive this estimate for smooth solutions. If $U$ is a regular solution of the equation~\eqref{eq:LinearEqAbs}, then it satisfies the energy estimate
    \begin{equation*}
        \frac{\dd}{\dd t} \int \frac{|U|^2}{2} \dd x + \int U \vnu (-\Delta)^{\vn} U \dd x + \int \left|\sqrt{\vtau} U\right|^2 \dd x = \int U \cdot G \dd x.
    \end{equation*}
    Integrating by parts in the second term and using the $\dH^{\vs}-\dH^{-\vs}$ duality in the last term, we obtain
    \begin{equation*}
        \frac{\dd}{\dd t} \int \frac{|U|^2}{2} \dd x + \nu_{\min} \int |\nabla^{\vn}U|^2 \dd x + \tau_{\min} \int |U|^2 \dd x \leq \|U\|_{\dH^{\vs}}\|G\|_{\dH^{-\vs}}.
    \end{equation*}

    For all $U \in \dH^{2\vn}$, with $\vs\leq \vn$, we can derive from the Parseval identity the estimate
    \begin{equation}
        \label{est:H2tau2nunablan2}
        \|U\|^2_{\dH^{\vs}} \leq 2^{(n_{\max}-1)}(\tau^{-1}_{\min}\vee \nu^{-1}_{\min}) \left(\nu_{\min} \int |\nabla^{\vn}U|^2 \dd x + \tau_{\min} \int |U|^2 \dd x \right).
    \end{equation}

    Using this in the previous estimate together with Young's product inequality and rearranging the terms, we obtain estimate~\eqref{est:LinearEqAbs}.
    
    We now apply this estimate to the Fourier--Galerkin approximations. Consider the approximate problems obtained by truncating the Fourier expansion of the source term $G$ as $G_N$, and let $U^N$ be the corresponding regular solutions. The sequence of solutions satisfies the previous estimate uniformly, so that the Banach--Alaoglu theorem yields a limit $U$, weak-$*$ in $L^\infty_{t,loc}(\dL^2)$ and weak in $L^2_{t,loc}(\dH^{\vn})$, satisfying estimate~\eqref{est:LinearEqAbsInt}.
    
    The uniqueness follows from the same energy estimate, as the difference of two solutions solves the homogeneous equation. By the Lions--Magenes lemma, the energy identity remains valid for this weak solution, and the previous estimate applied with $G\equiv0$ implies that the solutions coincide in $C_t(\dL^2)$.
\end{proof}

We consider in this section the equation
\begin{equation}
    \label{eq:AbstractModelFin}
    \partial_t \mathrm{U} + (\vtau +\vnu (-\Delta)^{\vn})\mathrm{U} = \W \sigma\left(\upalpha \mathrm{U} + \upbeta\right) + F_{\In} \hspace{2em} \text{ in } (0,+\infty) \times \Omega.
\end{equation}
with $F_{\In} \in \W^{\In} V \subset \dH^{-\vs}$.

\begin{lem}
    \label{lem:LipNBoundedR}
    We define the nonlinear operator $R : \dL^2 \times \W^\In V \to \dH^{-\vs}$ as
    \begin{equation}
        \label{def:Rdefinition}
        R(U, F_\In) = W\sigma(\upalpha U + \upbeta) + F_\In.
    \end{equation}
    Then $R$ satisfies the following estimates:
    \begin{equation}
        \label{est:LipschitzR}
        \|R(U_1, F_1)-R(U_2, F_2)\|_{\dH^{-\vs}} \leq L_1 \|U_1-U_2\|_{\dL^2} + \|F_1-F_2\|_{\dH^{-\vs}},
    \end{equation}
    for all $U_1, U_2 \in \dL^2$, $F_1, F_2 \in \W^\In V$ with $L_1 = \|W\|_{\dL^2\to\dH^{-\vs}}\|\sigma'\|_{\infty}\|\upalpha\|_{\dL^\infty}$ and
    \begin{equation}
        \|R(U,F_\In)\|_{\dH^{-\vs}} \leq \|W\|_{\dL^2\to \dH^{-\vs}}\|\sigma\|_{\infty} +\|F_\In\|_{\dH^{-\vs}}
    \end{equation}
    for all $U\in \dL^2, F_\In \in \W^\In V$.
\end{lem}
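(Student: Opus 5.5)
The statement is elementary: both bounds follow from the layer-wise structure of $\upsigma$ and from the boundedness of $\W$ as an operator $\dL^2\to\dH^{-\vs}$. The plan is to reduce everything to pointwise estimates on $\sigma$ and then apply the operator norm of $\W$.

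\emph{Lipschitz estimate~\eqref{est:LipschitzR}.} By linearity of $\W$ we write
\begin{equation*}
R(U_1,F_1)-R(U_2,F_2) = \W\bigl(\upsigma(\upalpha U_1+\upbeta)-\upsigma(\upalpha U_2+\upbeta)\bigr) + (F_1-F_2).
\end{equation*}
The triangle inequality in $\dH^{-\vs}$ isolates the term $\|F_1-F_2\|_{\dH^{-\vs}}$. The first term is bounded by $\|\W\|_{\dL^2\to\dH^{-\vs}}$ times the $\dL^2$ norm of the difference of the sigmoid outputs.

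For that difference we argue pointwise in $x$ and layer by layer. Since $\sigma$ is Lipschitz, it is differentiable almost everywhere and its Lipschitz constant equals $\|\sigma'\|_\infty$. Hence
\begin{equation*}
|\sigma(\alpha_\ell u_{1,\ell}+\beta_\ell)-\sigma(\alpha_\ell u_{2,\ell}+\beta_\ell)| \le \|\sigma'\|_\infty |\alpha_\ell(x)|\,|u_{1,\ell}-u_{2,\ell}|,
\end{equation*}
and the $\beta_\ell$ terms cancel inside the argument. Squaring, integrating over $\Omega$ and summing over $\ell$ gives
\begin{equation*}
\|\upsigma(\upalpha U_1+\upbeta)-\upsigma(\upalpha U_2+\upbeta)\|_{\dL^2}\le \|\sigma'\|_\infty\|\upalpha\|_{\dL^\infty}\|U_1-U_2\|_{\dL^2}.
\end{equation*}
Here $\|\upalpha\|_{\dL^\infty}$ is read as $\max_\ell\|\alpha_\ell\|_{L^\infty}$, or as any norm dominating it.

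\emph{Boundedness estimate.} Again by the triangle inequality, $\|R(U,F_\In)\|_{\dH^{-\vs}}\le \|\W\|\,\|\upsigma(\upalpha U+\upbeta)\|_{\dL^2}+\|F_\In\|_{\dH^{-\vs}}$. Since $\sigma$ is bounded, $|\sigma(\alpha_\ell u_\ell+\beta_\ell)|\le\|\sigma\|_\infty$ pointwise. The torus has unit measure, so each layer contributes at most $\|\sigma\|_\infty^2$ to the squared $\dL^2$ norm.

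The only point needing care is this last normalization. Summing over the $d$ layers literally gives the factor $\sqrt d\,\|\sigma\|_\infty$. One reads $\|\sigma\|_\infty$ here as the sup norm of the vector map $\upsigma$ in the $\dR^d$ Euclidean norm, which absorbs the $\sqrt d$; alternatively one simply carries the factor $\sqrt d$, which is harmless for the later use of the lemma. Finally, measurability of the compositions $\sigma(\alpha_\ell u_\ell+\beta_\ell)$ holds because $\sigma$ is continuous, so the $\dL^2$ norms above are well defined.
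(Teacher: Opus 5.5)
Your argument is correct and is the routine verification the paper has in mind; the paper states this lemma without proof and treats it as immediate. The steps are the triangle inequality, the pointwise Lipschitz bound for $\sigma$ with the $\upbeta$ terms cancelling, and the operator norm of $\W$. Your caveat on the second estimate is also right: read with the scalar sup norm, the bound fails for $d\ge 2$. For instance, take $\vs=\mathbf{0}$, $\W=I$, $U=0$, $\upbeta=0$, $F_\In=0$ and a $\sigma$ attaining its supremum at $0$; the left side is then $\sqrt{d}\,\|\sigma\|_\infty$, so the factor $\sqrt d$ must be carried or absorbed as you describe.
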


\begin{lem}
    \label{lem:FractionalResolventEstimateA}
    %We have the following estimate of the fractional resolvents of $(\vtau + \vnu (-\Delta)^\vn) : \dH^{\mathrm{m}+2} \to \dH^{\mathrm{m}+2}$ PHOTO 14 Aout
    Let $\vtau >0$, $\vnu >0$, $\vn \geq 0$ and $\alpha \geq 0$ be such that $\vs \leq 2\alpha \vn$. Then
    \begin{equation}
        \label{est:FractionalResolventEstimateA}
        \|(\vtau + \vnu (-\Delta)^\vn)^{-\alpha}\|_{\dH^{-\vs}\to \dL^2} \leq 2^{\alpha(n_{\max}-1)}(\tau_{\min} \wedge\nu_{\min} )^{-\alpha}.
    \end{equation}
\end{lem}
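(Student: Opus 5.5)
The operator $(\vtau + \vnu(-\Delta)^\vn)$ acts diagonally, layer by layer and Fourier mode by Fourier mode. The plan is to reduce \eqref{est:FractionalResolventEstimateA} to a pointwise inequality on Fourier multipliers and check it mode by mode via the Parseval identity. On layer $\ell$ and at mode $\xi\in\dZ^2$, the operator multiplies $\hat u_\ell(\xi)$ by $\lambda_\ell(\xi) := \tau_\ell + \nu_\ell (4\pi^2|\xi|^2)^{n_\ell}$. Hence $(\vtau + \vnu(-\Delta)^\vn)^{-\alpha}$ multiplies the same coefficient by $\lambda_\ell(\xi)^{-\alpha}$. For $U\in\dH^{-\vs}$ we then have
\begin{equation*}
\|(\vtau + \vnu (-\Delta)^\vn)^{-\alpha}U\|_{\dL^2}^2 = \sum_{\ell=1}^d\sum_{\xi\in\dZ^2} \lambda_\ell(\xi)^{-2\alpha}(1+4\pi^2|\xi|^2)^{s_\ell}\,(1+4\pi^2|\xi|^2)^{-s_\ell}|\hat u_\ell(\xi)|^2 .
\end{equation*}
It therefore suffices to show, uniformly in $\ell$ and $\xi$,
\begin{equation*}
(1+4\pi^2|\xi|^2)^{s_\ell}\,\lambda_\ell(\xi)^{-2\alpha} \leq 2^{2\alpha(n_{\max}-1)}(\tau_{\min}\wedge\nu_{\min})^{-2\alpha}.
\end{equation*}
Taking square roots then gives the claimed operator norm bound.

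To prove the multiplier inequality, write $x = 4\pi^2|\xi|^2\geq 0$ and $n=n_\ell$, and first bound $\lambda_\ell$ from below by $(\tau_{\min}\wedge\nu_{\min})(1+x^{n})$. The elementary convexity inequality $(1+x)^n \leq 2^{n-1}(1+x^n)$, valid for $n\geq 1$, then gives
\[
\lambda_\ell(\xi)\geq (\tau_{\min}\wedge\nu_{\min})\,2^{-(n-1)}(1+x)^{n} \geq (\tau_{\min}\wedge\nu_{\min})\,2^{-(n_{\max}-1)}(1+x)^{n}.
\]
This is the same computation that produced \eqref{est:H2tau2nunablan2}. Raising to the power $-2\alpha$ yields
\[
(1+x)^{s_\ell}\lambda_\ell^{-2\alpha}\leq 2^{2\alpha(n_{\max}-1)}(\tau_{\min}\wedge\nu_{\min})^{-2\alpha}(1+x)^{s_\ell-2\alpha n_\ell}.
\]
The hypothesis $\vs\leq 2\alpha\vn$ makes the last exponent nonpositive, and since $1+x\geq1$ that factor is at most $1$.

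The only delicate point is the degenerate case $n_\ell = 0$, where the convexity inequality fails as stated. There the hypothesis forces $s_\ell\leq 0$ and $\lambda_\ell=\tau_\ell+\nu_\ell\geq \tau_{\min}\wedge\nu_{\min}$. The bound then holds directly with constant $(\tau_{\min}\wedge\nu_{\min})^{-2\alpha}$, which is at most the stated one since $n_{\max}\geq 0$. A possible wrinkle is that $n_{\max}-1$ could be negative if all $n_\ell=0$. That case is excluded in every application ($\vn\geq 2$), and otherwise I would simply treat the constant as $2^{\alpha(n_{\max}-1)_+}$. Apart from this, no real obstacle is expected: the lemma is a direct Fourier-multiplier computation, and the main care is in tracking the constant $2^{n-1}$ from the power-mean inequality.
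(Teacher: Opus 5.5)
Your proof is correct and is essentially the paper's: the same Fourier-multiplier reduction, the same pointwise bound $(\tau_\ell+\nu_\ell X^{n_\ell})^{-2\alpha}\leq 2^{2\alpha(n_{\max}-1)}(\tau_{\min}\wedge\nu_{\min})^{-2\alpha}(1+X)^{-2\alpha n_\ell}$, and the same use of $s_\ell\leq 2\alpha n_\ell$ to bound the remaining factor by $1$. One remark: the case $n_\ell=0$ needs neither special treatment nor a modified constant, because there $\lambda_\ell=\tau_\ell+\nu_\ell\geq 2(\tau_{\min}\wedge\nu_{\min})$ (equivalently, $(1+x)^0\leq 2^{-1}(1+x^0)$ holds with equality), which gives exactly the stated constant $2^{2\alpha(n_{\max}-1)}$ even when $n_{\max}=0$; your justification ``since $n_{\max}\geq 0$'' would in fact require $n_{\max}\geq 1$.
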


\begin{proof}
    Using the Fourier representation of the fractional power of the operator explicitly in the definition of the $\dH^{-\vs}$ norm, we obtain
    \begin{equation*}
        \|(\vtau + \vnu (-\Delta)^\vn)^{-\alpha} U\|_{\dL^2}^2 = \sum_{\ell = 1}^d \sum_{\xi \in \dZ^2} (\tau_\ell + \nu_\ell (4\pi^2|\xi|^2)^{n_\ell})^{-2\alpha}|\hat{u}_\ell(\xi)|^2.
    \end{equation*}
    Using the inequality
    \begin{equation*}
        (\tau_\ell + \nu_\ell X^{n_\ell})^{-2\alpha} \leq 2^{2\alpha(n_{\max}-1)}(\tau_{\min}\wedge\nu_{\min})^{-2\alpha}(1+X)^{-2\alpha n_\ell} \text{ for all } X\geq 0,
    \end{equation*}
    using that $s_\ell \leq 2\alpha n_\ell$, we obtain the bound
    \begin{equation*}
        \sup_{\xi\in \dZ^2} (\tau_\ell + \nu_\ell (4\pi^2|\xi|^2)^{n_\ell})^{-2\alpha} (1+4\pi^2|\xi|^2)^{s_\ell} \leq 2^{2\alpha(n_{\max}-1)}(\tau_{\min}\wedge\nu_{\min})^{-2\alpha}.
    \end{equation*}
    Using the previous estimate in the norm identity, we obtain
    \begin{align*}
        \|(\vtau + \vnu (-\Delta)^\vn)^{-\alpha} U\|_{\dL^2}^2 &= \sum_{\ell = 1}^d \sum_{\xi \in \dZ^2} (\tau_\ell + \nu_\ell (4\pi^2|\xi|^2)^{n_\ell})^{-2\alpha}|\hat{u}_\ell(\xi)|^2, \\
        &\leq 2^{2\alpha(n_{\max}-1)}(\tau_{\min}\wedge\nu_{\min})^{-2\alpha} \sum_{\ell = 1}^d \sum_{\xi \in \dZ^2} (1 + 4\pi^2|\xi|^2)^{-s_\ell}|\hat{u}_\ell(\xi)|^2,\\
        &= 2^{2\alpha(n_{\max}-1)}(\tau_{\min}\wedge\nu_{\min})^{-2\alpha} \|U\|_{\dH^{-\vs}}^2.
    \end{align*}
    Taking the square-root on both sides leads to the stated estimate.
    
\end{proof}

We now prove Theorem~\ref{thm:WPSemigroup}.

\begin{proof}[Proof of Theorem~\ref{thm:WPSemigroup}]
    The existence and uniqueness for short time follows from the Banach fixed point theorem in the Banach space $(X_T, \|\cdot\|_{X_T}) = (C([0,T], \dL^2), \sup_{t\in [0,T]}\|\cdot_t \|_{\dL^2})$, for some $T>0$ to be fixed. Define the mapping $\mathcal{F} \colon X_T \to X_T $ as $\mathcal{F}(U)$ is the unique solution of the equation~\eqref{eq:LinearEqAbs}, given by Lemma~\ref{lem:LinearEqAbs}, with source term $G = R(U,W^\In \I) \in C([0,T], \dH^{-\vs})$. We now show that the mapping is a strict contraction for $T>0$ sufficiently small. Take two elements $U^i \in X_T$, $i = 1,2$, and set $Z^i = \mathcal{F}(U^i)$. The difference $\overline{Z} = Z^1 -Z^2$ is a solution of the equation~\eqref{eq:LinearEqAbs} with source term $\overline{G} = R(U^1,F)- R(U^2, F)$. Then by Lemma~\ref{lem:LipNBoundedR}
    \begin{align*}
        \|\overline{G}\|_{\dH^{-\vs}} &\leq L_1 \|U^1- U^2\|_{\dL^2}.
    \end{align*}
    From the estimate~\eqref{est:LinearEqAbs} of Lemma~\ref{lem:LinearEqAbs}, for $\overline{Z}$, using the previous estimate and Gr\"onwall's lemma, we obtain
    \begin{equation*}
        \sup_{t\in [0,T]} \|Z^1(t)-Z^2(t)\|_{\dL^2} \leq C_1 T^{1/2} \sup_{t\in [0,T]} \|U^1(t)-U^2(t)\|_{\dL^2},
    \end{equation*}
    where $C_1 > 0$ is some constant independent of $U^1$ and $U^2$. Choosing $T =(2C_1)^{-2}$\ gives that $\mathcal{F}$ is a strict contraction on $X_T$. Thus the equation~\eqref{eq:AbstractModel} admits a unique solution on $[0,T]$. The global existence follows from a continuation argument, as the solution satisfies the no-blow-up estimate
    \begin{equation*}
        \sup_{t\in[0,T]} \|U(t)\|_{\dL^2} \leq \|U^0\|_{\dL^2} + C(T) ( \|W\|_{\dL^2\to \dH^{-\vs}}\|\sigma\|_{\infty} +\|W^\In\I\|_{\dH^{-\vs}}).
    \end{equation*}

    This ensures the existence of a parametric semigroup $\Psi\colon\dR_+ \times \dL^2 \times V \to \dL^2$; it remains to show that it is continuous. Take a sequence $(t^N, U^{0,N}, \I^N) \in \dR^+ \times \dL^2\times V$ converging to $(t, U^{0}, \I)$. For each $N\geq 1$, denote by $U^N$ the solution with input $\I^N$ and initial condition $U^{0,N}$, and by $U$ the solution with input $\I$ and initial condition $U^0$. Then the difference $\overline{U}^N = U-U^{N}$ is a solution of the equation~\eqref{eq:LinearEqAbs} with source term $\overline{G}^N = W \upsigma (\upalpha U + \upbeta) - W \upsigma (\upalpha U^N + \upbeta) + W^\In(\I - \I^N)$. Using the same Lipschitz estimation as for the existence result, from estimate~\eqref{est:LinearEqAbs} and Gr\"onwall's lemma, we obtain that
    \begin{equation*}
        \sup_{t\in [0,T]} \|U(t) -U^{N}(t)\|_{\dL^2} \leq C_{2} (\|W^\In\|_{V\to \dH^{-\vs}}\|\I-\I^N\|_{V} + \|U^0 -U^{0,N}\|_{\dL^2}),
    \end{equation*}
    for some constant $C_2>0$ independent of $\I,\I^N,U^0,U^{0,N}$. We conclude that the semigroup is continuous by noting that
    \begin{equation*}
        \begin{split}
            \|\Psi(t^N;U^{0,N}, \I^N) - \Psi(t;U^{0}, \I)\|_{\dL^2} &\leq  \|\Psi(t^N;U^{0}, \I) - \Psi(t;U^{0}, 
            \I)\|_{\dL^2}\\
            &\hspace{1em} + \|\Psi(t^N;U^{0,N}, \I^N) - \Psi(t^N;U^{0}, \I)\|_{\dL^2} 
        \end{split}
    \end{equation*}
    the first term converges to zero from the continuity in time of the solution and the second from the uniform continuity on bounded time intervals of the solution with respect to the input and the initial data, following estimate~\eqref{est:LinearEqAbs} on the difference of two solutions and Gr\"onwall's lemma. 
\end{proof}

\subsection{Existence of the Inertial Manifold}

\input{InertialManifold}

\subsection{Universal Approximation Theorems}
% In this section, we construct a family of exponentially contractive equations with stationary solutions that approximate arbitrarily well any one-layer artificial neural network; the compactness assumptions then allows us to apply the classical universal approximation theorem to obtain the required result.

\begin{proof}[Proof of Theorem~\ref{thm:UniversalApprox}]
    Let $\varepsilon>0$, as $K$ is compact in $V$, there exist $\rho_K > 0$ such that $K \subset B_V(0,\rho_K)$ and $N_0 \geq 1$ with linearly independent vectors $v_1, \cdots, v_{N_0} \in K$ such that  the finite-dimensional space $E_{N_0} = \Span\{v_1, \cdots, v_{N_0}\}$ satisfies $\dist(K,E_{N_0} \cap B_V(0,\rho_K)) \leq \frac{\varepsilon}{4}$. We consider the projection operator $P_{N_0} \colon V \to V$ onto $E_{N_0}$ and the coordinate isomorphism $\Pi_{N_0} \colon \dR^{N_0} \to E_{N_0}$ and its associated coordinate chart $\Pi^{-1}_{N_0} \colon  E_{N_0} \to \dR^{N_0}$, that we suppose to be normalized. Then, using the uniform continuity of $F$ and that $F(E_{N_0} \cap B_V(0,\rho_K))$ is compact in $L^2$, there exist $N_1 \geq 0$, $g_1, \cdots, g_{N_1} \in L^2$ and $f^{N_0,N_1} \colon \dR^{N_0} \to \dR^{N_1}$, that we denote as $f^{N_0,N_1}(x) =  (f^{N_0,N_1}_1(x), \cdots, f^{N_0,N_1}_{N_1}(x))^\Trps$, such that
    \begin{equation}
        \label{est:ApproximationSatic0to1}
        \sup_{\I \in K} \|F(\mathcal{I}) - F_{N_0,N_1}(\I)\|_{L^2} \leq \frac{\varepsilon}{2},
    \end{equation}
    where $F_{N_0,N_1}(\mathcal{I}) = \sum_{n = 1}^{N_1} g_n f^{N_0,N_1}_n(\Pi_{N_0}^{-1} P_{N_0}\I)$. Furthermore, we can assume that $\|g_n\|_{L^2} = 1$ for all $n = 1, \cdots, N_1$ and using the density of $H^{2n_2}$ in $L^2$, we assume that $g_1, \cdots, g_{N_1} \in H^{2n_2}$.

    Applying the Universal Approximation theorem, see \textit{e.g.}~\cite{cybenko1989approximation}, there exist $M\geq 1$, $A \in \mathcal{M}_{M,N_1}(\dR)$, $B \in \mathcal{M}_{N_0,M}(\dR)$, $\theta \in \dR^{M}$, such that
    \begin{equation*}
        \sup_{x\in B(0,\rho_K)} \|f^{N_0,N_1}(x)-f^{N_0,M,N_1}(x)\| \leq \frac{\varepsilon}{2N_1},
    \end{equation*}
    where $f^{N_0,M,N_1}_n(x) = \sum_{m=1}^M A_{m,n}\sigma ( \sum_{k = 1}^{N_0} B_{k,m} x_k+ \theta_m)$. This implies that
    \begin{equation}
        \label{est:ApproximationSatic1to2}
        \sup_{\mathcal{I}\in K} \|F_{N_0,N_1}(\I)-F_{N_0,M,N_1}(\I)\|_{L^2} \leq \frac{\varepsilon}{2},
    \end{equation}
    where $F_{N_0,M,N_1}(\I) =\sum_{n = 1}^{N_1} g_n f^{N_0,M,N_1}_n(\Pi_{N_0}^{-1} P_{N_0} \I)$.

    In order to construct the equation, we introduce the following functional lifting. Let $(\varphi_m)_{1\leq m \leq M} \in H^{2n_1}(\Omega)$ be a family of smooth bump functions associated with disjoint rectangular subsets $(\Omega_m)_{1\leq m \leq M}$ of $\Omega$, with $|\Omega_m| > 0 $ for all $m = 1, \cdots, M$, such that
    \begin{equation*}
        \varphi_m \equiv \frac{1}{|\Omega_m|} \delta_{m\ell} \text{ on } \Omega_\ell.
    \end{equation*}
    That is, $\varphi_m$ is equal to the inverse of the volume of $\Omega_m$ on $\Omega_m$ and zero on the other domains $\Omega_\ell$. We define $\beta_1 = \sum_{m=1}^M \varphi_m \theta_m \in L^2$ and the following auxiliary linear operators
    \begin{align*}
        \widetilde{W}^\In_1 = \sum_{m=1}^M \sum_{k=1}^{N_0} \varphi_m B_{km} (\Pi_{N_0}^{-1} P_{N_0} \cdot)_k \colon V \to H^{2n_1},\;
        \widetilde{W}_{21} = \sum_{n=1}^{N_1} \sum_{m=1}^{M} g_n A_{mn} \langle \Indic_{\Omega_m} , \cdot \rangle_{L^2} \colon L^2 \to H^{2n_2},
    \end{align*}
    such that by construction
    \begin{equation*}
        F_{N_0,M,N_1} = \widetilde{W}_{21}\sigma(\widetilde{W}^\In_1 \cdot + \beta_1).
    \end{equation*}
    Using the regularity of the auxiliary operators, we define the connection operators
    \begin{equation*}
        W^\In_1 = (\tau_1 + \nu_1(-\Delta)^{n_1})\widetilde{W}^\In_1 \colon V \to L^2
    \end{equation*}
    and
    \begin{equation*}
        W_{21} = (\tau_2 + \nu_2(-\Delta)^{n_2})\widetilde{W}_{21}  \colon L^2 \to L^2,
    \end{equation*}
    and consider the two-layer dissipative neural field equation

    \begin{equation}
        \label{eq:UnivAppStaticLapproxInproof}
        \begin{cases}
            \partial_t u_1 + (\tau_1 + \nu_1(-\Delta)^{n_1})u_1 &= W^\In_1 \mathcal{I},\\
            \partial_t u_2 + (\tau_2 + \nu_2(-\Delta)^{n_2})u_2 &= W_{21}\sigma(u_1 + \beta_1).
        \end{cases}
    \end{equation}

    We have that for any $\mathcal{I} \in V$, $(\widetilde{W}^{\In}_1 \mathcal{I}, F_{N_0,M,N_1}(\mathcal{I}))^\Trps := (u^\infty_1, u^\infty_2)^\Trps$ is a stationary solution of the equation~\eqref{eq:UnivAppStaticLapproxInproof}.  Consider a solution $U(t; U^0, \mathcal{I}):= (u_1(t), u_2(t))^\Trps$ of the same equation with initial condition $U^0$ and input $\mathcal{I} \in B(0,\rho_K)$. For some $\kappa>0$ to be fixed later, $\overline{U}(t) = (u_1(t) - u^\infty_1 , \sqrt{\kappa}(u_2(t) -u^\infty_2))$ is a solution of the equation

    \begin{equation*}
        \begin{cases}
            \partial_t \Bar{u}_1 + (\tau_1 + \nu_1(-\Delta)^{n_1})\Bar{u}_1 &=0,\\
            \partial_t \Bar{u}_2 + (\tau_2 + \nu_2(-\Delta)^{n_2})\Bar{u}_2 &= \sqrt{\kappa}(W_{21}\sigma(u_1 + \beta_1) - W_{21}\sigma(u^\infty_1 + \beta_1)).
        \end{cases}
    \end{equation*}

    Applying estimate~\eqref{est:LinearEqAbs} of Lemma~\ref{lem:LinearEqAbs}, we obtain
    \begin{equation*}
        \begin{split}
            \frac{\dd}{\dd t} \int |\overline{U}|^2\dd x + \tau_{\min}\int |\overline{U}|^2\dd x &\leq \kappa \mathrm{c}_1 \|W_{21}\|^2_{L^2 \to L^2} \|\sigma'\|_{\infty}^2 \int |u_1- u^\infty_1|^2\dd x, \\
            &\leq \kappa \mathrm{c}_1 \|W_{21}\|^2_{L^2 \to L^2} \|\sigma'\|_{\infty}^2 \int |\overline{U}|^2\dd x,
        \end{split}
    \end{equation*}
    for some constant $\mathrm{c}_1 > 0$. Taking $\kappa = \frac{\tau_{\min}}{2}(\mathrm{c}_1 \|W_{21}\|^2_{L^2 \to L^2} \|\sigma'\|_{\infty}^2 \vee 1 )^{-1}$, we obtain
    \begin{equation*}
        \frac{\dd}{\dd t} \int |\overline{U}|^2\dd x + \frac{\tau_{\min}}{2} \int |\overline{U}|^2\dd x \leq 0,
    \end{equation*}
    so that
    \begin{equation*}
        \begin{split}
            \|u_2(t) - F_{N_0,M,N_1}(\I)\|^2_{L^2} &\leq \kappa^{-1}\|u_1(t) - \widetilde{W}^\In_1 \I\|^2_{L^2} + \|u_2(t) - F_{N_0,M,N_1}(\I)\|^2_{L^2},\\
            &\leq (\kappa^{-1}\vee 1) (\|u_1(0) - \widetilde{W}^\In_1 \I\|^2_{L^2} + \|u_2(0) - F_{N_0,M,N_1}(\I)\|^2_{L^2})e^{-\frac{\tau_{\min}}{2}t}.
        \end{split}
    \end{equation*}
    Finally, using that $\I \in B(0,\rho_K)$ and $F_{N_0,M,N_1}(\I)$ is bounded together with the estimates~\eqref{est:ApproximationSatic0to1}--\eqref{est:ApproximationSatic1to2} we obtain the required result.
\end{proof}

The end of this subsection is devoted to proving the dynamical approximation theorem, stated in Theorem~\ref{thm:DynUniversalApprox}. The strategy consists of successively approximating the target equation, to link it to the structure of the equation~\eqref{eq:AbstractModel}. The following lemma makes it possible to compare the trajectories of two successive approximations.

\begin{lem}
    \label{lem:AbstractLipchits}
    Let $\vn \geq 0$, $\vnu > 0$, $\vtau > 0$, $\vn \geq \vs$, and $G^i \colon \dL^2 \to \dH^{-\vs}$, Lipschitz with constant $L_{G^i}$ and bounded with constant $M_{G^i}$ with $i = 1,2$. Consider the unique solutions $U^1, U^2 \in C_t (\dL^2) \cap L^2_{t,loc}(\dH^{\vn})$ of the equations 
    \begin{equation}
        \tag{$Eq^i$}
        \label{eq:AbstractLipchits}
        \partial_t U^i + (\vtau + \vnu (-\Delta)^{\vn}) U^i = G^i(U^i).
    \end{equation}
    Then, any solution of the equation~\eqref{eq:AbstractLipchits} starting in the $\dL^2$ ball of radius $\rho^i = \sqrt{c_1/\tau_{\min}} M_{G^i}$ stays inside this ball for all times, where $c_1$ is the constant from Lemma~\ref{lem:LinearEqAbs}.

    Furthermore, suppose that
    \begin{equation*}
        \|G^1(U) - G^2(U)\|_{\dH^{-\vs}} \leq \eta, \text{ for all } U\in \dL^2 \text{ with } \|U\|_{\dL^2} \leq \rho^1 \vee \rho^2.
    \end{equation*}
    Then, any two solutions with initial condition inside the respective balls of radius $\rho^i$ satisfy
    \begin{equation}
        \label{est:DiffTwosolutions}
        \sup_{t \in [0,T]} \|U^1(t)- U^2(t)\|_{\dL^2} \leq e^{c_1 L^2_{G^1}T/2}\|U^1(0)- U^2(0)\|_{\dL^2} + \eta \frac{\sqrt{\exp (c_1 L^2_{G^1}T) - 1}}{\sqrt{c_1}L_{G^1}}.
    \end{equation}
\end{lem}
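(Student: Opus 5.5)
The proof rests on the energy estimate~\eqref{est:LinearEqAbs} of Lemma~\ref{lem:LinearEqAbs}, applied twice: once to a single solution to show that the ball is invariant, and once to the difference of two solutions to obtain~\eqref{est:DiffTwosolutions}.

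\emph{Invariance of the ball.} For a solution $U^i$ of~\eqref{eq:AbstractLipchits}, apply~\eqref{est:LinearEqAbs} with source term $G = G^i(U^i)$. Since $\|G^i(U^i)\|_{\dH^{-\vs}} \leq M_{G^i}$, and after dropping the nonnegative gradient term, we get
\begin{equation*}
\frac{\dd}{\dd t}\|U^i\|^2_{\dL^2} + \tau_{\min}\|U^i\|^2_{\dL^2} \leq c_1 M_{G^i}^2 .
\end{equation*}
Gr\"onwall's lemma then gives
\begin{equation*}
\|U^i(t)\|^2_{\dL^2} \leq e^{-\tau_{\min} t}\|U^i(0)\|^2_{\dL^2} + (1-e^{-\tau_{\min} t})\, c_1 M_{G^i}^2/\tau_{\min}.
\end{equation*}
The right-hand side is a convex combination of $\|U^i(0)\|^2_{\dL^2}$ and $(\rho^i)^2$. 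So if $\|U^i(0)\|_{\dL^2}\le \rho^i$, then $\|U^i(t)\|_{\dL^2}\le\rho^i$ for all $t\ge0$.

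\emph{Estimate on the difference.} Set $\overline U = U^1 - U^2$. It solves~\eqref{eq:LinearEqAbs} with source term
\begin{equation*}
\overline G = \bigl(G^1(U^1)-G^1(U^2)\bigr) + \bigl(G^1(U^2)-G^2(U^2)\bigr).
\end{equation*}
By the first step, $\|U^2(t)\|_{\dL^2}\le \rho^2 \le \rho^1\vee\rho^2$ for all times, so the hypothesis gives $\|G^1(U^2)-G^2(U^2)\|_{\dH^{-\vs}}\le\eta$. The first bracket is bounded by $L_{G^1}\|\overline U\|_{\dL^2}$. We now apply~\eqref{est:LinearEqAbs}, dropping both the gradient term and the $\tau_{\min}$ term, which are nonnegative. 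To match the constants in~\eqref{est:DiffTwosolutions}, we should not use $(a+b)^2\le 2a^2+2b^2$ on the right-hand side. Instead we work with $y(t) = \|\overline U(t)\|_{\dL^2}$ directly. This amounts to the Young step in the proof of Lemma~\ref{lem:LinearEqAbs} applied to $\|U\|_{\dH^{\vs}}\|\overline G\|_{\dH^{-\vs}}$, and yields the square-root form
\begin{equation*}
\frac{\dd}{\dd t} y^2 \le c_1 (L_{G^1} y + \eta)^2 .
\end{equation*}

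\emph{Closing the argument and expected obstacle.} The main obstacle is extracting exactly the stated constants from this inequality. The right-hand side of~\eqref{est:DiffTwosolutions} has the form $a e^{kt} + \eta\sqrt{(e^{2kt}-1)/(c_1 L^2)}$ with $k = c_1 L^2/2$. This suggests comparing $y$ with the solution of a linear ODE for $y^2$ of the type $\frac{\dd}{\dd t}z = c_1 L^2 z + \eta^2$, whose solution is
\begin{equation*}
z(t) = e^{c_1L^2 t} z(0) + \eta^2 (e^{c_1 L^2 t}-1)/(c_1L^2).
\end{equation*}
Taking square roots and using $\sqrt{a+b}\le\sqrt a+\sqrt b$ reproduces~\eqref{est:DiffTwosolutions} exactly. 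So I would aim to reach $\frac{\dd}{\dd t}y^2\le c_1L^2y^2+\eta^2$, absorbing the cross term into the dissipation terms $\nu_{\min}\int|\nabla^{\vn}\overline U|^2 + \tau_{\min}\|\overline U\|^2$ via Young's inequality, possibly at the cost of adjusting $c_1$. If this absorption does not close with the same $c_1$, I would settle for the cruder bound $\frac{\dd}{\dd t} y^2\le 2c_1L^2y^2+2c_1\eta^2$. Gr\"onwall's lemma then gives the same structure with modified absolute constants, which is all that is needed later in the proof of Theorem~\ref{thm:DynUniversalApprox}. In either case the final step is Gr\"onwall on $[0,T]$ followed by taking the supremum over $t\in[0,T]$; the right-hand side is increasing in $t$, so the supremum is attained at $t=T$.
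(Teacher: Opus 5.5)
Your approach is the paper's: its proof only says that both estimates follow from \eqref{est:LinearEqAbs} and Gr\"onwall's lemma. Your invariance-of-the-ball argument is complete. Your splitting of $\overline G$, with the $\eta$-hypothesis evaluated at $U^2(t)$ (legitimate by the first step), is the right start for the second part.

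The step that would fail is your Plan A: reaching $\tfrac{\dd}{\dd t}y^2\le c_1L^2y^2+\eta^2$ by absorbing the cross term into the dissipation. No trick can get there, because the $\eta$-term in \eqref{est:DiffTwosolutions} is too small by a factor $\sqrt{c_1}$, so the estimate is false as printed. Take $d=1$, $n_1=1$, $s_1=0$, $\nu_1=\tau_1=\tfrac14$, so that $c_1=4$. Let $G^2\equiv0$ and $G^1\equiv\eta\mathbf{1}$. A constant map is $L$-Lipschitz for every $L>0$; if you want an exact constant, add an $L$-Lipschitz perturbation vanishing on the ball of radius $\rho^1$, which the trajectory never leaves. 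With $U^1(0)=U^2(0)=0$ one has $U^2\equiv0$ and $U^1(t)=4\eta(1-e^{-t/4})\mathbf{1}$. Hence $\|U^1(4)-U^2(4)\|_{\dL^2}=4(1-e^{-1})\eta\approx2.53\,\eta$, whereas the right-hand side of \eqref{est:DiffTwosolutions} with $T=4$ tends to $\eta\sqrt{T}=2\eta$ as $L\to0$.

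Your fallback is therefore the correct outcome, and it can be sharpened to keep the stated exponential rate. Return to the proof of Lemma~\ref{lem:LinearEqAbs} and set $D:=\nu_{\min}\int|\nabla^{\vn}\overline U|^2\,\dd x+\tau_{\min}\int|\overline U|^2\,\dd x$. Then
\begin{equation*}
\tfrac12\tfrac{\dd}{\dd t}y^2+D\le\|\overline U\|_{\dH^{\vs}}\|\overline G\|_{\dH^{-\vs}}\le\sqrt{c_1D}\,\|\overline G\|_{\dH^{-\vs}}
\end{equation*}
by \eqref{est:H2tau2nunablan2}. Young's inequality in the form $\sqrt{c_1D}\,b\le D+\tfrac{c_1}{4}b^2$ gives
\begin{equation*}
\tfrac{\dd}{\dd t}y^2\le\tfrac{c_1}{2}(L_{G^1}y+\eta)^2\le c_1L_{G^1}^2y^2+c_1\eta^2 .
\end{equation*}
Contrary to your remark, $(a+b)^2\le2a^2+2b^2$ costs nothing once the sharper Young constant is used. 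Gr\"onwall's lemma and $\sqrt{a+b}\le\sqrt a+\sqrt b$ then yield
\begin{equation*}
\sup_{t\in[0,T]}\|U^1(t)-U^2(t)\|_{\dL^2}\le e^{c_1L_{G^1}^2T/2}\|U^1(0)-U^2(0)\|_{\dL^2}+\frac{\eta}{L_{G^1}}\sqrt{e^{c_1L_{G^1}^2T}-1}.
\end{equation*}
This is \eqref{est:DiffTwosolutions} with its $\eta$-term multiplied by $\sqrt{c_1}$. It is consistent with the example: as $L\to0$ it tends to $\eta\sqrt{c_1T}=4\eta$. This is what the paper's one-line argument actually delivers. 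It is also all that the proof of Theorem~\ref{thm:DynUniversalApprox} uses, since only a bound linear in $\eta$, with a constant depending on $c_1$, $L_{G^1}$ and $T$, matters there.
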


The existence of such solutions follows from the same Picard--Lindel\"of strategy as in Theorem~\ref{thm:WPSemigroup}. The estimates follow from estimate~\eqref{est:LinearEqAbs} and the Gr\"onwall Lemma. We now prove Theorem~\ref{thm:DynUniversalApprox}.

\begin{proof}[Proof of Theorem~\ref{thm:DynUniversalApprox}]
    In the following, when we encounter a Lipschitz function $G\colon \dL^2 \to \dH^{-s}$, we denote by $L_G$ its Lipschitz constant. We here denote the linear operator $(\tau + \nu(-\Delta)^n)$ by $A$ and we choose $\sigma : \dR \to \dR$ to be the sigmoid $\sigma(r) := 4((1+\exp(-r))^{-1} - 1/2)$, such that $\sigma(0) = 0$ and $\sigma'(0) = 1$. 
    
    Consider the Fourier--Hilbert orthonormal basis $\{e_n\}_{n\in\dN}$ of $L^2(\Omega)$. By construction we also have that $A e_n = \lambda_n e_n$ for some $\lambda_n >0$ for all $n \geq 0 $.
    For $\Lambda>0$, we introduce the finite-dimensional subspace $E_\Lambda$ of $L^2$ defined as
    \begin{equation*}
        E_\Lambda := \Span \{e_n, \text{ such that } \|(1-\Delta) e_n\|_{L^2} \leq \Lambda, n \in \dN \}.
    \end{equation*}

    Set $N = \dim E_\Lambda$, let $P_{N} \colon L^2 \to L^2$ denote the orthogonal projection onto $E_\Lambda$, and set $Q_N := (I-P_N) \colon L^2 \to L^2$, the projection onto the orthogonal complement of $E_\Lambda$.

    From the compactness of $K$ in $V$, there exists $\rho_2 >0$ such that $\|\I\|_V \leq \rho_2$ for all $\I \in K$, and for any $\eta_1 > 0$ there exists a finite-dimensional subspace $E^\In_{\eta_1}$ of $V$ with basis $\{v_1, \cdots, v_{N^\In_{\eta_1}}\}$ and projection operator $P^\In_{\eta_1} \colon V\to V$, such that $\|P^\In_{\eta_1} \I - \I\|_V \leq \eta_1$ for all $\I \in K$.

    Now, let $\varepsilon >0$, $u^0 \in H^\gamma$ with $\|u^0\|_{H^\gamma} \leq \rho$ and $\I \in K$. We first consider the Galerkin Approximation
    \begin{equation}
        \label{eq:DATGalerkin}
        \partial_t u_N + A u_N = P_N R (u_N, P^\In_{\eta_1} \I),
    \end{equation}
    with initial condition $u_N(0) = P_N u^0$. 
    
    We note that by assumption on $R$, as $s < n$ and the above, we have for all $u \in L^2$ and $ \I \in K$
    \begin{align*}
         \|P_N R (u, P^\In_{\eta_1} \I) - R (u,\I)\|_{H^{-n}} & \leq \|Q_N R(u, \I)\|_{H^{-n}} +  \|R (u, P^\In_{\eta_1} \I) - R (u,\I)\|_{H^{-n}}\\
         &\leq \Lambda^{s-n} \|R(u, \I)\|_{H^{-s}} + L_R \eta_1.
    \end{align*}

    From the regularity of the initial condition, we have $\|u^0 - P_N u^0 \|_{L^2} \leq \Lambda^{-\gamma} \|u^0\|_{H^{\gamma}}$. Using Lemma~\ref{lem:AbstractLipchits}, we then obtain the following estimate between the solution of the equation~\eqref{eq:GeneralEquationUnivAppDyn} and its Galerkin approximation~\eqref{eq:DATGalerkin}
    \begin{equation}
        \label{est:DUATeps1}
        \sup_{t\in [0,T]} \|u(t) - u_N(t)\|_{L^2} \leq  \left(e^{c_1 L^2_{R}T/2} \rho + \frac{\sqrt{\exp (c_1 L^2_{R}T) - 1}}{\sqrt{c_1}L_{R}} C_R\vee L_R \right) (\Lambda^{-((n-s)\wedge \gamma)}+\eta_1)
    \end{equation}
    where $C_R = \sup_{u\in L^2, \I \in K}\|R(u, \I)\|_{H^{-s}}$. We then choose $\Lambda$ sufficiently large and $\eta_1>0$ sufficiently small such that the previous is bounded by $\varepsilon/3$.

    We now leverage Cybenko's Universal Approximation Theorem~\cite{cybenko1989approximation} to approximate the equation~\eqref{eq:DATGalerkin}. To this end, let $\Pi \colon \dR^N \to E_\Lambda$, $\Pi^{-1} \colon E_\Lambda \to \dR^N$, defined as $\Pi x = \sum_{n = 1}^N x_n e_n$ and $\Pi^{-1} u = (\langle e_1, u\rangle, \cdots, \langle e_N, u\rangle)^\Trps$, and similarly let $\Pi_{\In,\eta_1} \colon \dR^{N^\In_{\eta_1}} \to E^\In_{\eta_1}$ and $\Pi_{\In,\eta_1}^{-1} \colon E^\In_{\eta_1}\to \dR^{N^\In_{\eta_1}}$ be a coordinate isomorphism and its coordinate map of the finite-dimensional vector space $E^\In_{\eta_1}$, for the basis $\{v_1, \cdots, v_{N^\In_{\eta_1}}\}$. Let $f_{N} : \dR^N \times \dR^{N^\In_{\eta_1}}\to \dR^N$ be defined as $f_N (x, v) = \Pi^{-1}P_N R(\Pi x, \Pi_{\In,\eta_1} v)$. From the Universal Approximation theorem, for $\eta_2>0$, there exist $M \geq 1$, $A \in \mathcal{M}_{N,M}(\dR)$, $B^1 \in \mathcal{M}_{M,N}(\dR)$, $B^2 \in \mathcal{M}_{M,N^\In_{\eta_1}}(\dR)$ and $\theta \in \dR^M$ such that $f_{N,M} \colon \dR^N \times \dR^{N^\In_{\eta_1}} \to \dR^N$ defined as $f_{N,M}(x,v) = A \sigma(B^1 x + B^2 v + \theta)$ satisfies
    \begin{equation*}
        \sup_{\substack{x \in \dR^N,\; v \in \dR^{N^\In_{\eta_1}}\\
                    \|x\| \leq \rho,\; \|v\| \leq \rho_2}} \|f_N(x,v) - f_{N,M}(x,v)\| \leq \eta_2.
    \end{equation*}
    
    We now construct the following functional lifting. Consider a family of smooth bump functions $\varphi_1, \cdots \varphi_M \in H^{2n}$ associated with disjoint sub-rectangular domains $\Omega_1, \cdots, \Omega_M$ of $\Omega$, with nonzero constant volume, $|\Omega_m| = |\Omega_1| > 0$ for all $m = 1, \cdots, M$, such that, $\varphi_m \equiv \delta_{m \ell}$ on $\Omega_\ell$ for all $\ell, m \in \{1, \cdots, M\}$. That is, the bump functions are constant equal to one on their associated sub-rectangular domain, and zero on the others. This allows us to define the linear operators $W_{11}^0 \colon L^2 \to H^{2n}$, $W_{21} \colon L^2 \to H^{2n}$, $\widetilde{W}^\In_1 \colon V \to H^{2n}$ and $\beta_1 \in L^2$ as
    \begin{align*}
         W_{11}^0  u = \sum_{n = 1}^N \sum_{m = 1}^M \varphi_{m} B^1_{nm} \langle e_n, u\rangle, & \; W_{21}  u = \sum_{n = 1}^N \sum_{m = 1}^M e_{n} \frac{A_{mn}}{|\Omega_m|}\langle \Indic_{\Omega_m}, u\rangle,\\
         \widetilde{W}^\In_1  \mathcal{I} = \sum_{k = 1}^{N^\In_{\eta_1}} \sum_{m = 1}^M \varphi_{m} B^2_{km} (\Pi_{\In,\eta_1}^{-1} P^\In_{\eta_1} \I)_k, & \; \beta_1 = \sum_{m=1}^M \theta_m \varphi_m. 
    \end{align*}
    Then by construction, the bounded Lipschitz function $R_{N,M} \colon L^2 \times V \to H^{2n}$ defined as 
        \begin{equation*}
            R_{N,M} (u,\mathcal{I}) = W_{21} \sigma (W_{11}^0 u + \widetilde{W}^\In_1 \I+ \beta_1)
        \end{equation*}
    is a functional lifting of $f_{N,M}$, that is, $f_{N,M}(x, v) = \Pi^{-1} R_{N,M}(\Pi x, \Pi_{\In,\eta_1} v)$. Thus this leads to the estimate
    \begin{equation*}
        \sup_{\substack{u \in E_\Lambda,\; \I\in K\\
                    \|u\|_{L^2} \leq \rho}} \|P_N R(u,P_{\eta_1}^\In \I) - R_{N,M}(u,\I)\|_{L^2}\leq \eta_2.
    \end{equation*}
    We now compare the solutions of
    \begin{equation}
        \label{eq:CybenkoGalerkin}
        \partial_t \Tilde{u}+ A \Tilde{u} = R_{N,M} (\Tilde{u}, \mathcal{I})
    \end{equation}
    with initial condition $\Tilde{u}(0) = u_N (0)$, with the solution of the Galerkin Approximation with Lemma~\ref{lem:AbstractLipchits} as
    \begin{equation}
        \label{est:DUATeps2}
        \sup_{t\in [0,T]} \| u_N(t)- \Tilde{u}_N(t) \|_{L^2} \leq \frac{\sqrt{\exp (c_1 L^2_{R}T) - 1}}{\sqrt{c_1}L_{R}} \eta_2.
    \end{equation}
    The previous holds, as the Lipschitz constant of the Galerkin approximation $P_N R$ is bounded by the Lipschitz constant of $R$. We also note that by construction $\Tilde{u}(t) \in E_\Lambda$ for all times. We then can choose $\eta_2>0$ to be sufficiently small so that the above is bounded by $\varepsilon/3$.

    We construct the last approximated equation by first writing the equation~\eqref{eq:CybenkoGalerkin} with the augmented state variable $W_{11}^0 \Tilde{u} + \widetilde{W}^\In_1 \I$,
    \begin{equation*}
        \begin{cases}
            \partial_t \Tilde{u}_1 + A \Tilde{u}_1 = W^0_{11}W_{21} \sigma(\Tilde{u}_1 + \beta_1) + (A W^0_{11} - W^0_{11} A) \Tilde{u}_2 + A \widetilde{W}^\In_1 \mathcal{I},\\
            \partial_t \Tilde{u}_2 + A \Tilde{u}_2 = W_{21} \sigma(\Tilde{u}_1 + \beta_1),
        \end{cases}
    \end{equation*}
    with initial condition $\Tilde{U}(0) = (\Tilde{u}_1(0), \Tilde{u}_2(0))^\Trps = (W_{11}^0 u_N(0) + \widetilde{W}^\In_1 \mathcal{I},u_N(0))^\Trps$. We note that, as $\Tilde{u}_2 \in E_\Lambda$, the previous equation stays equivalent to a Lipschitz ODE and by uniqueness $\Tilde{u}_2$ coincides with $\Tilde{u}$. We now define $W_{11} :=  W^0_{11}W_{21} \colon L^2 \to L^2$, $W^\In_1 := A \widetilde{W}^\In_1 \colon V \to L^2$. Then, using that $A$ is self-adjoint and the basis of $E_\Lambda$ is an eigenbasis of $A$, specifically we have that $\langle e_n, A u \rangle = \langle A e_n, u \rangle = \lambda_n \langle e_n , u \rangle$, so that 
    \begin{equation*}
        W^0_{12} := (A W^0_{11} - W^0_{11} A) = \sum_{n = 1}^N \sum_{m = 1}^M (A\varphi_{m} - \lambda_n \varphi_m ) B^1_{nm} \langle e_n, \cdot \rangle,
    \end{equation*}
    and $W^0_{12} \colon L^2 \to L^2$, from the smoothness of the functions $\varphi_m$. Thus, the previous system can be written as
    \begin{equation}
        \label{eq:CybenkoGalerkinAgmented}
        \begin{cases}
            \partial_t \Tilde{u}_1 + A \Tilde{u}_1 = W_{11} \sigma(\Tilde{u}_1 + \beta_1) + W^0_{12} \Tilde{u}_2 + W^\In_1 \mathcal{I},\\
            \partial_t \Tilde{u}_2 + A \Tilde{u}_2 = W_{21} \sigma(\Tilde{u}_1 + \beta_1).
        \end{cases}
    \end{equation}
    The right-hand side is a Lipschitz function from $\dL^2\times V \to \dL^2$, with a Lipschitz constant $L_{N,M}$, and the structure ensures that the solutions stay bounded. Furthermore, since $\Tilde{u}_2$ stays in $E_\Lambda$, it is in particular a function in $L^\infty$, with $\|\Tilde{u}_2\|_{L^\infty} \leq C_{N,M}$, for some $C_{N,M} > 0$. We denote by $\widetilde{U} = (\Tilde{u}_1, \Tilde{u}_2)^\Trps$ this solution.
    
    From the choice of $\sigma$, we can ensure that for all $\eta_3 > 0$, there exists $h > 0 $ sufficiently large such that
    \begin{equation*}
        \sup_{r\in [-C_{N,M},C_{N,M}]} \left|r - h\sigma\left(\tfrac{1}{h}r\right)\right| \leq \frac{\eta_3}{\|W^0_{12}\|_{L^2\to L^2} \vee 1}.
    \end{equation*}
     This ensures that for all $u \in L^\infty$ with $\|u\|_{\infty} \leq C_{N,M}$
     \begin{equation}
        \label{est:SigmaApproximateIdentity}
         \left\|W^0_{12} u - W^0_{12} h \sigma\left(\tfrac{1}{h} u\right) \right\|_{L^2} \leq \eta_3.
     \end{equation}
     Defining the constant $L^\infty$-function $\alpha_2 \equiv h^{-1}$ and the linear operator $W_{12} := h W^0_{12} \colon L^2 \to L^2$, we introduce the final approximated equation:
     \begin{equation}
        \label{eq:LDNFEq4DUAT}
         \begin{cases}
             \partial_t u_1 + A u_1 =  W_{11} \sigma(u_1 + \beta_1) + W_{12} \sigma(\alpha_2 u_2) + W^\In_1 \mathcal{I},\\
             \partial_t u_2 + A u_2 = W_{21}\sigma(u_1 + \beta_1),
         \end{cases}
     \end{equation}
     with initial condition $U_0 = (W^0_{11} P_N u^0 + \widetilde{W}^\In_1 \I, P_N u^0)^\Trps$. This last equation is a two-layer dissipative neural field equation of the form~\eqref{eq:AbstractModel}. We denote by $U = (u_1, u_2)^\Trps$ its solution. 
     
     From the estimate~\eqref{est:SigmaApproximateIdentity} and Lemma~\ref{lem:AbstractLipchits}, building on the $L^\infty$-bound of the solution for this specific initial condition, we obtain
     \begin{equation}
        \label{est:DUATeps3}
         \sup_{t\in [0,T]} \| U(t) - \widetilde{U}(t) \|_{\dL^2} \leq \frac{\sqrt{\exp (c_1 L^2_{N,M}T) - 1}}{\sqrt{c_1}L_{N,M}} \eta_3.
     \end{equation}
    Taking $\eta_3$ sufficiently small such that the previous estimate holds with an $\varepsilon/3$ bound concludes the proof.
\end{proof}

\subsection{Bifurcation Analysis of Spontaneous Cortical Activity}

In this subsection, we prove the results stated in Theorems~\ref{thm:BifStatDiag}, \ref{thm:BifStatDiagSelfAdjPerturbation} and~\ref{thm:BifHopfDiag}, concerning the potential qualitative behavior of the abstract model proposed in this paper. We begin the analysis with the following properties of the functional equation.

\begin{prop}
    \label{prop:FfunctionalProperties} Let $n \geq 1$, $\nu > 0 $, $\tau >0$. Under the assumptions~\eqref{as:SCA-AssW}, \eqref{as:SCA-AssW2} and \eqref{as:SCA-AssSigma} the functional $F\colon H^{2n}\times \dR \to L^2$ defined in~\eqref{def:SCAFunctional} is in the space $F \in C^{k}(H^{2n}\times \dR, L^2)$. $F$ is a nonlinear Fredholm operator of index 0, that is, $\dim \ker (D_uF(u,\lambda)) = \dim \coker (D_uF(u,\lambda))$ for all $u, \lambda$. Finally, $-D_uF(0,\lambda)$ generates an analytic semigroup for all $\lambda$.
\end{prop}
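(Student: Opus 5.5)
The plan is to verify the three properties in turn on $H^{2n}=H^{2n}(2\pi\Torus)$. Everything rests on three facts: the one-dimensional Sobolev embedding $H^{2n}\hookrightarrow C^{2n-1}$ (recall $n\geq 1$), the fact that $H^{r}$ is a Banach algebra for $r>1/2$, and the diagonal structure of $W$ and of $A^n_{\nu,\tau}$ in the Fourier basis $\{e_k\}$.

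\emph{Regularity.} I would split $F(u,\lambda)=A^n_{\nu,\tau}u-\lambda W\sigma(u)$. The first term is a bounded linear map $H^{2n}\to L^2$. The bilinear dependence on $\lambda$ is harmless, so only the Nemytskii map $u\mapsto\sigma(u)$ needs work. By Assumption~\eqref{as:SCA-AssW}, $W\in\mathcal{L}(H^{s},L^2)$ with $s\leq n$. It therefore suffices to show that $u\mapsto\sigma(u)$ is $C^k$ from $H^{2n}$ into $H^{n}$, or even into $H^{2n}$.

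For this I would use the algebra property of $H^{2n}$ together with the chain rule. Derivatives $\partial_x^j\sigma(u)$ with $j\leq 2n$ are sums of products of $\sigma^{(i)}(u)$ and derivatives of $u$. The factors $\sigma^{(i)}(u)$ are continuous and bounded because $u\in C^{2n-1}$. The $m$-th Fréchet derivative is $(h_1,\dots,h_m)\mapsto\sigma^{(m)}(u)h_1\cdots h_m$, and its continuity in $u$ follows from the same estimates applied to $\sigma^{(m)}$. This step requires $\sigma$ to be smooth, of class $C^{k+2n+1}$ say, on all of $\dR$ rather than only analytic near $0$. I would state this explicitly as the standing smoothness assumption on the sigmoid, and alternatively note that all later uses are local near $u=0$, where analyticity of $\sigma$ gives $C^\infty$ (indeed analytic) dependence. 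I expect this Nemytskii step to be the main technical point. It is classical but requires care in tracking the regularity of $\sigma$ needed for $C^k$.

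\emph{Fredholm index $0$.} We have $D_uF(u,\lambda)h=A^n_{\nu,\tau}h-\lambda W(\sigma'(u)h)$. Since $\tau>0$, the operator $A^n_{\nu,\tau}\colon H^{2n}\to L^2$ is an isomorphism, with Fourier symbol $\nu|k|^{2n}+\tau\geq\tau$. The perturbation $K_u h:=\lambda W(\sigma'(u)h)$ factors as
\[
H^{2n}\xrightarrow{\;\sigma'(u)\,\cdot\;}H^{2n-1}\xrightarrow{\;W\;}H^{2n-1-s}\hookrightarrow L^2 ,
\]
where the first map uses $\sigma'(u)\in H^{2n-1}$ and the algebra property. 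The final inclusion is compact by Rellich on the torus, since $2n-1-s\geq n-1$. In the borderline case $n=1$, $s=1$ the exponent is $0$ and this route is not compact, so I would instead place $\sigma'(u)h$ in $H^{2n}$, using $\sigma'(u)\in H^{2n}$ when $\sigma$ is one degree smoother. Then $W$ maps it to $H^{2n-s}\subset H^{n}$, and $H^n\hookrightarrow L^2$ is compact because $n\geq1$. Hence $D_uF=A^n_{\nu,\tau}-K_u$ is an isomorphism plus a compact operator, so it is Fredholm of index $0$, i.e.\ $\dim\ker=\dim\coker$.

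\emph{Analytic semigroup.} By Assumption~\eqref{as:SCA-AssSigma}, $\sigma'(0)=1$, so $-D_uF(0,\lambda)=-A^n_{\nu,\tau}+\lambda W$. The operator $-A^n_{\nu,\tau}$ is self-adjoint and negative, so it generates an analytic semigroup on $L^2$. Because $s\leq n$, the operator $W(A^n_{\nu,\tau})^{-1/2}$ is bounded, as one checks on Fourier symbols: $|\hat W_k|\lesssim\langle k\rangle^{s}\lesssim(\nu|k|^{2n}+\tau)^{1/2}$. By the interpolation (moment) inequality
\[
\|(A^n_{\nu,\tau})^{1/2}h\|\leq\varepsilon\|A^n_{\nu,\tau}h\|+C_\varepsilon\|h\| ,
\]
the perturbation $\lambda W$ is $A^n_{\nu,\tau}$-bounded with relative bound $0$. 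The standard perturbation theorem for analytic generators (e.g.\ Pazy, Thm.~3.2.1) then shows that $-A^n_{\nu,\tau}+\lambda W$ generates an analytic semigroup for every $\lambda\in\dR$.
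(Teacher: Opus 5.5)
Your proposal is correct and follows essentially the same route as the paper. The paper also obtains Fredholm index $0$ by writing $D_uF(u,\lambda)$ as the elliptic operator $\nu(-\Delta)^n+\tau$ plus the compact perturbation $-\lambda W(\sigma'(u)\,\cdot)$, using Rellich--Kondrachov. It gets the analytic semigroup by citing Henry's sectoriality and perturbation results (Thm~1.3.2 and~1.3.4), which amount to your relative-bound-$0$ argument.

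Your version is more careful than the paper's, which cites only the embedding $H^{s}\subset C^0_b$. The paper tacitly assumes two things you make explicit: that $\sigma$ is smooth on all of $\dR$, and that $\sigma'(u)h$ lies in a space from which $W$ maps compactly into $L^2$.
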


\begin{proof}
    The continuity of $F \in C(H^{2n}\times\dR, L^2)$ and $D_uF \in C(H^{2n}\times\dR, L(H^{2n}, L^2))$ follows from Morrey's embedding theorem $H^{s}(\Torus) \subset C^0_b(\Torus)$ with $s >1/2$. The linearized operator is given by
    \begin{equation*}
        D_uF(u, \lambda)[\phi] = (\nu(-\Delta)^n + \tau) \phi - \lambda W (\sigma'(u) \phi),
    \end{equation*}
    as the operator $(\nu(-\Delta)^n + \tau) \colon H^{2n} \to L^2$ is Fredholm of index zero from elliptic theory and as $W$ is an operator of order $s$ with $s\leq n$, we have that $W \in L(H^{2n}, H^{2n-s})$ with $2n-s > 0$, by the Rellich--Kondrachov theorem, as the embedding $H^{2n-s} \subset \subset L^2$ is compact, $W$ is a compact operator from $H^{2n}$ to $L^2$, and thus $ - \lambda W (\sigma'(u) \cdot )$ is a compact perturbation of a Fredholm operator, so that $D_uF(u,\lambda)$ is Fredholm of index zero. Finally, the analytic semigroup property of $-D_u F(0,\lambda)$ follows from~\cite[Thm~1.3.4]{henry2006geometric}, as it is a sectorial operator by~\cite[Thm~1.3.2]{henry2006geometric}.
\end{proof}

We now prove Theorem~\ref{thm:BifStatDiag} regarding the existence of nontrivial stationary solutions of the equation~\eqref{eq:StationnarySol1D1L} for diagonal operators $W$.

\begin{proof}[Proof of Theorem~\ref{thm:BifStatDiag}]
    We first note that as $W$ is diagonal and commutes with the reflection operator,  the coefficients $\hat{W}_\ell$ are real and $\hat{W}_\ell = \hat{W}_{-\ell}$ for all $\ell \in \dZ$, and thus the operators $W$ and $A - \lambda W$ are self-adjoint. We then define the following function spaces, $X = H^{2n}\cap \{ u (x) = u(-x)\}$ continuously embedded into $Z = L^2 \cap \{ u (x) = u(-x)\}$ and $Z$ is equipped with the $L^2$ scalar product. We define the equation functional $F \colon X\times \dR \to Z$, as
    \begin{equation*}
        F(u,\lambda ) = A^n_\tau u -\lambda W \sigma(u),
    \end{equation*}
     and its Fréchet derivative at $(0,\lambda)$ is given by
    \begin{equation*}
        D_u F (0,\lambda) = A^n_\tau - \lambda W : H^{2n} \to L^2.
    \end{equation*}
    Let $k\geq 1$ be an exceptional index and define $\lambda^k_0 = \hat{A}_k/\hat{W}_k \in \dR$. Using that the linearized operator is diagonal in Fourier, the kernel equation can be written as
    \begin{equation*}
        (\hat{A}_\ell-\lambda^k_0 \hat{W}_\ell)\hat{u}_\ell = 0, \forall \ell \in \dZ.
    \end{equation*}
    Since $k$ is an exceptional index, we obtain that,
    \begin{equation*}
        \hat{u}_\ell = 0, \forall \ell \in \dZ, |\ell| \neq k,
    \end{equation*}
    and the kernel of the linearized operator in $X$ is given by $\Span \{ \phi\}$, with $\phi(x) = \frac{1}{2\sqrt{\pi}}(e^{ikx} + e^{-ikx})$. As the linearized operator is self-adjoint, the kernel and the cokernel coincide. We define the projection operator $P \colon X \to X$ onto the kernel as $P = \langle \phi, \, \cdot \,\rangle \phi$. We then directly apply the Crandall--Rabinowitz theorem, see \textit{e.g.}~\cite[Thm~I.5.1]{kielhofer2012bifurcation}, by checking that $D^2_{u,\lambda}F(0,\lambda^k)[\phi] \notin R(D_uF(0,\lambda^k))$. This follows directly from
    \begin{equation*}
        \langle D^2_{u\lambda}F(0,\lambda^k)[\phi], \phi \rangle = - \langle W \phi, \phi \rangle = -\hat{W}_k \neq 0.
    \end{equation*}

    The family of bifurcating solutions is obtained by translation invariance. We conclude the proof by applying the formulas of~\cite[pp.~18--19]{kielhofer2012bifurcation} to obtain the Taylor expansion of $\lambda(s)$. The first order expansion of the function $u^k_{(\theta, s)}$ is already given by the previously mentioned result.

    The formulas are
    \begin{equation*}
        \dot{\lambda}(0) = - \frac{1}{2}\frac{\langle D^2_{uu}F(0,\lambda^k)[\phi, \phi], \phi\rangle}{\langle D^2_{u\lambda}F(0,\lambda^k)[\phi], \phi\rangle},
    \end{equation*}
    and
    \begin{equation*}
        \ddot{\lambda}(0) = - \frac{1}{3}\frac{\langle \mathrm{T}, \phi\rangle}{\langle D^2_{u\lambda}F(0,\lambda^k)[\phi], \phi\rangle},
    \end{equation*}
    where $\mathrm{T} = P D^3_{uuu}F(0,\lambda^k)[\phi, \phi, \phi] - 3 P D^2_{uu} F(0,\lambda^k)[\phi, (I-P) (D_uF(0,\lambda^k)^{-1}|_{\phi^\perp}(I-P)D^2_{uu}F(0,\lambda^k)[\phi,\phi]]$. Given that $D^2_{uu}F(0,\lambda^k)[\phi, \phi] = -\lambda^k\sigma_2 W (\phi^2)$ and $D^3_{uuu}F(0,\lambda^k)[\phi, \phi, \phi] = -\lambda^k\sigma_3 W (\phi^3)$, through explicit computations we obtain the stated result.

    The principle of exchange of stability for pitchfork bifurcation gives the sign of the critical eigenvalue of the linearization along the bifurcating branch. Finally, since the operator is sectorial, nonlinear (in)stability follows from~\cite[Thm~5.1.1]{henry2006geometric}.
\end{proof}

We now prove an extension of the previous result for self-adjoint perturbations of diagonal operators.

\begin{proof}[Proof of Theorem~\ref{thm:BifStatDiagSelfAdjPerturbation}]
    We begin with the eigenvalue perturbation problem. We recall the spaces $X = H^{2n}\cap \{ u (x) = u(-x)\}$ and $Z = L^2 \cap \{ u (x) = u(-x)\}$, such that $X$ is continuously embedded into $Z$. Define the function $G: X\times \dR \times \dR \to Z\times \dR$ as

    \begin{equation*}
        G(\phi, \lambda, \omega) = \begin{pmatrix}
            (A-\lambda W(\omega)) \phi\\
            \|\phi\|^2 - 1
        \end{pmatrix}.
    \end{equation*}
    
    From the assumptions on $W_0$, we have
    \begin{equation*}
        G(\phi_0, \lambda_0, 0) = 0,
    \end{equation*}
    with $\phi_0 = \frac{1}{\sqrt{\pi}}\cos(k \, \cdot\,)$, $\lambda_0 = \hat{A}_k / \hat{W}_{0;k}$. We also have the decomposition $X = \Span \{\phi_0\} \oplus^\perp X_1$ and $Z = \Span \{\phi_0\} \oplus^\perp Z_1$, such that
    $(A- \lambda_0 W(0)) : X_1 \to Z_1$ is a Banach isomorphism. 
    %Using that $W(\omega)$ is a relatively compact perturbation of $A$, and that $A$ is of compact resolvent, the spectrum of $(A- \lambda_0 W(\omega))$ is discrete with no accumulation point on a compact domain. 
    One easily verifies that $G \in C^1(X\times \dR \times \dR, Z\times \dR)$. The Jacobian of $G$ with respect to $\phi,\lambda$ is given by
    \begin{equation*}
        D_1 G (\phi_0, \lambda_0, \omega) = \begin{pmatrix}
                                            A-\lambda_0 W(\omega) & -W(\omega) \phi_0\\
                                            2\langle \phi_0, \,\cdot \,\rangle & 0
                                        \end{pmatrix}.
    \end{equation*}

    We then obtain that $D_1 G(\phi_0, \lambda_0, 0) : X \times \dR \to Z \times \dR$  is a Banach isomorphism, with its inverse given by
    \begin{equation*}
        (D_1 G(\phi_0, \lambda_0, 0))^{-1} = \begin{pmatrix}
                                                (A-\lambda_0 W_0)|^{-1}_{X_1}(\;\cdot - \frac{\langle \;\cdot\;, \phi_0\rangle}{\langle \phi_0 , W_0 \phi_0\rangle} W_0 \phi_0) & \frac{1}{2}\phi_0\\
                                                - \frac{\langle \;\cdot\;, \phi_0\rangle}{\langle \phi_0 , W_0 \phi_0\rangle} & 0
                                        \end{pmatrix}.
    \end{equation*}
    Applying the Banach implicit function theorem, we obtain the existence of a local curve $\omega \mapsto (\lambda(\omega), \phi_0(\omega))$, and space decompositions $X = X_0(\omega) \oplus^\perp X_1(\omega)$, $Z = X_0(\omega) \oplus^\perp Z_1(\omega)$, where we denote $X_0(\omega) = \Span\{\phi_0(\omega)\}$, and associated projectors $P(\omega):X\to X_0(\omega)$ such that
    \begin{equation}
        \ker ((A-\lambda(\omega)W(\omega)) = \coker  ((A-\lambda(\omega)W(\omega)) = X_0(\omega).
    \end{equation}
    The kernel and cokernel coincide as the operator is self-adjoint. Furthermore, we have the following expansions, $\lambda(\omega) = \lambda_0 + \omega \lambda_1 + o(\omega)$, $\phi_0(\omega) = \phi_0 + \omega \phi_1 + o(\omega)$, where
    \begin{align*}
        \lambda_1 &= - \lambda_0 \frac{\langle \phi_0, W_1 \phi_0\rangle}{\langle \phi_0, W_0 \phi_0\rangle},\\
        \phi_1 & = (A-\lambda_0 W_0)|^{-1}_{X_1}(\lambda_0 W_1 \phi_0 + \lambda_1 W_0 \phi_0).
    \end{align*}
    We then define the following equation functional $F : X \times \dR \times \dR \to Z$ as
    \begin{equation*}
        F(u, \lambda, \omega) = A u -\lambda W(\omega)\sigma(u),
    \end{equation*}
    and we introduce the following parametric Lyapunov--Schmidt reduction of the problem $F(u,\lambda, \omega)= 0$, around the solution $(0,\lambda_0, 0)$, by writing the following space decomposition $X = X_0 \oplus^\perp X_1$, with $X_0 = \Span \{ \phi_0\}$ and $Z = X_0 \oplus^\perp Z_1$, the equation can be written as
    \begin{align*}
        (I - P) F(s \phi_0 + g, \lambda, \omega) = 0 \text{ in } Z_1,\\
        P F(s \phi_0 + g, \lambda, \omega) = 0  \text{ in } X_0,
    \end{align*}
    where $g \in X_1,\lambda\in \dR$. Applying the Lyapunov--Schmidt method, we obtain, from the Banach implicit function theorem, the existence of a function $g : \dR^3 \to X_1$ that reduces the problem to the finite-dimensional second equation. Furthermore, we have that $D_{\lambda}g(0, 0, \omega) = 0 , D_{s}g( 0, 0, \omega) = 0$, see \textit{e.g.}~\cite[Cor.~I.2.4]{kielhofer2012bifurcation}. We then define the function $\Phi : \dR^3 \to \dR $ as 
    \begin{equation*}
        \Phi(\lambda, \omega, s) = \langle \phi_0 , F(s \phi_0 + g(\lambda, s, \omega), \lambda, \omega) \rangle = \int_0^1 \langle \phi_0 , D_u F(s t \phi_0 + g(\lambda, s t, \omega), \lambda, \omega)[s\phi_0 + s\partial_s g(\lambda, s t, \omega)] \rangle \dd t.
    \end{equation*}
    From the last identity, we define the rescaled equation $\Tilde{\Phi}(\lambda, \omega, s) = \Phi(\lambda, \omega, s)/s$, and we observe that
    \begin{align*}
        \partial_\lambda \Tilde{\Phi}(\lambda_0, 0, 0) &= \int_0^1 \langle \phi_0 , D^2_{u\lambda} F(0, \lambda_0, 0)[\phi_0 + \partial_s g(\lambda_0, 0, 0)] \rangle\\
        &\hspace{3em}+ \langle \phi_0, D^2_{uu} F(0, \lambda_0, 0)[\phi_0 + \partial_s g(\lambda_0, 0, 0), \partial_\lambda g(\lambda_0, 0, 0)] \rangle \dd t,\\
        &=  \langle \phi_0 , D^2_{u\lambda} F(0, \lambda_0, 0)[\phi_0] \rangle,
    \end{align*}
    the last term is nonzero from the same argument as in the proof of Theorem~\ref{thm:BifStatDiag}, thus giving the transversality condition. The implicit function theorem gives the existence of a surface solution $(s,\omega) \mapsto \lambda(s,\omega)$. Finally, by differentiating the equation
    \begin{equation*}
        F(s\phi_0 + g(\lambda(s,\omega), s, \omega), \lambda(s, \omega), 
        \omega) = 0, 
    \end{equation*}
    with respect to $s$, and evaluating at $s=0$, we obtain
    \begin{equation*}
        0 = D_u F(0, \lambda(0, \omega), \omega)[\phi_0 + \partial_s g(\lambda(0,\omega), 0, \omega)] + D_\lambda F(0, \lambda(0, \omega), \omega)[\partial_\lambda g (\lambda(0,\omega), 0, \omega)].
    \end{equation*}
    where we used that $g(\lambda, 0, \omega) = 0$. As $\partial_\lambda g (\lambda(0,\omega), 0, \omega) = 0$, this implies that the kernel of $D_u F(0, \lambda(0, \omega), \omega)$ contains $\phi_0 + \partial_s g(\lambda(0,\omega), 0, \omega)$. Locally in $\omega$, the kernel is exactly given by $\phi_0(\omega)$ along $\lambda(\omega)$. Up to a reparameterization and shrinking of the domain of definition, the two curves coincide. This gives the expansion of $\lambda(s,\omega)$ in $\omega$ at first order; the expansion in $s$ is similar to the one obtained in the proof of Theorem~\ref{thm:BifStatDiag}, and $\partial_s g(\lambda_0, 0, \omega) = \omega \phi_1$.
\end{proof}

We conclude this section with the proof of Theorem~\ref{thm:BifHopfDiag} on the existence of traveling wave solutions.

\begin{proof}[Proof of Theorem~\ref{thm:BifHopfDiag}]
    We directly verify the assumptions of~\cite[Thm~I.8.2]{kielhofer2012bifurcation}. The assumptions on $F$ follow from Proposition~\ref{prop:FfunctionalProperties}. We only check the spectral properties at a \textbf{Hopf exceptional point}. We have that
    \begin{equation*}
        D_u F\left(0,(\Real (\mathrm{c}_k))^{-1}\right) e_k = -  \mathrm{i}  \frac{\Imag (\hat{W}_k) \langle k \rangle^n_{\nu, \tau}}{\Real (\hat{W}_k)  } e_k,
    \end{equation*}
    the uniqueness of the value $\mathrm{c}_k$ in the sequence $(\mathrm{c}_\ell)_{\ell \in \dN}$ ensures the non-resonance condition, and the transversality condition follows directly from the linear dependence of the eigenvalues $\pm \mathrm{i}  \frac{\Imag (\hat{W}_k) \langle k \rangle^n_{\nu, \tau}}{\Real (\hat{W}_k)}$ in $\lambda$. The expansion of the solution follows from~\cite[Cor.~I.8.3]{kielhofer2012bifurcation}.
\end{proof}

%% file: InertialManifold.tex
\begin{lem}[Absorption]
    \label{lem:Absorption}
    For $\vtau >0, \vnu >0, \vn \geq 0, \vn \geq \vs$, the ball $B_{\dL^2}(0,\rho(r))$ in $\dL^2$ with radius
    \begin{equation}
        \rho(r) := 2^{\tfrac{(n_{\max}-1)}{2}}(\nu_{\min} \wedge \tau_{\min})^{-\frac{1}{2}}\tau_{\min}^{-\frac{1}{2}}(\|W\|_{\dL^2\to \dH^{-\vs}} \|\sigma\|_{\infty} + r+1),
    \end{equation}
    is absorbing for $\Psi$, uniformly with respect to inputs in $B_{\W^\In V}(0,r)$, for any $r\geq0$. 
\end{lem}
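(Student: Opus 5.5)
The plan is to apply the energy inequality~\eqref{est:LinearEqAbs} of Lemma~\ref{lem:LinearEqAbs} to the solution $U(t)=\Psi(t;U_0,\I)$. We view it as the solution of the linear equation~\eqref{eq:LinearEqAbs} with source term $G(t)=R(U(t),F_\In)$, where $F_\In=\W^\In\I$ satisfies $\|F_\In\|_{\dH^{-\vs}}\le r$. By Lemma~\ref{lem:LipNBoundedR}, this source term obeys the bound $\|G(t)\|_{\dH^{-\vs}}\le M_r:=\|W\|_{\dL^2\to\dH^{-\vs}}\|\sigma\|_\infty + r$ uniformly in time and uniformly in the input. Dropping the nonnegative gradient term in~\eqref{est:LinearEqAbs} gives, in the distributional sense,
\begin{equation*}
    \frac{\dd}{\dd t}\|U(t)\|^2_{\dL^2} + \tau_{\min}\|U(t)\|^2_{\dL^2} \le c_1 M_r^2, \qquad c_1 = 2^{(n_{\max}-1)}(\tau_{\min}\wedge\nu_{\min})^{-1}.
\end{equation*}

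Next we apply Gr\"onwall's lemma to this differential inequality. Since $t\mapsto\|U(t)\|^2_{\dL^2}$ is absolutely continuous by the Lions--Magenes lemma used in the proof of Lemma~\ref{lem:LinearEqAbs}, we get
\begin{equation*}
    \|U(t)\|^2_{\dL^2} \le e^{-\tau_{\min} t}\|U_0\|^2_{\dL^2} + \frac{c_1 M_r^2}{\tau_{\min}}\bigl(1-e^{-\tau_{\min}t}\bigr).
\end{equation*}
The asymptotic radius is therefore $\rho_\infty:=\sqrt{c_1/\tau_{\min}}\,M_r$. This equals $2^{(n_{\max}-1)/2}(\nu_{\min}\wedge\tau_{\min})^{-1/2}\tau_{\min}^{-1/2}M_r$, which is exactly $\rho(r)$ with the ``$+1$'' removed. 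The margin $\delta_r:=\rho(r)-\rho_\infty = 2^{(n_{\max}-1)/2}(\nu_{\min}\wedge\tau_{\min})^{-1/2}\tau_{\min}^{-1/2}>0$ is positive and independent of the input, and it is what turns the asymptotic bound into absorption in finite time.

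For the absorbing property we fix a bounded set $B\subset B_{\dL^2}(0,R_0)$. The Gr\"onwall bound shows that $\|U(t)\|^2_{\dL^2} \le e^{-\tau_{\min}t}R_0^2+\rho_\infty^2$, so $\|U(t)\|_{\dL^2}<\rho(r)$ as soon as $e^{-\tau_{\min}t}R_0^2 < \rho(r)^2-\rho_\infty^2$. This holds for all $t\ge t_0(R_0,r):=\tau_{\min}^{-1}\log\bigl(R_0^2/(\rho(r)^2-\rho_\infty^2)\bigr)\vee 0$, and $t_0$ depends only on $R_0$ and $r$, not on the particular input with $\|\W^\In\I\|_{\dH^{-\vs}}\le r$. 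The same estimate, applied with $\|U_0\|\le \rho(r)$, also shows that the ball is positively invariant. Indeed, the right-hand side is a convex combination of $\|U_0\|^2\le\rho(r)^2$ and $\rho_\infty^2<\rho(r)^2$.

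The only delicate point is justifying the energy inequality for the weak solution. The computation is formal for smooth $U$, so we take it from Lemma~\ref{lem:LinearEqAbs}, which already establishes~\eqref{est:LinearEqAbsInt} for weak solutions with a source in $L^2_{t,loc}(\dH^{-\vs})$. Our source $G$ is admissible because it is bounded in $\dH^{-\vs}$ and continuous in time, as $U\in C_t(\dL^2)$ and $R$ is Lipschitz. If the distributional form is inconvenient, we instead run the Gr\"onwall argument on the integral form~\eqref{est:LinearEqAbsInt} applied on subintervals $[t_1,t_2]$, which yields the same exponential bound.
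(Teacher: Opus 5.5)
Your proof is correct and follows the paper's argument. The paper's one-line proof is exactly the energy inequality \eqref{est:LinearEqAbs} with a uniform bound on the source term, followed by Gr\"onwall's lemma. Your observation that $\sqrt{c_1/\tau_{\min}}\,(\|W\|_{\dL^2\to\dH^{-\vs}}\|\sigma\|_\infty + r)$ is the asymptotic radius, with the ``$+1$'' in $\rho(r)$ giving the margin for entry in finite time, is precisely what that line encodes.

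The only difference is the bound you use for the source: you take the $\dH^{-\vs}$ bound from Lemma~\ref{lem:LipNBoundedR}, whereas the paper cites \eqref{est:ARBounded}. Your choice is the one that matches the right-hand side of \eqref{est:LinearEqAbs} and reproduces the stated radius exactly.
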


The proof directly follows from estimates~\eqref{est:ARBounded}
and~\eqref{est:LinearEqAbs} together with Gr\"onwall's lemma.

\begin{lem}[Spectral Gap Estimate]
    \label{lem:SpectralGap}
    Consider the operator $(\vtau + \vnu (-\Delta)^{\vn}) : \dH^{2\vn} \to \dL^2$, with $\vtau > 0, \vnu > 0, \vn \geq 2$, and write $\Sigma (\vtau + \vnu (-\Delta)^{\vn}) = \{0<\lambda_1 \leq \cdots \leq \lambda_N \to \infty \}$ counting multiplicity. Let $0\leq \beta < 1$, suppose that $(1-\beta)n_{\min}>1$ and denote $\delta = 1/((1-\beta)n_{\min}-1)$. Then, for all $K_1, K_2 \geq 0$, there exists an integer $N$ such that
    \begin{equation}
        \label{est:SpectralGapCondition}
        \lambda_{N+1} > K_1
        \;\text{ and }\;
        \lambda_{N+1} - \lambda_{N} > K_2(\lambda_{N+1}^\beta + \lambda_{N}^\beta),
    \end{equation}
    together with the bound
    \begin{equation}
        \label{est:SpectrlGapEstimate}
        N \leq (c_0 d)^{1+\delta}\left(\nu_{\min}^{-(1-\beta)\delta} \vee \tau_{\max}^{-(1-\beta)\delta}\vee 1\right)\left(\tau_{\max}^{1/n_{\min}} \vee K_1^{1/n_{\min}} \vee K_2^\delta\right),
    \end{equation}
    where $c_0\geq 1$ is an absolute constant independent of the parameters.
\end{lem}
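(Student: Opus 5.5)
The spectrum of $\vtau + \vnu(-\Delta)^{\vn}$ on $\dL^2$ is explicit: it is the union over layers $\ell=1,\dots,d$ of the values $\mu_\ell(\xi) := \tau_\ell + \nu_\ell(4\pi^2|\xi|^2)^{n_\ell}$, $\xi\in\dZ^2$, counted with multiplicity. The plan is a pigeonhole argument on a counting function combined with Weyl's law on $\Torus^2$. Let $\mathcal{N}(\lambda) := \#\{\text{eigenvalues} \leq \lambda\}$. Lattice point counting gives $\#\{\xi : |\xi|\leq R\} \leq c(1+R)^2$. Hence for each layer, $\#\{\xi:\mu_\ell(\xi)\leq\lambda\} \leq c\,(1 + \nu_\ell^{-1/n_\ell}\lambda^{1/n_\ell})$, which vanishes when $\lambda<\tau_\ell$. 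Summing over layers yields
\begin{equation*}
    \mathcal{N}(\lambda) \leq c\,d\,\bigl(1 + (\lambda/\nu_{\min})^{1/n_{\min}}\bigr) \quad\text{for } \lambda\geq 1 ,
\end{equation*}
up to harmless factors. The key point is that $n_{\min}\geq 2$ makes $\mathcal{N}$ grow at most like $\lambda^{1/n_{\min}}$, which is slower than the growth $\lambda^{1-\beta}$ of the required gap width when $(1-\beta)n_{\min}>1$.

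The gap is found as follows. Fix a large level $\Lambda \geq K_1 \vee \tau_{\max}\vee 1$ and look at the window $[\Lambda, 2\Lambda]$. Any gap between consecutive distinct eigenvalues $\lambda_N<\lambda_{N+1}$ inside this window, with both ends in $[\Lambda,2\Lambda]$ or with $\lambda_{N+1}$ the first eigenvalue above $\Lambda$, satisfies $K_2(\lambda_N^\beta+\lambda_{N+1}^\beta)\leq 2^{1+\beta}K_2\Lambda^\beta$ (modulo handling the gap straddling $2\Lambda$). The window contains at most $\mathcal{N}(2\Lambda)+1$ distinct eigenvalues, so it is cut into at most $\mathcal{N}(2\Lambda)+1$ subintervals of total length $\Lambda$. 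By pigeonhole, one of them has length at least $\Lambda/(\mathcal{N}(2\Lambda)+1)$. It therefore suffices that
\begin{equation*}
    \frac{\Lambda}{c\,d\,(1+(2\Lambda/\nu_{\min})^{1/n_{\min}})} > 4 K_2 \Lambda^{\beta}.
\end{equation*}
This holds once $\Lambda^{(1-\beta)-1/n_{\min}} \gtrsim d K_2 (\nu_{\min}^{-1/n_{\min}}\vee 1)$, that is,
\begin{equation*}
    \Lambda \gtrsim \bigl(dK_2(\nu_{\min}^{-1/n_{\min}}\vee1)\bigr)^{n_{\min}\delta},
\end{equation*}
since $1/((1-\beta)-1/n_{\min}) = n_{\min}\delta$. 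One then takes $N$ as the index just below the chosen gap, so that $\lambda_{N+1}>\Lambda\geq K_1$. The leftmost subinterval, from $\Lambda$ to the first eigenvalue, should be excluded or handled by also including the eigenvalue immediately below $\Lambda$; this costs only a constant.

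Next comes the dimension bound: $N \leq \mathcal{N}(2\Lambda) \leq c\,d\,(1+(2\Lambda/\nu_{\min})^{1/n_{\min}})$. Substituting the choice
\begin{equation*}
    \Lambda = \max\bigl(K_1,\ \tau_{\max},\ 1,\ C(dK_2)^{n_{\min}\delta}(\nu_{\min}^{-\delta}\vee1)\bigr)
\end{equation*}
gives $\Lambda^{1/n_{\min}} \lesssim K_1^{1/n_{\min}}\vee\tau_{\max}^{1/n_{\min}}\vee (dK_2)^{\delta}(\cdots)$. After multiplying by $d\,\nu_{\min}^{-1/n_{\min}}$, this yields the form $(c_0d)^{1+\delta}(\text{parameter factor})(\tau_{\max}^{1/n_{\min}}\vee K_1^{1/n_{\min}}\vee K_2^\delta)$.

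The main obstacle is the bookkeeping needed to make the $\nu_{\min}$ exponent come out exactly as $\nu_{\min}^{-(1-\beta)\delta}$, as stated in \eqref{est:SpectrlGapEstimate}. Note that $1/n_{\min}+\delta/n_{\min} = (1-\beta)\delta$, which is precisely the combination produced by the product of $\nu_{\min}^{-1/n_{\min}}$ from $\mathcal{N}$ with $\nu_{\min}^{-\delta/n_{\min}}$ from $\Lambda^{1/n_{\min}}$. I would verify this identity first and then absorb the $\tau_{\max}$ and $1$ alternatives via the maxima. A secondary technicality is that layers have different $n_\ell$; this is handled by bounding each $\lambda^{1/n_\ell}\leq\lambda^{1/n_{\min}}$ for $\lambda\geq1$ and $\nu_\ell^{-1/n_\ell}\leq \nu_{\min}^{-1/n_{\min}}\vee 1$.
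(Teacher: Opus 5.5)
Your proposal is correct and follows essentially the same route as the paper. The shared steps are: a lattice-point (Weyl) bound on the counting function; pigeonhole on the subintervals cut out of $[T,2T]$ by the eigenvalues, with $\lambda_N$ taken as the largest eigenvalue $\le u$ for the long subinterval $[u,v]$ (your second option for the leftmost piece); the choice $T = 1\vee\tau_{\max}\vee K_1\vee(24c_1K_2)^{n_{\min}\delta}$; and the identity $(1+\delta)/n_{\min}=(1-\beta)\delta$ when substituting into $N\le c_1(2T)^{1/n_{\min}}$. The straddling case you leave open is settled in the paper by splitting on whether $\lambda_{N+1}\le 4T$, in which case $\lambda_{N+1}^\beta\le 4T^\beta$, or $\lambda_{N+1}>4T$, in which case $\lambda_N\le 2T<\lambda_{N+1}/2$ and the ratio exceeds $\tfrac14\lambda_{N+1}^{1-\beta}$; the monotonicity of $x\mapsto (x-a)/(x^\beta+a^\beta)$ on $x>a$ would close it equally well.
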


\begin{proof}
    We begin by decomposing the spectrum of the operator as
    \begin{equation*}
        \Lambda := \Sigma (\vtau + \vnu (-\Delta)^{\vn}) = \bigcup_{\ell=1}^d\Sigma (\tau_\ell + \nu_\ell (-\Delta)^{n_\ell}),
    \end{equation*}
    with $\Sigma (\tau_\ell + \nu_\ell (-\Delta)^{n_\ell}) = \{\tau_\ell + \nu_\ell (4\pi^2|\xi|^2)^{n_\ell} , \xi \in \dZ^2\}$. We set $N(T) := \# (\Lambda \cap [0,T])$ and $N^\ell(T) := \# (\Sigma (\tau_\ell + \nu_\ell (-\Delta)^{n_\ell}) \cap [0,T])$, both counting multiplicity. So that, $N(T) = \sum_{\ell = 1}^d N^\ell(T)$. From Weyl's law we obtain the upper bound, 
    \begin{equation*}
        N^\ell(T) \leq \pi \left(\tfrac{1}{2\pi}\left(\frac{T-\tau_\ell}{\nu_\ell}\right)^{1/2 n_\ell}+ \frac{\sqrt{2}}{2}\right)^2, \; \forall T\geq \tau_\ell.
    \end{equation*}
    This can be obtained similarly with Gauss's argument for the Gauss circle problem, as
    \begin{equation*}
        N_0(R) := \#\{\xi \in \dZ^2, |\xi|\leq R\} \leq \pi (R+1/\sqrt{2})^2, \text{ for all } R\geq 0.
    \end{equation*}
    We then derive the following upper bound on $N(T)$, for all $T \geq \tau_{\max}$
    \begin{equation*}
        N(T) \leq d\pi \left(\nu_{\min}^{-1/n_{\min}}\vee \tau_{\max}^{-1/n_{\min}}\vee 1\right)T^{1/n_{\min}} =: c_1 T^{1/n_{\min}}.
    \end{equation*}
    Now, take $T\geq \tau_{\max} \vee 1$ and consider
    \begin{equation*}
        n = \# (\Lambda \cap [T,2T]) \leq N(2T) \leq c_1 (2T)^{1/n_{\min}},
    \end{equation*}
    and as $c_1 \geq 1$, we can derive $n+1 < 3c_1 T^{1/n_{\min}}$. These $n$ eigenvalues subdivide $[T,2T]$ into at most $n+1$ intervals with disjoint interiors. One of these sub-intervals, say $[u,v]$, has a length bigger than the average length, 
    \begin{equation*}
        v-u \geq \frac{T}{n+1} > \frac{1}{3c_1}T^\gamma,
    \end{equation*}
    where $\gamma = 1-1/n_{\min} < 1$. We denote $\lambda_N = \max \{\lambda \in \Lambda, \lambda \leq u\}$ and $\lambda_{N+1} = \min \{\lambda \in \Lambda, \lambda \geq v\}$. By definition, $\lambda_N<\lambda_{N+1}$ are successive eigenvalues without other elements of $\Lambda$ in between. We also have by construction $\lambda_{N+1} \geq T$, and from the previous estimate on the length of $[u,v]$ we have,
    \begin{equation*}
        \lambda_{N+1} - \lambda_N > \frac{1}{3c_1}T^\gamma.
    \end{equation*}
    We now consider two cases. Either $\lambda_{N+1}\leq 4T$, which leads to
    \begin{equation*}
        \frac{\lambda_{N+1} - \lambda_N}{\lambda_{N+1}^\beta+\lambda_N^\beta} > \frac{\lambda_{N+1} - \lambda_N}{2\lambda_{N+1}^\beta} > \frac{1}{24c_1}T^{\gamma-\beta}.
    \end{equation*}
    Or $\lambda_{N+1}>4T$, which leads to $\lambda_{N+1} >2T \geq \lambda_N$, so that $\lambda_{N+1} - \lambda_N > \lambda_{N+1}/2$ and
    \begin{equation*}
        \frac{\lambda_{N+1} - \lambda_N}{\lambda_{N+1}^\beta+\lambda_N^\beta} > \tfrac{1}{4}\lambda^{1-\beta} > \tfrac{1}{4}(4T)^{1-\beta} > \frac{1}{4c_1}T^{1-\beta}.
    \end{equation*}
    So in both cases, we have the bound,
    \begin{equation*}
        \frac{\lambda_{N+1} - \lambda_N}{\lambda_{N+1}^\beta+\lambda_N^\beta} > \frac{1}{24c_1}T^{\gamma-\beta}.
    \end{equation*}
    Finally, for $K_1, K_2 \geq 0$, take $T = 1 \vee \tau_{\max} \vee K_1 \vee (24 c_1 K_2)^{1/(\gamma-\beta)}$. The above ensures the existence of $N\geq 1$, such that $\lambda_N< \lambda_{N+1}$ satisfy the spectral gap stated in the statement. The stated estimate is obtained by substituting this value of $T$ into the estimate $N \leq c_1 (2T)^{1/n_{\min}}$.
    
\end{proof}

We now prove Theorem~\ref{thm:InertialManifold} by applying~\cite[Ch.~VIII, Thm~3.1]{temam2012infinite} for each fixed $F \in \W^\In B_V(0,r)$. Under appropriate assumptions, the Lyapunov--Perron functional is defined on the same eigenspace for all $F$, so that we have a function $\Phi : \P_N\dL^2 \times B_{r} \to (I-\P_N)\dL^2$, and we only need to prove the Lipschitz property in the second variable.

\begin{proof}[Proof of Theorem~\ref{thm:InertialManifold}]
    
    We fix a ball of radius $r_\In$ in $V$ that we denote by $B_{r}:= \W^\In B_V(0,r_\In) \subset B_{\W^\In V}(0,r) $, with $r=  \|\W^\In\|_{V\to \dH^{-\vs}} r_{\In}$ and consider the equation
    \begin{equation}
        \label{eq:AbstractModelRFin}
        \partial_t U + AU = R(U,F_\In) \hspace{1em} \text{ in } (0,+\infty) \times \Omega,
    \end{equation}
    for $F_\In \in B_{r}$, with $R$ as defined in Lemma~\ref{lem:LipNBoundedR} and $A = (\vtau + \vnu (-\Delta)\vn)$.

     We then define $\gamma= \max_{\ell = 1}^d \left(\frac{s_\ell}{2 n_\ell}\right)$. By assumption either $n_{\min} = 2$ and $\vs<\vn$ or $n \geq 3$ and $\vs\leq \vn$ so that we have in both cases
    \begin{equation}
        s < 2(1-1/n_{\min}) n,
    \end{equation}
    thus $0 \leq \gamma\leq \tfrac{1}{2}$ and we obtain that $\gamma<1-1/n_{\min}$, or equivalently, $(1-\gamma)n_{\min} -1 >0$.  Then Lemma~\ref{lem:SpectralGap} applies for arbitrarily large $K_1, K_2 \geq 0$. We also have, by definition that $\vs \leq 2 \gamma\vn$, and so Lemma~\ref{lem:FractionalResolventEstimateA} applies and $\|A^{-\gamma}\|_{\dH^{-\vs}\to\dL^2}$ is bounded. Together with Lemma~\ref{lem:LipNBoundedR}, this leads to,
    \begin{equation}
        \label{est:ARLipschitz}
        \|A^{-\gamma}(R(U_1, F) - R(U_2, F))\|_{\dL^2} \leq 2^{\gamma(n_{\max}-1)}(\tau_{\min}\wedge\nu_{\min})^{-\gamma} L_1 \|U_1 - U_2\|_{\dL^2},
    \end{equation}
    for all $U_1, U_2 \in \dL^2, F\in \W^\In V$, where $L_1 = \|\W\|_{\dL^2\to\dH^{-\vs}}\|\sigma'\|_{\infty}\|\upalpha\|_{\dL^\infty}$ and
    \begin{equation}
        \label{est:ARBounded}
        \|A^{-\gamma}R(U, F)\|_{\dL^2} \leq 2^{\gamma(n_{\max}-1)}(\tau_{\min}\wedge\nu_{\min})^{-\gamma} (\|\W\|_{\dL^2\to\dH^{-\vs}}\|\sigma\|_{\infty}+r),
    \end{equation}
    for all  $U\in \dL^2, F\in B_{r}$.

    Defining
    \begin{equation*}
        M_1(r) := \sup_{F\in B_{r}} \sup_{U \in \dL^2} \|A^{-\gamma}R(U, F)\|_{\dL^2}
    \end{equation*}
    and
    \begin{equation*}
        M_2 := 2^{n_{\max}/2 +1}(\tau_{\min}\wedge\nu_{\min}\wedge 1)^{-\gamma}(\tau_{\max}\vee L_1\vee 1),
    \end{equation*}
    we can ensure that
    \begin{equation}
        \label{est:ConstantM2BoundsTemam}
        M_2 \geq 2\frac{M_1(r)}{\rho(r)}+2^{\gamma(n_{\max}-1)}(\tau_{\min}\wedge\nu_{\min})^{-\gamma}L_1.
    \end{equation}

    Setting $K_1 = M_2^2(20 + 4\kappa)^{2} =: c_1 M^2_2 $  and  $K_2 = 72 M_2=: c_2 M_2$ in Lemma~\ref{lem:SpectralGap},  with $\kappa$ the absolute constant from~\cite[Ch.~VIII]{temam2012infinite}, denoted $\kappa(\gamma)$, which we can bound by $\kappa(\gamma) \leq 2$ independently of $\gamma$, we obtain that there exists an integer $N$ satisfying
    \begin{equation*}
        N \leq (c_0d)^{1+\delta_0}(\nu_{\min}\wedge\tau_{\min}\wedge 1)^{-(1-\gamma)\delta_0}(\tau_{\max}^{1/n_{\min}}\vee (c_1 M^2_2)^{1/n_{\min}}\vee (c_2 M_2)^\delta_0),
    \end{equation*}
    where $\delta_0 = 1/((1-\gamma)n_{\min}-1)$, such that the consecutive eigenvalues $\lambda_{N+1} =: \Lambda$ and $\lambda_N =: \lambda$ of operator $A$ satisfy
    \begin{equation}
        \label{hyp:InertialManifoldSpecGrowth}
        \lambda_{N+1} > M_2^2\left(\frac{1+l}{l}+ 4\kappa + 11\right)^{2}
    \end{equation}
    and 
    \begin{equation}
        \label{hyp:InertialManifoldSpecGap}
        \lambda_{N+1} - \lambda_N > 8 \frac{1+l}{l}M_2 (\lambda^\gamma_{N+1} + \lambda^\gamma_N)
    \end{equation}
    with $l= 1/8$.

    Furthermore, since $(1-\gamma)n_{\min}-1>0$ and $n_{\min} \geq 2$ we have that $n_{\min}-4\gamma >0$ and 
    \begin{equation}
        \delta := 2/(n_{\min}-4\gamma) \geq \delta_0 \vee 2/n_{\min}
    \end{equation}
    and using the definition of $M_2$, we can rewrite the upper bound on $N$ as
    \begin{equation}
        \label{est:DimensionalEstimateinProof}
         N \leq (\kappa_0 d)^{1+\delta}2^{\frac{n_{\max}}{2}\delta}(\tau_{\min} \wedge\nu_{\min}\wedge 1)^{-\delta} (\tau_{\max}\vee L_1 \vee 1)^\delta.
    \end{equation}

    Let $\P \colon \dL^2\to \dL^2$ denote the projection operator onto the eigenspace of the first $N$ eigenvalues of $A$, and set $\Q := (\mathrm{I}-\P) \colon \dL^2 \to \dL^2$. Enlarging if necessary the constant in the spectral growth condition~\eqref{hyp:InertialManifoldSpecGrowth}, estimates~\eqref{est:ARLipschitz}, \eqref{est:ARBounded}, \eqref{est:ConstantM2BoundsTemam}, \eqref{hyp:InertialManifoldSpecGrowth}, \eqref{hyp:InertialManifoldSpecGap} and Lemma~\ref{lem:Absorption} allow us to apply~\cite[Ch.~VIII, Thm~3.2]{temam2012infinite}: for any $F_\In \in B_{r}$, there exists $\Phi_{F_\In} \colon \P \dL^2 \to \Q \dL^2$ such that the graph of $\Phi_{F_\In}$ is an inertial manifold for the semigroup $\Psi(\cdot\,;\cdot,F_\In)$. 
    
    The choice of constants above ensures that all the $\{\Phi_{F_\In}, F_\In \in B_{r}\}$ are defined on the same spaces. We thus obtain a function $\Phi : \P \dL^2 \times B_{r} \to \Q \dL^2$, and according to~\cite[Ch.~VIII, Thm~3.2]{temam2012infinite}, the function is Lipschitz in $y\in \P \dL^2$, uniformly in $F_{\In}\in B_{r}$,  with Lipschitz constant $l=1/8$,
    \begin{equation*}
        |\Phi(y_1, F) - \Phi(y_2, F)| \leq l |y_1 - y_2|.
    \end{equation*}
    
    Furthermore, the function satisfies the following estimates: there exists $b \geq 0$, such that
    \begin{equation*}
        \sup_{y\in \P\dL^2, F_\In \in B_{r}}|\Phi(y,F_{\In})| \leq b,
    \end{equation*}

    and the support of $\Phi$ is bounded in $P\dL^2\times B_{r}$,
    \begin{equation*}
        \supp \Phi \subset \{(y,F_\In)\in P\dL^2 \times B_{r}, |y|< 2\rho(r) \}.
    \end{equation*}

    We have that $\mathcal{M}_{F} = \{(y+\Phi(y,F)) \colon y \in B(0,\rho(r))\}$ is an inertial manifold for $\Psi(\cdot\,;\cdot, F)$ given $F\in B_{r}$.

    Finally, we have the following representation of the function $\Phi(\cdot, F_{\In})$; for some localization function $\theta:\dR_+ \to [0,1]$, such that $\theta(x) = 1$ for $x\in [0,1]$,  and $\theta(x) = 0$ for $x>2$, with $\|\theta'\|_{\infty}\leq 2$, we set $\theta_{\rho}(x) = \theta(x/\rho(r))$ and $R_\theta (U, F) =  \theta_{\rho}(|U|)R(U,F)$. Then $\Phi(\cdot, F_{\In})$ admits the Lyapunov--Perron representation
    \begin{equation*}
        \Phi(y_0,F_{\In}) = \int_{-\infty}^0 e^{t A \Q}\Q (R_\theta(u,F_{\In}))\dd t,
    \end{equation*}
    where $u(t) = y(t) + \Phi(y(t),F_{\In})$ and $y \in C(\dR, \P\dL^2)$ is the unique solution of the ordinary differential equation
    \begin{equation}
        \label{eq:ODEInertialManifold}
        \frac{\dd}{\dd t} y + A y = \P (R_\theta(y + \Phi(y,F_{\In}),F_{\In})).
    \end{equation}
    Furthermore, we have the Lipschitz estimate
    \begin{equation}
        \label{est:LipschitLocalisationRtheta}
        \|A^{-\gamma}(R_\theta(U_1, F_1) - R_\theta(U_2, F_2))\|_{\dL^2} \leq M_2 \|U_1-U_2\|_{\dL^2} + c_\gamma \|F_1 - F_2 \|_{\dH^{-\vs}},
    \end{equation}
    for all $U_1, U_2 \in \P \dL^2$, $F_1, F_2 \in B_{r}$, where $c_\gamma = 2^{\gamma(n_{\max}-1)}(\tau_{\min}\wedge\nu_{\min})^{-\gamma}$, see~\cite[Ch.~VIII, Lem.~2.1]{temam2012infinite}.

    We only need to prove that $\Phi$ is Lipschitz in $F$ uniformly in $y$. To this end, take any $y_0\in \P\dL^2$, $F_1, F_2 \in B_{r}$, and consider $y_i (t) = y_i(t;y_0, \Phi(\cdot, F_i))$, the unique solution associated with the equation \eqref{eq:ODEInertialManifold}, denote $u_i = y_i + \Phi(y_i,F_i)$ and the difference $y = y_1 - y_2$. Then $y$ is a solution of the ODE
    \begin{equation*}
        \frac{\dd}{\dd t} y + A y = P(R_\theta(u_1,F_1) - R_\theta(u_2,F_2)).
    \end{equation*}
    
    Testing with $y$ gives
    \begin{align*}
        \tfrac{1}{2}\frac{\dd}{\dd t} |y|^2 + |A^{1/2} y|^2 = \langle A^{-\gamma} P(R_\theta(u_1,F_1) - R_\theta(u_2,F_2)), A^{\gamma} y \rangle.
    \end{align*}

    From the uniform Lipschitzness of $\Phi$, we have that
    \begin{equation*}
        |u_1 - u_2| \leq |y| + |\Phi(y_1, F_1) - \Phi(y_2, F_2)| \leq (1+l) |y| + |\osc_F (\Phi)|,
    \end{equation*}
    where $|\osc_F (\Phi)| = \sup_{y\in P\dL^2} |\Phi(y, F_1) - \Phi(y, F_2)|$. 
    
    Using the spectral results on $A$ from~\cite[Ch.~VIII, (3.5)--(3.6), p.~513]{temam2012infinite} and the Lipschitz estimate~\eqref{est:LipschitLocalisationRtheta}, we obtain the following inequality:
    \begin{equation*}
         \tfrac{1}{2}\frac{\dd}{\dd t} |y|^2 + \left|A^{1/2} y\right|^2 \geq -(M_2 ((1+l) |y|+ |\osc_F (\Phi)|) + c_\gamma \|\overline{F}\|_{\dH^{-\vs}}) |A^\gamma y|
    \end{equation*}
    where $\overline{F} = F_1 - F_2$, this leads to the estimate
    \begin{equation}
        \label{est:ODEInertialManifold}
        \frac{\dd}{\dd t} |y| \geq -\Tilde{\lambda} |y| - \lambda^\gamma(M_2|\osc_F (\Phi)|+ c_\gamma \|\overline{F}\|_{\dH^{-\vs}}),
    \end{equation}
    where $\Tilde{\lambda} := \lambda + M_2 (1+l) \lambda^\gamma $.

    We now use the representation of the function $\Phi$ as follows,

    \begin{align*}
        |\Phi(y_0,F_1) - \Phi(y_0, F_2)| &= \left|\int_{-\infty}^0 e^{t A Q}Q (R_\theta (u_1, F_1) - R_\theta(u_2, F_2))\dd t \right|,\\
        &\leq \left|\int_{-\infty}^0 e^{t A Q}Q (R_\theta (y_1 + \Phi(y_1, F_1), F_1) - R_\theta(y_2 + \Phi(y_2, F_1), F_1))\dd t \right|\\
        &\hspace{1em}+ \left|\int_{-\infty}^0 e^{t A Q}Q (R_\theta(y_2 + \Phi(y_2, F_1), F_1) - R_\theta(y_2 + \Phi(y_2, F_2), F_2))\dd t \right|\\
        & \hspace{1em} =: |\mathbf{I}_1|  +  |\mathbf{I}_2|,
    \end{align*}

    From the Lipschitz estimate of $R_\theta$, together with the integral estimate of~\cite[Ch.~VIII, Lem.~3.2]{temam2012infinite}, we obtain
    \begin{equation*}
        |\mathbf{I}_2| \leq \kappa \Lambda^{\gamma -1}(M_2|\osc_F (\Phi)|+ c_\gamma \|\overline{F}\|_{\dH^{-\vs}}).
    \end{equation*}
    
    For $\mathbf{I}_1$, applying~\cite[Ch.~VIII, Lem.~2.3]{temam2012infinite}, we can identify this integral as $\mathbf{I}_1 = \Tilde{z}(0)$, where $\Tilde{z}$ is the unique bounded global solution in $C(\dR, Q\dL^2)$ of 
    \begin{equation*}
        \frac{\dd}{\dd t} \Tilde{z} + A \Tilde{z} = Q (R_\theta (y_1 + \Phi(y_1, F_1), F_1) - R_\theta(y_2 + \Phi(y_2, F_1), F_1)).
    \end{equation*}
    From the Lipschitz estimate on $R_\theta$, we obtain
    \begin{equation*}
        \tfrac{1}{2}\frac{\dd}{\dd t}|\Tilde{z}|^2  + |A^{1/2}\Tilde{z}|^2 \leq M_2 (1+l)|y||A^\gamma\Tilde{z}|
    \end{equation*}
    and the spectral estimates lead to
    \begin{equation*}
        \frac{\dd}{\dd t}|\Tilde{z}| \leq - \Lambda |\Tilde{z}| + M_2 \Lambda^\gamma(1+l) |y|.
    \end{equation*}

    When $|\Tilde{z}|>|y|$ we have

    \begin{equation}
        \label{est:ODEInertialManifold2}
        \frac{\dd}{\dd t}|\Tilde{z}| + \Tilde{\Lambda}|\Tilde{z}| \leq 0,
    \end{equation}
    with $\Tilde{\Lambda} = \Lambda - M_2 \Lambda^\gamma(1+l)$, and $\Tilde{\Lambda}>0$ from assumption~\eqref{hyp:InertialManifoldSpecGap}.
    Recall that, by definition, $y(0) = y_1(0) - y_2(0) = 0$. If $\Tilde{z}(0) = 0$, then $|\mathbf{I}_1| = 0$ and the property is proved. If $\Tilde{z}(0) \neq 0$, then, by continuity, we have that $|\Tilde{z}(t)|>|y(t)|$, for $t<0$, with $|t|$ sufficiently small. We then have two cases, either $|\Tilde{z}(t)|>|y(t)|$ for all $t<0$ and from Gr\"onwall's lemma on estimate~\eqref{est:ODEInertialManifold2}, we obtain
    \begin{equation*}
        |\Tilde{z}(0)| \leq |\Tilde{z}(t)|e^{\Tilde{\Lambda}t}, \forall t <0
    \end{equation*}
    and as $\Tilde{z}$ is globally bounded (see~\cite[Ch.~VIII, Lem.~2.3]{temam2012infinite}) we obtain that, $|\Tilde{z}(0)| = 0$, leading to a contradiction. In the second case, we obtain that $|\Tilde{z}(t)| > |y(t)|$ on $(t_0,0]$, and $|\Tilde{z}(t_0)| = |y(t_0)|$. Subtracting estimate~\eqref{est:ODEInertialManifold} from estimate~\eqref{est:ODEInertialManifold2}, we obtain the inequality

    \begin{align*}
        \frac{\dd}{\dd t}(|\Tilde{z}| - |y|) + (\Tilde{\Lambda} - \Tilde{\lambda}) (|\Tilde{z}| - |y|) & \leq (\Tilde{\Lambda} - \Tilde{\lambda}) (|\Tilde{z}| - |y|) - \Tilde{\Lambda}|\Tilde{z}| + \Tilde{\lambda}|y| + \lambda^\gamma(M_2|\osc_F (\Phi)|+ c_\gamma \|\overline{F}\|_{\dH^{-\vs}}),\\
        &\leq - \Tilde{\lambda} (|\Tilde{z}| - |y|) +\lambda^\gamma(M_2|\osc_F (\Phi)|+ c_\gamma \|\overline{F}\|_{\dH^{-\vs}}),\\
        &\leq \lambda^\gamma(M_2|\osc_F (\Phi)|+ c_\gamma \|\overline{F}\|_{\dH^{-\vs}}).
    \end{align*}

    Applying the Gr\"onwall lemma on the above between $t_0$ and $0$, and using that at $t_0$, $|\Tilde{z}(t_0)| - |y(t_0)| = 0$, and that $|y(0)| = 0$, we obtain
    \begin{equation*}
        |\mathbf{I}_1| = |\Tilde{z}(0)| \leq \int^0_{t_0} e^{t(\Tilde{\Lambda}-\Tilde{\lambda})}\dd t \lambda^\gamma(M_2|\osc_F (\Phi)|+ c_\gamma \|\overline{F}\|_{\dH^{-\vs}}) \leq \frac{1}{\Tilde{\Lambda}-\Tilde{\lambda}}\lambda^\gamma(M_2|\osc_F (\Phi)|+ c_\gamma \|\overline{F}\|_{\dH^{-\vs}}).
    \end{equation*}
    Summing up the estimates on $|\mathbf{I}_1|$ and $|\mathbf{I}_2|$ we obtain
    \begin{equation*}
        |\Phi(y_0,F_1) - \Phi(y_0, F_2)| \leq \eta |\osc_F (\Phi)| + \eta c_\gamma \|\overline{F}\|_{\dH^{-\vs}}
    \end{equation*}
    where $\eta = M_2 (\kappa \Lambda^{\gamma -1} + \frac{1}{\Tilde{\Lambda}-\Tilde{\lambda}} \lambda^\gamma) $, and from assumptions~\eqref{hyp:InertialManifoldSpecGrowth}--\eqref{hyp:InertialManifoldSpecGap} and as $M_2 \geq 1$, the above holds with $\eta \leq 1/2$. Taking the supremum in $y_0$ and rearranging the terms, we obtain
    \begin{equation*}
        \|\Phi(y_0,F_1) - \Phi(y_0, F_2)\|_{\dL^2} \leq c_\gamma \|F_1 - F_2\|_{\dH^{-\vs}}, \text{ for all } y_0 \in P\dL^2, F_1, F_2 \in B_{r}.
    \end{equation*}
    This establishes the existence of a Lipschitz function $\Phi \colon P\dL^2 \times B_r \to Q \dL^2$ whose graph is an inertial manifold for the equation~\eqref{eq:AbstractModelFin}. The dimension of the inertial manifold for the equation~\eqref{eq:AbstractModel} is the dimension of the domain of $\Phi$. The upper bound follows from
    \begin{equation*}
        \dim \mathcal{M} = \dim \P \dL^2  + \dim \W^\In V = N + \Rank \W^\In
    \end{equation*}
    and the estimate~\eqref{est:DimensionalEstimateinProof}.
\end{proof}